\documentclass[reqno,10pt,centertags]{amsart} 
\usepackage{amsmath,amsthm,amscd,amssymb,latexsym,upref,enumerate}
\usepackage{bbm}
\usepackage{nicefrac}
\usepackage{hyperref} 
\newcommand*{\mailto}[1]{\href{mailto:#1}{\nolinkurl{#1}}}
\newcommand{\arxiv}[1]{\href{http://arxiv.org/abs/#1}{arXiv:#1}}


\newcommand{\bbN}{{\mathbb{N}}}
\newcommand{\bbR}{{\mathbb{R}}}

\newcommand{\bbC}{{\mathbb{C}}}

\newcommand{\cB}{{\mathcal B}}

\newcommand{\cH}{{\mathcal H}}

\newcommand{\cN}{{\mathcal N}}

\newcommand{\cW}{{\mathcal W}}
\newcommand{\cX}{{\mathcal X}}

\newcommand{\no}{\notag}
\newcommand{\lb}{\label}

\newcommand{\ol}{\overline}

\newcommand{\wti}{\widetilde}

\newcommand{\Oh}{O}

\newcommand{\f}{\frac}
\newcommand{\bi}{\bibitem}
\newcommand{\hatt}{\widehat}


\renewcommand{\Re}{\mathop\mathrm{Re}}
\renewcommand{\Im}{\mathop\mathrm{Im}}
\renewcommand{\ge}{\geqslant}

\DeclareMathOperator{\dom}{dom}
\DeclareMathOperator{\ran}{ran}

\DeclareMathOperator*{\slim}{s-lim}

\newcommand{\norm}[1]{\left\Vert#1\right\Vert}

\newcommand{\dott}{\,\cdot\,}

\renewcommand{\dotplus}{\overset{\textbf{\Large.}} +}

\allowdisplaybreaks \numberwithin{equation}{section}

\newtheorem{theorem}{Theorem}[section]
\newtheorem{proposition}[theorem]{Proposition}
\newtheorem{lemma}[theorem]{Lemma}
\newtheorem{corollary}[theorem]{Corollary}

\newtheorem{hypothesis}[theorem]{Hypothesis}

\theoremstyle{remark}
\newtheorem{remark}[theorem]{Remark}

\begin{document}

\title{Some Remarks On Krein--von Neumann Extensions}

\author[F.\ Gesztesy]{Fritz Gesztesy}
\address{Department of Mathematics, 
Baylor University, Sid Richardson Bldg., 1410 S.\,4th Street, Waco, TX 76706, USA}
\email{\mailto{Fritz\_Gesztesy@baylor.edu}}
\urladdr{\url{https://math.artsandsciences.baylor.edu/person/fritz-gesztesy-phd}}

\author[S.\ Sukhtaiev]{Selim Sukhtaiev}
\address{Department of Mathematics and Statistics, Auburn University, Auburn, 
AL 36849, USA}
\email{\mailto{szs0266@auburn.edu}}
\urladdr{\url{https://www.auburn.edu/cosam/faculty/math_stats/sukhtaiev/index.htm}}

\thanks{AMP Conference Proceedings, T.\ Aktosun, S.\ Avdonin, R.\ Weder, and V.\ Yurko (eds.), De Gruyter (to appear).}
\date{\today}
\subjclass[2010]{Primary 47A05, 47A75; Secondary 47A10.}
\keywords{Friedrichs extension, Krein--von Neumann extension, spectral theory.}

\begin{abstract}
We survey various properties of Krein--von Neumann extensions $S_K$ and the reduced Krein--von Neumann operator $\hatt S_K$ in connection with a strictly positive (symmetric) operator $S$ with nonzero deficiency indices. In particular, we focus on the resolvents of $S_K$ and $\hatt S_K$ and of the trace ideal properties of the resolvent of $\hatt S_K$, and make some comparisons with the corresponding properties of the resolvent of the Friedrichs extension $S_F$. We also recall a parametrization of all nonnegative self-adjoint extensions of $S$ and various Krein-type resolvent formulas for any two relatively prime self-adjoint extensions of $S$, utilizing a Donoghue-type $M$-operator (i.e., an energy parameter dependent Dirichlet-to-Neumann-type map). 
\end{abstract}

\maketitle

{\scriptsize \tableofcontents}

\section{Introduction}   \lb{s1}

This paper represents a survey of certain properties of Krein--von Neumann extensions somewhat alternative to those presented in \cite{AGLMS17}, \cite{AGMST10}, \cite{AGMST13}, \cite{FGKLNS22} that focuses on properties of resolvents of the (reduced) Krein--von Neumann and Friedrichs extension of strictly positive operators with nonzero deficiency indices. We emphasize that large parts of this paper are primarily of an expository nature. 

Given a strictly positive (symmetric) operator $S$ in the complex, separable Hilbert space $\cH$, that is, for some $\varepsilon \in (0,\infty)$, $S \geq \varepsilon I_{\cH}$ with deficiency indices $(r,r)$, $r \in \bbN \cup \{\infty\}$, we focus on some properties and interrelations of its two extremal nonnegative self-adjoint extensions, the Friedrichs extension $S_F$, and the Krein--von Neumann extensions $S_K$. In addition, with $P_{\ker(S_K)}$ the orthogonal projection onto the null space of $S_K$, respectively, $S^*$ , since actually, 
\begin{equation} 
P_{\ker(S_K)} = P_{\ker(S^*)},     \lb{1.1} 
\end{equation} 
we study the {\it reduced Krein--von Neumann operator} $\hatt S_K$ in $\hatt \cH$, defined by 
\begin{align}
\hatt S_K:&=S_K|_{[\ker(S_K)]^{\bot}}    \lb{1.2} \\
\begin{split}
& = S_K[I_{\cH} - P_{\ker(S_K)}]  \, \text{ in $[I_{\cH} - P_{\ker(S_K)}]\cH$}    \lb{1.3} \\
&= [I_{\cH} - P_{\ker(S_K)}]S_K[I_{\cH} - P_{\ker(S_K)}] 
\, \text{ in $[I_{\cH} - P_{\ker(S_K)}]\cH$},  
\end{split}
\end{align} 
where 
\begin{equation}
\hatt \cH = [\ker (S^*)]^{\bot} = [I_{\cH} - P_{\ker(S^*)}] \cH 
= [I_{\cH} - P_{\ker(S_K)}] \cH = [\ker (S_K)]^{\bot}.    \lb{1.4}
\end{equation}

In Section \ref{s2} we first discuss the basic facts underlying $S_F$, $S_K$, and $\hatt S_K$, and then also recall a parametrization of all nonnegative self-adjoint extensions $S_{B,\cW}$ of $S$ in terms of nonnegative, self-adjoint operators $B \geq 0$ in $\cW$, $B: \dom(B) \to \cW$, $\dom(B) \subseteq \cW$, where $\cW$ is a closed linear subspace of $\ker(S^*) = \ker(S_K)$. We conclude Section \ref{s2} by recalling the basic inequalities between the eigenvalues of $S_F$ and $\hatt S_K$. Various Krein-type resolvent formulas for a pair of relatively prime self-adjoint extensions of $S$ expressed in terms of an appropriate Donoghue-type $M$-operator (i.e., an energy parameter dependent Dirichlet-to-Neumann-type map and an operator-valued Nevanlinna--Herglotz function) are discussed in Section \ref{s3}.
Finally, in Section \ref{s4} we draw several comparisons between the abstract Friedrichs extension and the (reduced) Krein--von Neumann extension regarding trace ideal properties of their resolvents. For instance, Theorem \ref{t4.1} recalls the known result  
\begin{equation}
(S_F)^{-1} \in \cB_p(\cH) \, \text{ implies } \, \big(\hatt S_K\big)^{-1} \in \cB_p\big(\hatt \cH\big); \quad p \in (0,\infty],  \lb{1.5} 
\end{equation}
and Theorem \ref{t4.3} shows that for $p \in (0,\infty]$, the following items $(i)$--$(iv)$ are equivalent: \\[1mm]
$(i)$ \hspace*{0.5mm} $\big(\hatt S_K\big)^{-1} \in \cB_p\big(\hatt \cH\big)$. \\[1mm] 
$(ii)$ \hspace*{0.001mm} $[I_{\cH} - P_{\ker(S_K)}] (S_F)^{-1} [I_{\cH} - P_{\ker(S_K)}] \in \cB_p(\cH)$. \\[1mm] 
$(iii)$ $[I_{\cH} - P_{\ker(S_K)}] (S_F)^{-1/2} \in \cB_{2p}(\cH)$. \\[1mm] 
$(iv)$ \hspace*{0.00001mm}  $(S_F)^{-1/2} [I_{\cH} - P_{\ker(S_K)}] \in \cB_{2p}(\cH)$. 

Employing the decomposition of $\cH$ into 
\begin{equation}
\cH = \ker(S_K) \oplus \hatt \cH = \ker(S^*) \oplus \hatt \cH 
\end{equation}
(cf.\ \eqref{1.4}), $(S_F)^{-1}$ decomposes accordingly into the $2 \times 2$ block operator matrix 
\begin{align}
& (S_F)^{-1}      \lb{1.6} \\
& \quad = \begin{pmatrix} P_{\ker(S_K)} (S_F)^{-1} P_{\ker(S_K)} & 
P_{\ker(S_K)} (S_F)^{-1} [I_{\cH} - P_{\ker(S_K)}] \\ 
[I_{\cH} - P_{\ker(S_K)}] (S_F)^{-1} P_{\ker(S_K)} & 
[I_{\cH} - P_{\ker(S_K)}] (S_F)^{-1} [I_{\cH} - P_{\ker(S_K)}] 
\end{pmatrix}.    \no
\end{align}
Thus, assuming Hypothesis \ref{h2.7} and $\big(\hatt S_K\big)^{-1} \in \cB_p\big(\hatt \cH\big)$, three of the four entries in the $2 \times 2$ block operator matrix \eqref{1.6} lie in $\cB_p(\cH)$, the lone exception being the $(1,1)$-entry, 
$P_{\ker(S_K)} (S_F)^{-1} P_{\ker(S_K)}$ in \eqref{1.6}. In particular, the converse to assertion \eqref{1.5} does not hold, in other words, the hypothesis 
\begin{align} 
& \big(\hatt S_K\big)^{-1} \in \cB_p\big(\hatt \cH\big),    \no \\ 
& \quad \text{equivalently, } \\ 
& [I_{\cH} - P_{\ker(S_K)}] (S_K)^{-1} [I_{\cH} - P_{\ker(S_K)}] \in \cB_p(\cH); \quad p \in (0,\infty],    \no 
\end{align} 
cannot make any assertions about trace ideal properties of the remaining $(1,1)$-entry $P_{\ker(S_F)} (S_K)^{-1} P_{\ker(S_K)}$ in \eqref{1.6}, a fact discussed in detail by somewhat different means in \cite{HMD04}, \cite{Ma92a}, \cite{Ma23}, \cite{Ma23a}, and \cite{Ma24}.  

Finally, we briefly elaborate on the notational conventions used
throughout this paper: Let $\cH$ be a separable complex Hilbert space, $(\cdot,\cdot)_{\cH}$ the scalar product in $\cH$ (linear in
the second factor), and $I_{\cH}$ the identity operator in $\cH$.
Next, let $T$ be a linear operator mapping (a linear subspace of) a complex, separable 
Hilbert space into another, with $\dom(T)$ and $\ran(T)$ denoting the
domain and range of $T$. The closure of a closable operator $S$ is
denoted by $\ol S$. The kernel (i.e., null space) of $T$ is denoted by
$\ker(T)$. The spectrum and resolvent set of a closed linear operator in $\cH$ will be denoted by $\sigma(\cdot)$ and $\rho(\cdot)$, respectively (the essential spectrum for self-adjoint operators is abbreviated by $\sigma_{ess}(\dott)$). The
Banach spaces of bounded and compact linear operators in $\cH$ are
denoted by $\cB(\cH)$ and $\cB_\infty(\cH)$ (and analogously by $\cB(\cH_1,\cH_2)$ and $\cB_\infty(\cH_1,\cH_2)$ in a two-Hilbert space context). Similarly,
the $\ell^p$-based Schatten--von Neumann (trace) ideals (cf.\ \cite[Chs.~1, 2]{Si05} will be denoted
by $\cB_p(\cH)$, $p\in (0,\infty)$. 

The symbol $\dotplus$ abbreviates the direct (not necessarily orthogonal direct) sum of linear subspaces in $\cH$. Finally, $\cX_1 \hookrightarrow \cX_2$ denotes the continuous embedding of the normed space $\cX_1$ into the normed space $\cX_2$.

\section{On the Abstract Friedrichs and Krein--von Neumann Extension}   \lb{s2}

In this section we recall various facts on the abstract Krein--von Neumann extension. 

To set the stage, a linear operator $S:\dom(S)\subseteq\cH\to\cH$, is called {\it symmetric}, if
\begin{equation}\lb{Pos-2}
(u,Sv)_\cH=(Su,v)_\cH, \quad u,v\in \dom (S).
\end{equation}
If $\dom(S)=\cH$, the classical Hellinger--Toeplitz theorem guarantees that $S\in\cB(\cH)$, in which situation $S$ is readily seen to be self-adjoint. In general, however, symmetry is a considerably weaker property than self-adjointness and a classical problem in functional analysis is that of determining all self-adjoint extensions of a given unbounded symmetric operator of equal and nonzero deficiency indices. In this manuscript we will be particularly interested in this question within the class of 
densely defined \big(i.e., $\ol{\dom(S)}=\cH$\big), non-negative operators (in fact, in most instances $S$ will even turn out to be strictly positive) and we focus almost exclusively on self-adjoint extensions that are non-negative operators.  

We recall that a linear operator $S:\dom(S)\subseteq\cH\to \cH$
is called {\it non-negative} provided
\begin{equation}\lb{Pos-1}
(u,Su)_\cH\geq 0, \quad u\in \dom(S).
\end{equation}
(In particular, $S$ is symmetric in this case.) $S$ is called {\it strictly positive}, if for some 
$\varepsilon >0$, $(u,Su)_\cH\geq \varepsilon \|u\|_{\cH}^2$, $u\in \dom(S)$. 
Next, we recall that $A \leq B$ for two self-adjoint operators in $\cH$ if 
\begin{align}
\begin{split}
& \dom(|A|^{1/2}) \supseteq \dom(|B|^{1/2}) \, \text{ and } \\ 
& (|A|^{1/2}u, U_A |A|^{1/2}u)_{\cH} \leq (|B|^{1/2}u, U_B |B|^{1/2}u)_{\cH}, \;  
u \in \dom(|B|^{1/2}),      \lb{AleqB} 
\end{split}
\end{align}
where $U_C$ denotes the partial isometry in $\cH$ in the polar decomposition of 
a densely defined closed operator $C$ in $\cH$, $C=U_C |C|$, $|C|=(C^* C)^{1/2}$. (If,  
in addition, $C$ is self-adjoint, then $U_C$ and $|C|$ commute.) 
We also recall (see, e.g., \cite[Section\ I.6]{Fa75}, \cite[Theorem\ VI.2.21]{Ka80}) that if $A$ and $B$ are both 
self-adjoint and nonnegative in $\cH$, then 
\begin{equation}
0 \leq A\leq B  \, \text{ if and only if } \, (B + a I_\cH)^{-1} \leq (A + a I_\cH)^{-1} 
\, \text{ for all $a>0$,}     \lb{PPa-1} 
\end{equation}
(which implies $0 \leq A^{1/2 }\leq B^{1/2}$) and
\begin{equation}
\ker(A) =\ker\big(A^{1/2}\big)
\end{equation}
(with $C^{1/2}$ the unique nonnegative square root of a nonnegative self-adjoint operator $C$ in $\cH$).

For simplicity we will always adhere in the following to the conventions that $S$ is a linear, unbounded, densely defined, nonnegative (i.e., $S\geq 0$) operator in $\cH$, and that $S$ has nonzero deficiency indices, that is,
\begin{equation}
{\rm def} (S) = \dim (\ker(S^*-z I_{\cH})) \in \bbN\cup\{\infty\}, 
\quad z\in \bbC\backslash [0,\infty). 
\lb{DEF}
\end{equation}
Moreover, since $S$ and its closure $\ol{S}$ have the same self-adjoint extensions in $\cH$, we will without loss of generality assume that $S$ is closed in the remainder of this paper.

\begin{hypothesis} \lb{h2.1}
Assume that $S$ is a densely defined, closed, nonnegative operator in $\cH$ with deficiency indices $(r,r)$, $r \in \bbN \cup \{\infty\}$.
\end{hypothesis}

The following is a fundamental result to be found in M.\ Krein's celebrated 1947 paper
 \cite{Kr47} (see also the English summary on pages 491--495): 

\begin{theorem}\lb{t2.2}
Assume Hypothesis \ref{h2.1}. Then, among all non-negative self-adjoint 
extensions of $S$, there exist two distinguished ones, $S_K$ and $S_F$, which are, respectively, the smallest and largest
$($in the sense of order between linear operators, cf.\ \eqref{PPa-1}$)$ such extensions. Furthermore, any non-negative self-adjoint extension $\widetilde{S}$ of $S$ necessarily satisfies
\begin{equation}\lb{Fr-Sa}
S_K\leq\widetilde{S}\leq S_F.
\end{equation}
In particular, \eqref{Fr-Sa} determines $S_K$ and $S_F$ uniquely. \\
In addition,  if $S\geq \varepsilon I_{\cH}$ for some $\varepsilon >0$, one has 
$S_F \geq \varepsilon I_{\cH}$, and  
\begin{align}
\dom (S_F) &= \dom (S) \dotplus (S_F)^{-1} \ker (S^*),     \lb{SF}  \\
\dom (S_K) & = \dom (S) \dotplus \ker (S^*),    \lb{SK}   \\
\dom (S^*) & = \dom (S) \dotplus (S_F)^{-1} \ker (S^*) \dotplus \ker (S^*)  \no \\
& = \dom (S_F) \dotplus \ker (S^*),    \lb{S*} 
\end{align}
in particular, 
\begin{equation} \lb{Fr-4Tf}
\ker(S_K)= \ker\big((S_K)^{1/2}\big)= \ker(S^*) = \ran(S)^{\bot}.
\end{equation} 
\end{theorem}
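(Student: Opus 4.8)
The plan is to build $S_F$ by the Friedrichs quadratic-form procedure, to build $S_K$ by directly prescribing its domain as in \eqref{SK}, to verify by hand that the latter is a nonnegative self-adjoint extension of $S$, and then to establish minimality of $S_K$ by a comparison of closed forms; once this is done, \eqref{Fr-Sa} and uniqueness are formal, while \eqref{SF}, \eqref{S*}, and \eqref{Fr-4Tf} fall out along the way. I carry out the argument assuming $S \ge \eps I_{\cH}$, the general nonnegative case of the existence-and-ordering assertion requiring only minor additional care (e.g., Krein's original reciprocal/parallel-sum argument). For the Friedrichs extension, consider the closable nonnegative form $\mathfrak{s}(u,v) := (u, Sv)_{\cH}$, $u,v \in \dom(S)$, let $\cH_+ := \dom\big((S_F)^{1/2}\big)$ denote the completion of $\dom(S)$ in the norm $\big(\mathfrak{s}(u,u) + \|u\|_{\cH}^2\big)^{1/2}$, and let $S_F$ be the nonnegative self-adjoint operator representing the closure $\ol{\mathfrak{s}}$ (first representation theorem). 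Then $S \subseteq S_F$; the bound $\mathfrak{s}(u,u) \ge \eps \|u\|_{\cH}^2$ survives the closure, so $S_F \ge \eps I_{\cH}$ and $(S_F)^{-1} \in \cB(\cH)$. Maximality of $S_F$ is the soft point: any nonnegative self-adjoint extension $\widetilde S$ of $S$ carries a closed nonnegative form extending $\mathfrak{s}$, hence extending $\ol{\mathfrak{s}}$, and by the definition \eqref{AleqB} of operator order this is exactly the statement $\widetilde S \le S_F$. Since $S$ is closed and strictly positive, $\ran(S)$ is closed with $\ran(S) = \ker(S^*)^{\bot}$ and $S_F$ is boundedly invertible; then \eqref{SF} and \eqref{S*} are linear algebra — split $S_F h$ along $\cH = \ran(S) \oplus \ker(S^*)$ for the first, solve $S_F h_F = S^* h$ by surjectivity of $S_F$ for the second, with directness in each case following from $S_F \ge \eps I_{\cH}$.

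Next I define $S_K$ by \eqref{SK}: $\dom(S_K) := \dom(S) \dotplus \ker(S^*)$ — a direct sum because $\dom(S) \cap \ker(S^*) = \ker(S) = \{0\}$ — and $S_K(f+g) := Sf$. Short computations give $S \subseteq S_K \subseteq S^*$, symmetry of $S_K$, and $(S_K(f+g), f+g)_{\cH} = (Sf, f)_{\cH} \ge 0$; self-adjointness follows by showing $\dom(S_K^*) \subseteq \dom(S_K)$: given $h \in \dom(S_K^*)$, testing $(h, S_K(f+g))_{\cH} = (S_K^* h, f+g)_{\cH}$ against $g = 0$ puts $h \in \dom(S^*)$ with $S^* h = S_K^* h$, and then testing against $g \in \ker(S^*)$ forces $S^* h \perp \ker(S^*) = \ran(S)$, so $S^* h = Sf_0$ for some $f_0 \in \dom(S)$, whence $h - f_0 \in \ker(S^*)$ and $h \in \dom(S_K)$. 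This also yields \eqref{Fr-4Tf}, upon combining $S_K(f+g) = Sf$ with the general identity $\ker(C) = \ker\big(C^{1/2}\big)$ for nonnegative self-adjoint $C$. Passing to closures, one reads off that the closed form $\mathfrak{t}_K$ of $S_K$ has domain $\cH_+ \dotplus \ker(S^*)$ with $\mathfrak{t}_K(f+g, f+g) = \mathfrak{t}_F(f,f)$, where $\mathfrak{t}_F$ denotes the closed form of $S_F$, $f \in \cH_+$, $g \in \ker(S^*)$.

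The crux is minimality of $S_K$. Let $\widetilde S \ge 0$ be an arbitrary self-adjoint extension of $S$; then $\widetilde S \subseteq S^*$, so by \eqref{S*} every $u \in \dom(\widetilde S)$ decomposes as $u = u_F + u_0$ with $u_F \in \dom(S_F)$, $u_0 \in \ker(S^*)$, and $\widetilde S u = S^* u = S_F u_F$. For every $w \in \dom(S) \subseteq \dom(\widetilde S)$, using $Sw \in \ran(S) \perp \ker(S^*)$,
\begin{equation*}
0 \le \big(\widetilde S(u+w), u+w\big)_{\cH} = \mathfrak{t}_F(u_F + w, u_F + w) + (S_F u_F, u_0)_{\cH},
\end{equation*}
and since $\dom(S)$ is $\mathfrak{t}_F$-dense in $\cH_+$, letting $w \to -u_F$ forces $(S_F u_F, u_0)_{\cH} \ge 0$; hence $\mathfrak{t}_K(u,u) = \mathfrak{t}_F(u_F, u_F) \le \mathfrak{t}_F(u_F, u_F) + (S_F u_F, u_0)_{\cH} = \big(\widetilde S u, u\big)_{\cH}$ on the form core $\dom(\widetilde S)$. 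A routine closure argument — the $\mathfrak{t}_K$-norm is dominated by the form norm of $\widetilde S$ on $\dom(\widetilde S)$, and $\cH_+ \dotplus \ker(S^*)$ is complete in the $\mathfrak{t}_K$-norm — then gives $\dom\big((\widetilde S)^{1/2}\big) \subseteq \dom\big((S_K)^{1/2}\big)$ with $\mathfrak{t}_K$ dominated by the closed form of $\widetilde S$ there, i.e., $S_K \le \widetilde S$ by \eqref{AleqB}. Together with $S_K \le S_F$ (maximality of $S_F$) this establishes \eqref{Fr-Sa} for all nonnegative self-adjoint extensions, and antisymmetry of the order relation (via \eqref{PPa-1}) makes $S_K$ and $S_F$ unique. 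The only genuine work lies in this last step — identifying $\mathfrak{t}_K$ and running the closure argument cleanly; everything preceding it is a direct verification.
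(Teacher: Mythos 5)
The paper offers no proof of Theorem \ref{t2.2}: it is quoted as Krein's classical result, with \cite{Kr47} (and \cite{AS80}) as the references, so there is no in-text argument to compare yours against. What you have written is a self-contained rendition of the standard Birman--Krein--Vishik/quadratic-form approach (essentially the route of Alonso--Simon \cite{AS80} and of \cite[Ch.~13]{Sc12}), and in the strictly positive case it checks out: the construction of $S_F$ via the closed form and its maximality, the verification that $\dom(S)\dotplus\ker(S^*)$ with $S_K = S^*|_{\dom(S)\dotplus\ker(S^*)}$ is a nonnegative self-adjoint extension, the decompositions \eqref{SF}--\eqref{S*} and \eqref{Fr-4Tf}, the identification of the form of $S_K$ (which reappears in the paper as \eqref{SKform1}--\eqref{SKform2}), and the minimality argument via $0 \le (S_F u_F, u_0)_{\cH}$ followed by form closure are all correct. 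The last two steps are the genuinely nontrivial ones and you execute them properly.

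The one real gap is your treatment of the general case. Hypothesis \ref{h2.1} assumes only $S \ge 0$, and the existence/extremality statement \eqref{Fr-Sa} is asserted at that level of generality; your proof covers only $S \ge \varepsilon I_{\cH}$, and the reduction to that case is not ``minor additional care.'' When $S$ is merely nonnegative, $\ran(S)$ need not be closed, $\ker(S)=\dom(S)\cap\ker(S^*)$ need not be trivial, and \eqref{SK} fails even as a definition of $S_K$. Nor can one shift: the paper itself points out after \eqref{Fr-c} that the analog of $(S+cI_{\cH})_F = S_F + cI_{\cH}$ fails for the Krein--von Neumann extension, so the strictly positive case does not bootstrap to the nonnegative one. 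Handling $S\ge 0$ requires a genuinely different device (Krein's fractional-linear transform to contractions, or the Ando--Nishio form characterization \eqref{an-T1}). Since the paper only ever invokes the domain formulas under Hypothesis \ref{h2.7}, restricting to $S\ge\varepsilon I_{\cH}$ is defensible, but it should be announced as a restriction of what is being proved rather than waved off as routine.
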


Here the operator inequalities in \eqref{Fr-Sa} are understood in the sense of \eqref{AleqB} and hence they can  equivalently be written as
\begin{equation}
(S_F + a I_{\cH})^{-1} \leq \big(\wti S + a I_{\cH}\big)^{-1} \leq (S_K + a I_{\cH})^{-1} 
\, \text{ for all $a > 0$.}    \lb{Res}
\end{equation}

We will call the operator $S_K$ the {\it Krein--von Neumann extension}
of $S$. See \cite{Kr47} and also the discussion in \cite{AS80}. It should be
noted that the Krein--von Neumann extension was first considered by von Neumann 
\cite{Ne29} in 1929 in the case where $S$ is strictly bounded from below, that is, if 
$S \geq \varepsilon I_{\cH}$ for some $\varepsilon >0$. (His construction appears in the proof of Theorem 42 on pages 102--103.) However, von Neumann did not isolate the extremal property of this extension as described in \eqref{Fr-Sa} and \eqref{Res}. M.\ Krein \cite{Kr47}, \cite{Kr47a} was the first to systematically treat the general case $S\geq 0$ and to study all nonnegative self-adjoint extensions of $S$, illustrating the special role of the {\it Friedrichs extension} (i.e., the ``hard'' extension) $S_F$ of $S$ and the Krein--von Neumann extension (i.e., the ``soft'' extension)  $S_K$ of $S$ as extremal cases when considering all nonnegative extensions of $S$. 

For classical references on the subject of self-adjoint extensions of semibounded operators (not necessarily restricted to the Krein--von Neumann extension) we refer to Birman \cite{Bi56}, \cite{Bi08}, Friedrichs \cite{Fr34}, Freudenthal \cite{Fr36}, Krein \cite{Kr47a}, {\u S}traus \cite{St73}, and Vi{\u s}ik \cite{Vi63} (see also the monographs \cite[Sect. 109]{AG81a} and \cite[Part III]{Fa75}). For more recent references in this context, see the end of this section.  

An intrinsic description of the Friedrichs extension $S_F$ of $S\geq 0$, due to Freudenthal \cite{Fr36} in 1936, is that the unbounded operator $S_F:\dom(S_F)\subset\cH\to\cH$ is given by   
\begin{align}
& S_F u =S^*u,   \no \\
& u \in \dom(S_F) = \big\{v\in\dom(S^*)\,\big|\,  \mbox{there exists} \, 
\{v_j\}_{j\in\bbN}\subset \dom(S),    \lb{Fr-2} \\
& \quad \mbox{with} \, \lim_{j\to\infty}\|v_j-v\|_{\cH}=0  
\mbox{ and } ((v_j-v_k),S(v_j-v_k))_\cH\to 0 \mbox{ as }  j,k\to\infty\big\}.   \no 
\end{align}
Then, as is well-known, $S_F$ is non-negative, 
\begin{align}
& S_F \geq 0, \quad S\subseteq S_F=S_F^*\subseteq S^*,  \lb{Fr-4}  \\
& \dom\big((S_F)^{1/2}\big)=\big\{v\in\cH\,\big|\, \mbox{there exists} \, 
\{v_j\}_{j\in\bbN}\subset \dom(S),   \lb{Fr-4J} \\
& \quad \mbox{with} \lim_{j\to\infty}\|v_j-v\|_{\cH}=0  
\mbox{ and } ((v_j-v_k),S(v_j-v_k))_\cH\to 0\mbox{ as }
j,k\to\infty\big\},  \no 
\end{align}
and
\begin{equation}\lb{Fr-4H}
S_F=S^*|_{\dom(S^*)\cap\dom((S_{F})^{1/2})}.
\end{equation}

Equations \eqref{Fr-4J} and \eqref{Fr-4H} are intimately related to the definition of $S_F$ via (the closure of) the sesquilinear form generated by $S$ as follows: One introduces the sesquilinear form
\begin{equation}
q_S(f,g)=(f,Sg)_{\cH}, \quad f, g \in \dom(q_S)=\dom(S). 
\end{equation}
Since $S\geq 0$, the form $q_S$ is closable and we denote by $Q_S$ the closure of 
$q_S$. Then $Q_S\geq 0$ is densely defined and closed. By the first and second representation theorem for forms (cf., e.g., \cite[Sect.\ 6.2]{Ka80}), $Q_S$ is uniquely associated with a nonnegative, self-adjoint operator in $\cH$. This operator is precisely the Friedrichs extension, $S_F \geq 0$, of $S$, and hence,
\begin{align}
& Q_S(f,g)=(f,S_F g)_{\cH}, \quad f \in \dom(Q_S), \, g \in \dom(S_F),  \lb{Fr-Q} \\ 
& \dom(Q_S) = \dom\big(S_F^{1/2}\big).    \lb{Fr-QS} 
\end{align}

An intrinsic description of the Krein--von Neumann extension $S_K$ of $S\geq 0$ has been given by Ando and Nishio \cite{AN70} in 1970, where $S_K$ has been characterized as the unbounded operator $S_K:\dom(S_K)\subset\cH\to\cH$ given by
\begin{align} 
& S_Ku = S^*u,   \no \\
& u \in \dom(S_K) = \big\{v\in\dom(S^*)\,\big|\,\mbox{there exists} \, 
\{v_j\}_{j\in\bbN}\subset \dom(S),    \lb{Fr-2X}  \\ 
& \quad \mbox{with} \, \lim_{j\to\infty} \|Sv_j-S^*v\|_{\cH}=0  
\mbox{ and } ((v_j-v_k),S(v_j-v_k)_\cH\to 0 \mbox{ as } j,k\to\infty\big\}.  \no
\end{align}

One observes that by \eqref{Fr-2} one has that shifting $S$ by a constant commutes with the operation of taking the Friedrichs extension of $S$, that is, for any 
$c\in\bbR$,
\begin{equation}
(S + c I_{\cH})_{F} = S_F + c I_{\cH},    \lb{Fr-c}
\end{equation}
but by \eqref{Fr-2X}, the analog of \eqref{Fr-c} for the Krein--von Neumann extension 
$S_K$ fails.

At this point we recall a result due to Makarov and Tsekanovskii \cite{MT07}, concerning symmetries (e.g., a rotational symmetry) shared by $S$, $S^*$, $S_F$, and $S_K$ (see also \cite{HK09}). More precisely, we assume that the symmetry is represented by a unitary operator $U$ in $\cH$ commuting with $S$, that is,
\begin{equation} 
 U S U^{-1} = S, \, 
 \text{ or equivalently, } \, U \dom(S) \subseteq \dom(S) \, \text{ and } \, U S \subseteq SU.   
\end{equation}
(Incidentally, we note that this implies $U \dom(S) = \dom(S)$.)

\begin{proposition}  \lb{p2.3}
Let $U$ be unitary in $\cH$ and assume $S$ to be a densely defined, closed, nonnegative operator in $\cH$ that commutes with $U$, that is, 
\begin{align}
U S U^{-1} &= S.  \lb{US}
\intertext{Then also $S^*$, $S_F$, and $S_K$ commute with $U$, that is,}
U S^* U^{-1} &= S^*,    \lb{US*} \\
U S_F U^{-1} &= S_F,  \lb{USF}\\
U S_K U^{-1} &= S_K.  \lb{USK} 
\end{align}
\end{proposition}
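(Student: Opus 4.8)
The plan is to establish \eqref{US*} by a direct adjoint computation, and then to deduce \eqref{USF} and \eqref{USK} from the extremal characterization of $S_F$ and $S_K$ in Theorem \ref{t2.2} (with a second, self-contained argument via the intrinsic descriptions \eqref{Fr-2} and \eqref{Fr-2X} as an alternative).

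First, for \eqref{US*}: since $U$ is unitary, $U^{-1} = U^*$, and \eqref{US} asserts that $U S U^{-1}$ and $S$ coincide as operators (in particular $U\dom(S) = \dom(S)$). For any densely defined operator $A$ and unitary $U$ one has $(U A U^{-1})^* = U A^* U^{-1}$, with the accompanying domain identity $\dom((UAU^{-1})^*) = U\dom(A^*)$; this is verified in one line by rewriting $(UAx, g)_{\cH} = (Vx, h)_{\cH}$ as $(Ax, U^{-1}g)_{\cH} = (x, U^{-1}h)_{\cH}$. Taking adjoints of both sides of \eqref{US} therefore gives $U S^* U^{-1} = S^*$, which is \eqref{US*} (and $U\dom(S^*) = \dom(S^*)$).

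Next, I observe that conjugation by $U$ maps the family $\cN$ of all nonnegative self-adjoint extensions of $S$ bijectively onto itself and preserves the order $\leq$ of \eqref{AleqB}. Indeed, if $T \in \cN$ then $U T U^{-1}$ is self-adjoint (by the adjoint formula above), it is nonnegative since $(f, UTU^{-1}f)_{\cH} = (U^{-1}f, T U^{-1}f)_{\cH} \geq 0$ for $f \in \dom(UTU^{-1}) = U\dom(T)$, and it extends $S$ since $S \subseteq T$ forces $S = USU^{-1} \subseteq UTU^{-1}$; thus $UTU^{-1} \in \cN$, with inverse map $U^{-1}(\dott)U$. Moreover $UC^{1/2}U^{-1}$ is the nonnegative square root of $UCU^{-1}$ for every nonnegative self-adjoint $C$, so $A \leq B$ transfers verbatim to $UAU^{-1} \leq UBU^{-1}$. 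Hence $U S_K U^{-1}$ is again the smallest and $U S_F U^{-1}$ again the largest element of $(\cN, \leq)$: for arbitrary $T \in \cN$ apply the bracketing $S_K \leq U^{-1}T U \leq S_F$ of \eqref{Fr-Sa} to $U^{-1}TU \in \cN$ and conjugate back by $U$ to get $U S_K U^{-1} \leq T \leq U S_F U^{-1}$. By the uniqueness statement in Theorem \ref{t2.2} this yields $U S_K U^{-1} = S_K$ and $U S_F U^{-1} = S_F$, i.e.\ \eqref{USK} and \eqref{USF}. Alternatively, one argues directly from \eqref{Fr-2} and \eqref{Fr-2X}: if $v \in \dom(S_F)$ with approximating sequence $\{v_j\}_{j\in\bbN} \subset \dom(S)$ as in \eqref{Fr-2}, then $\{Uv_j\}_{j\in\bbN} \subset \dom(S)$, $Uv_j \to Uv$, and $((Uv_j-Uv_k), S(Uv_j-Uv_k))_{\cH} = ((v_j-v_k), U^{-1}SU(v_j-v_k))_{\cH} \to 0$, while $Uv \in \dom(S^*)$ by \eqref{US*}, so $Uv \in \dom(S_F)$ and $S_F Uv = S^*Uv = US^*v = US_Fv$; the reverse inclusion comes from replacing $U$ by $U^{-1}$, and the Krein--von Neumann case is identical after replacing $\|v_j - v\|_{\cH} \to 0$ by $\|Sv_j - S^*v\|_{\cH} \to 0$ and using $\|S(Uv_j) - S^*(Uv)\|_{\cH} = \|Sv_j - S^*v\|_{\cH}$.

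I do not anticipate a genuine obstacle here; the only points demanding a little care are the domain bookkeeping when taking adjoints of conjugated operators in \eqref{US*}, and the verification that the operator order \eqref{AleqB} is preserved under unitary conjugation — both of which reduce to the routine facts $(UAU^{-1})^* = UA^*U^{-1}$ and $(UCU^{-1})^{1/2} = UC^{1/2}U^{-1}$ together with the isometry of $U$ on the relevant form domains.
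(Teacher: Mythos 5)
Your proposal is correct. Note, however, that the paper does not give a self-contained proof at all: it cites \cite[p.~73, 74]{We80} for \eqref{US*} and then observes that \eqref{USF} and \eqref{USK} ``immediately follow'' from \eqref{US*} together with the intrinsic characterizations \eqref{Fr-2} and \eqref{Fr-2X} --- which is precisely the argument you sketch as your \emph{alternative} at the end. Your primary route is genuinely different: you show that $T \mapsto UTU^{-1}$ is an order-preserving bijection of the set of nonnegative self-adjoint extensions of $S$ onto itself (using $(UAU^{-1})^* = UA^*U^{-1}$ and $(UCU^{-1})^{1/2} = UC^{1/2}U^{-1}$ to transfer the order \eqref{AleqB}), and then invoke the extremality and uniqueness clauses of Theorem \ref{t2.2}. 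That argument buys generality --- it works for any transformation of the extension family that preserves the order and fixes $S$, without ever touching the approximating sequences in \eqref{Fr-2}, \eqref{Fr-2X} --- at the cost of relying on Krein's Theorem \ref{t2.2} and the square-root/form-domain bookkeeping. The paper's route is more elementary and local: once \eqref{US*} is in hand, $U$ visibly maps the defining sequences of \eqref{Fr-2} (resp.\ \eqref{Fr-2X}) to sequences of the same type, exactly as in your final paragraph. Both are complete; the only steps that deserve the care you already flag are the domain identity $\dom\big((UAU^{-1})^*\big) = U\dom(A^*)$ and the fact that $U\dom(S) = \dom(S)$ (which the paper itself records after \eqref{US}), both of which you handle correctly.
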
 

In fact, Makarov and Tsekanovskii \cite{MT07} prove an extension of 
Proposition \ref{p2.3} where \eqref{US} is replaced by $U S U^{-1} = \mu S$, for some 
$\mu > 0$, and analogously in \eqref{US*}--\eqref{USK}. Here we just note that 
\eqref{US*} is a consequence of \cite[p.\ 73, 74]{We80}, and \eqref{USF}, respectively,  
\eqref{USK}, immediately follow from \eqref{US*} and \eqref{Fr-2}, respectively, 
 \eqref{Fr-2X}.
 
Similarly to Proposition \ref{p2.3}, the following results also immediately follows from the characterizations \eqref{Fr-2} and \eqref{Fr-2X} of $S_F$ and $S_K$, respectively:

\begin{proposition}  \lb{p2.4}  
Let $U\colon\cH_1\to\cH_2$ be unitary from $\cH_1$ onto $\cH_2$ and assume $S$ to be a densely defined, closed, nonnegative operator in $\cH_1$ with adjoint 
$S^*$, Friedrichs extension $S_F$, and Krein--von Neumann extension $S_K$ in 
$\cH_1$, respectively. Then the adjoint, Friedrichs extension, and Krein--von Neumann extension of the nonnegative, closed, densely defined, symmetric operator $USU^{-1}$ in $\cH_2$ are given by 
\begin{align}
[USU^{-1}]^* = US^*U^{-1},  \quad 
[USU^{-1}]_F = US_F U^{-1},  \quad 
[USU^{-1}]_K = US_K U^{-1}
\end{align}
in $\cH_2$, respectively.
\end{proposition}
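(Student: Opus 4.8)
The plan is to exploit that a surjective isometry transports every piece of structure in sight. Set $T := USU^{-1}$ with $\dom(T) := U\dom(S)$; since $U$ is unitary, $T$ is at once seen to be densely defined, closed, symmetric, and nonnegative in $\cH_2$, so that its Friedrichs and Krein--von Neumann extensions $T_F$, $T_K$ are defined. First I would establish the adjoint formula $T^* = US^* U^{-1}$: for $g \in \cH_2$ and $\phi \in \dom(S)$ one has $(T U\phi, g)_{\cH_2} = (US\phi, g)_{\cH_2} = (S\phi, U^{-1} g)_{\cH_1}$, so $g \in \dom(T^*)$ with $T^* g = h$ holds iff $(S\phi, U^{-1}g)_{\cH_1} = (\phi, U^{-1}h)_{\cH_1}$ for all $\phi \in \dom(S)$, i.e., iff $U^{-1}g \in \dom(S^*)$ and $S^* U^{-1} g = U^{-1} h$. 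This is precisely the assertion $\dom(T^*) = U\dom(S^*)$ together with $T^* = U S^* U^{-1}$, which is the first claim.

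Next, for the Friedrichs extension I would insert $T$ into the Freudenthal characterization \eqref{Fr-2}. Take $v \in \dom(T^*) = U\dom(S^*)$, write $v = Uw$ with $w \in \dom(S^*)$, and write any approximating sequence $\{v_j\}_{j \in \bbN} \subset \dom(T) = U\dom(S)$ as $v_j = U w_j$ with $w_j \in \dom(S)$ --- a bijective correspondence because $U$ maps $\dom(S)$ onto $\dom(T)$. Since $U$ is an isometric bijection, $\|v_j - v\|_{\cH_2} = \|w_j - w\|_{\cH_1}$ and $((v_j - v_k), T(v_j - v_k))_{\cH_2} = ((w_j - w_k), S(w_j - w_k))_{\cH_1}$, so the two conditions defining $v \in \dom(T_F)$ are literally the conditions defining $w = U^{-1}v \in \dom(S_F)$. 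Hence $\dom(T_F) = U\dom(S_F)$, and on this domain $T_F v = T^* v = US^* U^{-1} v = U S_F U^{-1} v$ (using $S_F \subseteq S^*$ from \eqref{Fr-4}). The Krein--von Neumann case is identical, now via the Ando--Nishio characterization \eqref{Fr-2X}: the only modified ingredient is $\|T v_j - T^* v\|_{\cH_2} = \|U S w_j - U S^* w\|_{\cH_2} = \|S w_j - S^* w\|_{\cH_1}$, which again matches the condition defining $\dom(S_K)$, yielding $\dom(T_K) = U\dom(S_K)$ and $T_K = US_K U^{-1}$.

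A cleaner, albeit less self-contained, route is to invoke Theorem \ref{t2.2} directly: $\wti S \mapsto U \wti S U^{-1}$ is a bijection from the set of nonnegative self-adjoint extensions of $S$ onto that of $T$, and it is order preserving, since $(U\wti S U^{-1} + a I_{\cH_2})^{-1} = U(\wti S + a I_{\cH_1})^{-1} U^{-1}$ for $a > 0$ combined with \eqref{PPa-1} shows that $0 \le A \le B$ in $\cH_1$ iff $0 \le UAU^{-1} \le UBU^{-1}$ in $\cH_2$. An order isomorphism carries the smallest (resp.\ largest) element to the smallest (resp.\ largest) element, so $T_K = US_K U^{-1}$ and $T_F = US_F U^{-1}$ follow at once. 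I would probably present the first (intrinsic) proof, as it is what the text advertises, and mention this second one as a remark.

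I do not expect a genuine obstacle: every step collapses to the fact that a surjective isometry intertwines inner products, domains, closures, and the extension relation. The single point that deserves care is the adjoint identity $T^* = US^* U^{-1}$, because both intrinsic descriptions \eqref{Fr-2} and \eqref{Fr-2X} quantify over $v \in \dom(T^*)$; once that identity and the bijection $\{v_j\} \leftrightarrow \{U^{-1} v_j\}$ of approximating sequences are in place, the remainder is bookkeeping. (Taking $\cH_1 = \cH_2$ and assuming in addition $USU^{-1} = S$ recovers Proposition \ref{p2.3}.)
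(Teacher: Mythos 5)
Your proposal is correct and follows exactly the route the paper indicates: the paper states that Proposition \ref{p2.4} ``immediately follows from the characterizations \eqref{Fr-2} and \eqref{Fr-2X}'' (with the adjoint identity handled as a standard fact), and your argument simply fills in the bookkeeping of transporting approximating sequences under the surjective isometry $U$. The alternative order-theoretic argument via Theorem \ref{t2.2} and \eqref{PPa-1} is a valid bonus but not what the paper does.
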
 

\begin{proposition} [see, e.g., \cite{MN12}, Corollary~3.10]  \lb{p2.5}  
Let $J\subseteq \bbN$ be some countable index set and consider $\cH = \bigoplus_{j\in J} \cH_j$ and $S=\bigoplus_{j\in J} S_j$, where each $S_j$ is a densely defined, closed, nonnegative operator in $\cH_j$, $j\in J$. Denoting by $(S_j)_F$ and $(S_j)_K$ the Friedrichs and Krein--von Neumann extension of $S_j$ in $\cH_j$, $j\in J$, one infers that 
\begin{equation}
S^*=\bigoplus_{j\in J} \; (S_j)^*, \quad S_F=\bigoplus_{j\in J} \; (S_j)_F, 
\quad S_K = \bigoplus_{j\in J} \; (S_j)_K. 
\end{equation}
\end{proposition}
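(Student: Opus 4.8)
The plan is to reduce the assertion to three independent claims about direct sums: first for the adjoint, then for the Friedrichs extension, and finally for the Krein--von Neumann extension, exploiting the fact (already recorded in the excerpt) that both $S_F$ and $S_K$ admit intrinsic characterizations, \eqref{Fr-2} and \eqref{Fr-2X} respectively, that only involve sequences in $\dom(S)$ together with convergence in $\cH$ and of the quadratic form $(\,\cdot\,,S\,\cdot\,)_{\cH}$.

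First I would establish $S^* = \bigoplus_{j \in J} (S_j)^*$. This is a standard fact about adjoints of orthogonal direct sums of densely defined operators: an element $v = (v_j)_{j \in J} \in \cH$ lies in $\dom(S^*)$ if and only if there exists $w = (w_j)_{j \in J} \in \cH$ with $(Su, v)_{\cH} = (u, w)_{\cH}$ for all $u \in \dom(S)$; testing against vectors $u$ supported in a single summand $\cH_j$ shows this is equivalent to $v_j \in \dom((S_j)^*)$ with $(S_j)^* v_j = w_j$ for each $j$, together with the square-summability $\sum_{j \in J} \|(S_j)^* v_j\|_{\cH_j}^2 < \infty$, which is exactly the statement that $v \in \dom(\bigoplus_j (S_j)^*)$. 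Care is needed with the domain of the direct sum operator, but this is routine.

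Next I would handle $S_F = \bigoplus_{j \in J} (S_j)_F$. One clean way is via the forms: since $q_S = \bigoplus_{j} q_{S_j}$ as a form sum, the closure $Q_S$ is the form direct sum of the $Q_{S_j}$, and the self-adjoint operator associated with a direct sum of forms is the direct sum of the associated operators; by \eqref{Fr-Q}--\eqref{Fr-QS} this operator is $S_F$ on one side and $\bigoplus_j (S_j)_F$ on the other. Alternatively, and perhaps more in the spirit of the surrounding text, one can argue directly from \eqref{Fr-2}: given $v = (v_j)_j \in \dom(S_F)$ with approximating sequence $\{v^{(k)}\}_{k}$, its components $v_j^{(k)}$ approximate $v_j$ in the sense of \eqref{Fr-2} (projections are form-bounded here), so $v_j \in \dom((S_j)_F)$; the converse requires assembling componentwise approximating sequences into a global one, and here one must control the tail $\sum_{j} (\,\cdot\,, S_j(\,\cdot\,))_{\cH_j}$ and pass to a diagonal subsequence so that both the $\cH$-convergence and the Cauchy-in-form condition hold simultaneously. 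Then $S_F u = S^* u$ on this domain by \eqref{Fr-4H} together with the already-proven formula for $S^*$, matching $\bigoplus_j (S_j)_F$.

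Finally, the Krein--von Neumann case $S_K = \bigoplus_j (S_j)_K$ follows the same template using \eqref{Fr-2X} in place of \eqref{Fr-2}: the approximating condition $\|S v^{(k)} - S^* v\|_{\cH} \to 0$ splits componentwise, and one reassembles as before. I expect the main obstacle to be precisely this reassembly step in the $S_F$ and $S_K$ directions — turning a countable family of single-summand approximating sequences into one global sequence that is simultaneously $\cH$-convergent and Cauchy in the relevant form — which is handled by a standard but slightly delicate diagonal/truncation argument (choosing, for each $j$, an index $k_j$ large enough and setting the $j$-th block to $v_j^{(k_j)}$ only after the $j$-th truncation level, so that errors summed over $j$ are controlled). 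Everything else is bookkeeping about orthogonal direct sums, and the cited reference \cite{MN12} can be invoked to shorten the write-up.
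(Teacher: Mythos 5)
Your proposal is correct and follows essentially the route the paper itself indicates: the paper supplies no argument for Proposition \ref{p2.5} beyond the remark that it (like Proposition \ref{p2.4}) follows immediately from the intrinsic characterizations \eqref{Fr-2} and \eqref{Fr-2X}, together with the citation of \cite{MN12}. One simplification worth noting: the diagonal/truncation reassembly you single out as the main obstacle is avoidable, because once the easy componentwise direction yields the inclusions $S_F\subseteq\bigoplus_{j\in J}(S_j)_F$ and $S_K\subseteq\bigoplus_{j\in J}(S_j)_K$, equality is automatic, since an inclusion between two self-adjoint operators forces them to coincide (and the direct sums on the right are manifestly nonnegative self-adjoint extensions of $S$).
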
 

The following is a consequence of a slightly more general result formulated
in \cite[Theorem 1]{AN70}: 

\begin{proposition}\lb{p2.6}
Assume Hypothesis \ref{h2.1}. 
Then $S_K$, the Krein--von Neumann extension of $S$, has the
property that
\begin{equation}\lb{an-T1}
\dom\big((S_K)^{1/2}\big)=\biggl\{u\in\cH\,\bigg|\,\sup_{v\in\dom(S)}
\frac{|(u,Sv)_\cH|^2}{(v,Sv)_\cH}
<+\infty\biggr\},
\end{equation}
and
\begin{equation}\lb{an-T2}
\big\|(S_K)^{1/2}u\big\|^2_\cH=\sup_{v\in\dom(S)}
\frac{|(u,Sv)_\cH|^2}{(v,Sv)_\cH}, \quad  u\in\dom\big((S_K)^{1/2}\big).  
\end{equation}
\end{proposition}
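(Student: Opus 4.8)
The plan is to identify the functional
$t_K(u):=\sup\big\{|(u,Sv)_\cH|^2/(v,Sv)_\cH \,\big|\, v\in\dom(S),\, (v,Sv)_\cH>0\big\}$
(with $t_K(u):=0$ when the set is empty) with $u\mapsto\big\|(S_K)^{1/2}u\big\|_\cH^2$, domains included, since that is exactly what \eqref{an-T1}--\eqref{an-T2} assert. First I would record the ``easy half'': for every nonnegative self-adjoint extension $\wti S$ of $S$ and every $u\in\dom\big(\wti S^{1/2}\big)$, each $v\in\dom(S)\subseteq\dom\big(\wti S\big)$ gives $(u,Sv)_\cH=\big(u,\wti S v\big)_\cH=\big(\wti S^{1/2}u,\wti S^{1/2}v\big)_\cH$, so by Cauchy--Schwarz $|(u,Sv)_\cH|^2\le\big\|\wti S^{1/2}u\big\|_\cH^2(v,Sv)_\cH$ and hence $t_K(u)\le\big\|\wti S^{1/2}u\big\|_\cH^2$; taking $\wti S=S_K$ yields $\dom\big((S_K)^{1/2}\big)\subseteq\dom(t_K)$ together with the inequality ``$\ge$'' in \eqref{an-T2}.

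For the reverse I would pass to the reduced operator. Put $\hatt u:=[I_\cH-P_{\ker(S^*)}]u$; then $(u,Sv)_\cH=(\hatt u,Sv)_{\hatt\cH}$ because $\ran(S)\perp\ker(S^*)$, so $t_K(u)=t_K(\hatt u)$, and since $\ker(S_K)=\ker(S^*)$ reduces $S_K$ one has $S_K=0\oplus\hatt S_K$ on $\cH=\ker(S^*)\oplus\hatt\cH$, so $(S_K)^{1/2}u=\big(\hatt S_K\big)^{1/2}\hatt u$. Thus it suffices to prove \eqref{an-T1}--\eqref{an-T2} for $\big(\hatt S_K,\hatt\cH\big)$. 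Using $\dom(S_K)=\dom(S)\dotplus\ker(S^*)$ (cf.\ \eqref{SK}) and $S_K=S^*|_{\dom(S_K)}$ (cf.\ \eqref{Fr-2X}) --- valid when $S$ is injective, in particular when $S\ge\eps I_\cH$ --- a short computation gives $\dom\big(\hatt S_K\big)=[I_\cH-P_{\ker(S^*)}]\dom(S)$ and $\hatt S_K\hatt v=Sv$ for $v\in\dom(S)$, where $\hatt v:=[I_\cH-P_{\ker(S^*)}]v$; consequently $\hatt S_K$ is injective, $\ran\big(\hatt S_K\big)=\ran(S)$, and $\big(\hatt v,\hatt S_K\hatt v\big)_{\hatt\cH}=(v,Sv)_\cH$, whence $\big\|\big(\hatt S_K\big)^{-1/2}(Sv)\big\|_{\hatt\cH}^2=(v,Sv)_\cH$ (and $\hatt S_K\ge\eps I_{\hatt\cH}$ if $S\ge\eps I_\cH$). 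Writing $B:=\big(\hatt S_K\big)^{1/2}$ and substituting, the claim reduces to the Hilbert-space identity
\begin{equation*}
\sup_{w\in\ran(S)\setminus\{0\}}\frac{|(\hatt u,w)_{\hatt\cH}|^2}{\big\|B^{-1}w\big\|_{\hatt\cH}^2}=
\begin{cases}
\|B\hatt u\|_{\hatt\cH}^2, & \hatt u\in\dom(B),\\
+\infty, & \hatt u\notin\dom(B).
\end{cases}
\end{equation*}

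To prove this I would parametrize $w=\hatt S_K\eta$, $\eta\in\dom\big(\hatt S_K\big)$, so that $B^{-1}w=B\eta$; if $\hatt u\in\dom(B)$ then $(\hatt u,w)_{\hatt\cH}=(B\hatt u,B\eta)_{\hatt\cH}$, the ratio is $\le\|B\hatt u\|_{\hatt\cH}^2$ by Cauchy--Schwarz, and equality is attained in the limit $\eta\to\hatt u$ along $\dom\big(\hatt S_K\big)$ (a core for $B$). Conversely, if the supremum equals $C<\infty$, then $|(\hatt u,B\zeta)_{\hatt\cH}|\le C^{1/2}\|\zeta\|_{\hatt\cH}$ for all $\zeta\in B\,\dom\big(\hatt S_K\big)$; that subspace is dense in $\hatt\cH$ and is a core for $B$ (truncate $B$ by its spectral projections $E_B([1/n,n])$, using $\ker(B)=\{0\}$), so the Riesz representation theorem furnishes $g\in\hatt\cH$ with $(\hatt u,B\zeta)_{\hatt\cH}=(g,\zeta)_{\hatt\cH}$ on it, which by continuity in the graph norm of $B$ extends to all $\zeta\in\dom(B)$, i.e.\ $\hatt u\in\dom(B^*)=\dom(B)$. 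Combined with the easy half this gives $t_K(u)=\|B\hatt u\|_{\hatt\cH}^2=\big\|(S_K)^{1/2}u\big\|_\cH^2$ and $\dom(t_K)=\dom\big((S_K)^{1/2}\big)$, as required.

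The step I expect to be the main obstacle is exactly this last one: $\zeta\mapsto(\hatt u,B\zeta)_{\hatt\cH}$ is a priori defined only on $B\,\dom\big(\hatt S_K\big)$ and is not $\hatt\cH$-continuous, so extending the representation identity to $\dom(B)$ genuinely uses the core property. When $S\ge\eps I_\cH$ everything collapses: $B^{-1}$ is bounded, $\ran(S)=\ran\big(\hatt S_K\big)=\hatt\cH$, and the supremum is simply $\sup_{\zeta\in\dom(B)\setminus\{0\}}|(\hatt u,B\zeta)_{\hatt\cH}|^2/\|\zeta\|_{\hatt\cH}^2$, handled by a single Riesz argument. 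For the full generality $S\ge 0$ one would first split off $\ker(S)\subseteq\ker(S^*)$ via Proposition~\ref{p2.5} to reduce to injective $S$ and then run the above; alternatively, one may simply invoke \cite[Thm.~1]{AN70}, of which the proposition is a special case.
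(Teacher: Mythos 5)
The paper offers no argument for Proposition \ref{p2.6} at all: it is quoted as a special case of \cite[Theorem~1]{AN70}, followed only by the remark on the $0/0$ convention. So any actual proof is by definition a different route, and yours is a sensible one. The ``easy half'' (Cauchy--Schwarz through $\big(\wti S^{1/2}u,\wti S^{1/2}v\big)_\cH$, giving $t_K(u)\le\big\|(S_K)^{1/2}u\big\|_\cH^2$) is correct in full generality, and the reduction to $\hatt S_K$ followed by the Riesz-representation/core argument for $B=\big(\hatt S_K\big)^{1/2}$ is a clean, self-contained proof whenever $S\ge\eps I_{\cH}$, or more generally whenever $\ran(S)$ is closed. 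Since every subsequent use of the proposition in the paper occurs under Hypothesis \ref{h2.7}, your argument does cover everything the paper actually needs.

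As a proof of the statement as given, however --- under Hypothesis \ref{h2.1}, i.e.\ for arbitrary $S\ge 0$ --- there is a genuine gap. Your computation of the reduced operator rests entirely on the von Neumann decomposition $\dom(S_K)=\dom(S)\dotplus\ker(S^*)$ of \eqref{SK}: it is what gives $\dom\big(\hatt S_K\big)=[I_{\cH}-P_{\ker(S^*)}]\dom(S)$ and $\ran\big(\hatt S_K\big)=\ran(S)$, and hence identifies the set over which your supremum runs, $B^{-1}\ran(S)=B\,[I_{\cH}-P_{\ker(S^*)}]\dom(S)$, with $B\dom(B^2)$ --- the set whose core property you then verify spectrally. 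But Theorem \ref{t2.2} asserts \eqref{SK} only under $S\ge\eps I_{\cH}$, and it genuinely fails for injective nonnegative $S$ with non-closed range: one always has $\dom(S)\dotplus\ker(S^*)\subseteq\dom(S_K)$, but the inclusion is strict whenever some $u\in\dom(S_K)$ has $S_Ku\in\ol{\ran(S)}\setminus\ran(S)$ (already the extreme case $\ker(S^*)=\{0\}$ with $S$ non-self-adjoint shows this). Your proposed repair --- split off $\ker(S)$ via Proposition \ref{p2.5} and ``run the above'' --- therefore does not close the gap, since injectivity does not yield closed range; in that regime $B^{-1}\ran(S)$ is a proper subspace of $B\dom(B^2)$ and you would still need to show it is a core for $B$ before boundedness of $\zeta\mapsto(\hatt u,B\zeta)_{\hatt\cH}$ on it implies $\hatt u\in\dom(B^*)$. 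The closing fallback ``invoke \cite[Theorem~1]{AN70}'' is not a repair but exactly the citation the paper itself relies on.
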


A word of explanation is in order here: Given $S\geq 0$ as in the statement of
Proposition \ref{p2.6}, the Cauchy--Schwarz-type inequality
\begin{equation}\lb{CS-I.1}
|(u,Sv)_{\cH}|^2\leq (u,Su)_{\cH} (v,Sv)_{\cH}, \quad u,v\in\dom(S), 
\end{equation}
shows (due to the fact that $\dom(S)\hookrightarrow\cH$ densely, equipping $\dom(S)$ with the graph norm induced by $S$) that
\begin{equation}\lb{CS-I.2}
u\in\dom(S) \,\mbox{ and } \, (u,S u)_{\cH} =0 \,
\text{ implies }\,Su=0.
\end{equation}
Thus, whenever the denominator of the fractions appearing
in \eqref{an-T1}, \eqref{an-T2} vanishes, so does the numerator, and
one interpretes $0/0$ as being zero in \eqref{an-T1}, \eqref{an-T2}.

We continue by recording an abstract result regarding the
parametrization of all non-negative self-adjoint extensions of a given
strictly positive, densely defined, symmetric operator. 

\begin{hypothesis} \lb{h2.7}
Suppose that $S$ is a densely defined, closed operator in $\cH$, for some $\varepsilon>0$, 
$S \geq \varepsilon I_{\cH}$, and $S$ has deficiency indices $(r,r)$, $r \in \bbN \cup \{\infty\}$. 
\end{hypothesis}

The following results are proved in \cite{AS80}, \cite[\S 15]{Fa75}, 
\cite{Gr68}, \cite{Gr70}, \cite{Ma10}, and, in the present form, the next theorem 
appeared in \cite{AS80}, \cite{Bi56}, \cite{GM11}, \cite{Ma92a}, \cite[Ch.~13]{Sc12}:

\begin{theorem}\lb{t2.8}
Assume Hypothesis \ref{h2.7}. Then there exists a one-to-one correspondence between non-negative self-adjoint operators 
$0 \leq B:\dom(B)\subseteq \cW\to \cW$, $\ol{\dom(B)}=\cW$, where $\cW$
is a closed, linear subspace of $\cN_0 = \ker(S^*)$, and non-negative self-adjoint
extensions $S_{B,\cW}\geq 0$ of $S$. More specifically, $S_F$ is boundedly invertible, 
$S_F\geq \varepsilon I_{\cH}$, 
and one has
\begin{align} 
& \dom(S_{B,\cW})  \no \\
& \quad =\big\{f + (S_F)^{-1}(Bw + \eta)+ w \,\big|\,
f\in\dom(S),\, w\in\dom(B),\, \eta\in \cN_0 \cap \cW^{\bot}\big\},  \no \\
& S_{B,\cW} = S^*|_{\dom(S_{B,\cW})},       \lb{AS-2}  
\end{align} 
where $\cW^{\bot}$ denotes the orthogonal complement of $\cW$ in $\cN_0$. In 
addition,
\begin{align}
&\dom\big((S_{B,\cW})^{1/2}\big) = \dom\big((S_F)^{1/2}\big) \dotplus 
\dom\big(B^{1/2}\big),   \\
&\big\|(S_{B,\cW})^{1/2}(g+u)\big\|_{\cH}^2 =\big\|(S_F)^{1/2} g\big\|_{\cH}^2 
+ \big\|B^{1/2} u\big\|_{\cH}^2, \\ 
& \hspace*{2.3cm} g \in \dom\big((S_F)^{1/2}\big), \; u \in \dom\big(B^{1/2}\big),  \no
\end{align}
implying,
\begin{equation}\lb{K-ee}
\ker(S_{B,\cW})=\ker(B). 
\end{equation} 
Moreover\footnote{In the case where $\wti \cW \subsetneqq \cW$, we view $\wti B$ as a nondensely defined operator in $\cW$ when alluding to $B \leq \wti B$ in the sense that $\big(\wti f,B \wti f\big)_{\cW} \leq \big(\wti f, {\wti B} \wti f\big)_{\cW}$, $\wti f \in \dom\big(\wti B\big) \subset \dom(B)$.}, 
\begin{equation}
B \leq \wti B \, \text{ implies } \, S_{B,\cW} \leq S_{\wti B,\wti \cW},
\end{equation}
where 
\begin{align}
\begin{split}
& B\colon \dom(B) \subseteq \cW \to \cW, \quad 
 \wti B\colon \dom\big(\wti B\big) \subseteq \wti \cW \to \wti \cW,   \\
& \ol{\dom\big(\wti B\big)} = \wti\cW \subseteq \cW = \ol{\dom(B)}. 
\end{split}
\end{align}

In the above scheme, the Krein--von Neumann extension $S_K$
of $S$ corresponds to the choice $\cW=\cN_0$ and $B=0$ $($with 
$\dom(B)=\dom(B^{1/2})=\cN_0=\ker (S^*)$$)$.
In particular, one thus recovers \eqref{SK}, and \eqref{Fr-4Tf}, and also obtains
\begin{align}
&\dom\big((S_{K})^{1/2}\big) = \dom\big((S_F)^{1/2}\big) \dotplus \ker (S^*),  
\lb{SKform1}  \\
&\big\|(S_{K})^{1/2}(g+u)\big\|_{\cH}^2 =\big\|(S_F)^{1/2} g\big\|_{\cH}^2, 
\quad g \in \dom\big((S_F)^{1/2}\big), \; u \in \ker (S^*).   \lb{SKform2}
\end{align}
Finally, the Friedrichs extension $S_F$ corresponds to the choice $\dom(B) = \cW = \{0\}$ $($i.e., formally, 
$B\equiv\infty$$)$, in which case one recovers \eqref{SF}. 
\end{theorem}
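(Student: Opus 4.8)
\textbf{The plan} is to run the Birman--Krein--Vi\v sik form calculus. Since $S\ge\varepsilon I_\cH$, Theorem~\ref{t2.2} gives $S_F\ge\varepsilon I_\cH$, so $(S_F)^{-1}\in\cB(\cH)$ ($S_F$ boundedly invertible), and moreover $\dom\big((S_F)^{1/2}\big)\cap\cN_0=\{0\}$: if $u\in\ker(S^*)\cap\dom\big((S_F)^{1/2}\big)$ then by \eqref{Fr-4H} $u\in\dom(S_F)$ with $S_Fu=S^*u=0$, hence $u=0$. Combined with \eqref{S*}, every $\phi\in\dom(S^*)$ then has a \emph{unique} representation $\phi=f+(S_F)^{-1}\xi+w$ with $f\in\dom(S)$, $\xi,w\in\cN_0$, and its $\dom\big((S_F)^{1/2}\big)$-part $f+(S_F)^{-1}\xi\in\dom(S_F)$ is unambiguous; this rigidity, forced by strict positivity, underlies everything that follows.

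\textbf{Construction of $S_{B,\cW}$.} Given $(B,\cW)$ as in the statement, on $\dom\big((S_F)^{1/2}\big)\dotplus\dom\big(B^{1/2}\big)$ — a genuine direct sum because $\dom\big(B^{1/2}\big)\subseteq\cW\subseteq\cN_0$ — introduce the nonnegative form $\mathfrak t_{B,\cW}(g+u,g'+u'):=Q_S(g,g')+\big(B^{1/2}u,B^{1/2}u'\big)_\cW$. It contains $\dom(S)$, hence is densely defined, and it is closed: a form-Cauchy sequence $g_n+u_n$ has $(S_F)^{1/2}g_n$ Cauchy, so $g_n$ is Cauchy in $\cH$ (as $S_F\ge\varepsilon I_\cH$) and converges in the closed form $Q_S$ of $S_F$; then $u_n$ converges in $\cH$ and, $B^{1/2}u_n$ being Cauchy, in the closed form of $B$. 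Let $S_{B,\cW}\ge0$ be the self-adjoint operator it generates (second representation theorem). Using $u_h\in\cN_0=\ker(S^*)$ one checks $\mathfrak t_{B,\cW}(h,f)=(h,Sf)_\cH$ for $f\in\dom(S)$ and all $h$ in the form domain, so $S\subseteq S_{B,\cW}=(S_{B,\cW})^*\subseteq S^*$. For the domain formula \eqref{AS-2}: a short computation with the first representation theorem puts every $f+(S_F)^{-1}(Bw+\eta)+w$ into $\dom(S_{B,\cW})$ (with $S_{B,\cW}$ acting as $S^*$); conversely, for $\phi\in\dom(S_{B,\cW})\subseteq\dom\big((S_{B,\cW})^{1/2}\big)=\dom\big((S_F)^{1/2}\big)\dotplus\dom\big(B^{1/2}\big)$ write $\phi=g+u$ and also $\phi=f+(S_F)^{-1}\xi+w$ via \eqref{S*}; the rigidity above forces $w=u\in\dom\big(B^{1/2}\big)$, and testing $\mathfrak t_{B,\cW}(h,\phi)=(h,S_{B,\cW}\phi)_\cH$ against $h=u_h\in\dom\big(B^{1/2}\big)$ yields $\big(B^{1/2}u_h,B^{1/2}u\big)_\cW=(u_h,\xi_1)_\cW$ for all such $u_h$, where $\xi=\xi_1+\xi_2$ with $\xi_1\in\cW$, $\xi_2\in\cN_0\cap\cW^\bot$; by the representation theorem for the form of $B$ this means $u\in\dom(B)$ and $Bu=\xi_1$, so $\phi=f+(S_F)^{-1}(Bu+\xi_2)+u$. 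The square-root formulas are read off directly from $\mathfrak t_{B,\cW}$, and \eqref{K-ee} follows because $\|(S_F)^{1/2}g\|_\cH^2+\|B^{1/2}u\|_\cH^2=0$ iff $g=0$ and $u\in\ker(B)$.

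\textbf{Injectivity, extremal cases, monotonicity.} If $S_{B,\cW}=S_{\widetilde B,\widetilde\cW}$, the form domains agree; intersecting with $\cN_0$ and using $\dom\big((S_F)^{1/2}\big)\cap\cN_0=\{0\}$ gives $\dom\big(B^{1/2}\big)=\dom\big(\widetilde B^{1/2}\big)$, hence $\cW=\widetilde\cW$ (closures) and $B=\widetilde B$ (equal forms). Since $\dom\big(B^{1/2}\big)\subseteq\cN_0$, one has $\dom(\mathfrak t_{0,\cN_0})=\dom\big((S_F)^{1/2}\big)\dotplus\cN_0\supseteq\dom(\mathfrak t_{B,\cW})\supseteq\dom\big((S_F)^{1/2}\big)=\dom(\mathfrak t_{\{0\},\{0\}})$ with $\mathfrak t_{0,\cN_0}\le\mathfrak t_{B,\cW}\le\mathfrak t_{\{0\},\{0\}}$ on the respective smaller domains, so by \eqref{AleqB} $S_{0,\cN_0}$ is the smallest and $S_{\{0\},\{0\}}$ the largest among all the $S_{B,\cW}$; feeding $B=0,\cW=\cN_0$, respectively $\cW=\{0\}$, into the domain and square-root formulas recovers \eqref{SK}, \eqref{SKform1}, \eqref{SKform2}, respectively \eqref{SF} (so $S_{\{0\},\{0\}}=S_F$). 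The monotonicity assertion $B\le\widetilde B\Rightarrow S_{B,\cW}\le S_{\widetilde B,\widetilde\cW}$ is the analogous pointwise comparison $\mathfrak t_{B,\cW}\le\mathfrak t_{\widetilde B,\widetilde\cW}$ on $\dom(\mathfrak t_{\widetilde B,\widetilde\cW})$, using that operator ordering of $B\le\widetilde B$ is equivalent to form ordering, hence $\dom\big(\widetilde B^{1/2}\big)\subseteq\dom\big(B^{1/2}\big)$.

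\textbf{Surjectivity (the crux) and the main obstacle.} Let $\widetilde S\ge0$ be any self-adjoint extension of $S$; Theorem~\ref{t2.2} gives $S_K\le\widetilde S\le S_F$, hence $\dom\big((S_F)^{1/2}\big)\subseteq\dom\big(\widetilde S^{1/2}\big)\subseteq\dom\big((S_K)^{1/2}\big)=\dom\big((S_F)^{1/2}\big)\dotplus\cN_0$, the last equality being \eqref{SKform1}, which here is read off from the Ando--Nishio formula \eqref{an-T1}, \eqref{an-T2} (Proposition~\ref{p2.6}) together with \eqref{SK}. Write $\widetilde Q$ for the form of $\widetilde S$. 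The two facts that drive the argument are: (a) $\widetilde Q$ restricted to $\dom\big((S_F)^{1/2}\big)$ equals $Q_S$, because $\widetilde Q$ is a closed form extending $q_S$ and hence extends its closure $Q_S$; and (b) $\widetilde Q(g,u)=0$ for $g\in\dom\big((S_F)^{1/2}\big)$ and $u\in\cN_0\cap\dom\big(\widetilde S^{1/2}\big)$, because $\widetilde Q(u,f)=(u,\widetilde S f)_\cH=(S^*u,f)_\cH=0$ for $f\in\dom(S)$, while $\dom(S)$ is dense in $\dom\big((S_F)^{1/2}\big)$ in the $Q_S$-form norm and $\widetilde S\le S_F$ makes that norm dominate the $\widetilde Q$-form norm. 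Writing $\phi=g+u\in\dom\big(\widetilde S^{1/2}\big)$ with $g\in\dom\big((S_F)^{1/2}\big)$, $u\in\cN_0$, (a) and (b) give $\widetilde Q(\phi,\phi)=Q_S(g,g)+\mathfrak q(u,u)$ with $\mathfrak q:=\widetilde Q|_{\cN_0\cap\dom(\widetilde S^{1/2})}\ge0$; since the $\mathfrak q$-form norm coincides there with the $\widetilde S$-form norm and $\cN_0$ is closed in $\cH$, $\mathfrak q$ is a densely defined, closed, nonnegative form in $\cW:=\overline{\cN_0\cap\dom(\widetilde S^{1/2})}\subseteq\cN_0$, hence $\mathfrak q=\big(B^{1/2}\,\cdot\,,B^{1/2}\,\cdot\,\big)_\cW$ for a unique $B\ge0$ in $\cW$; consequently $\widetilde Q=\mathfrak t_{B,\cW}$ with matching form domains, so $\widetilde S=S_{B,\cW}$. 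Applying this to $\widetilde S=S_K$ (where $\mathfrak q\equiv0$) shows $S_K=S_{0,\cN_0}$, as claimed. I expect the main obstacle to be exactly this reverse construction — isolating the residual closed form $\mathfrak q$ on $\cN_0$ via (a)--(b), which hinges on the strict positivity of $S$ (bounded invertibility of $S_F$ and the rigidity $\dom\big((S_F)^{1/2}\big)\cap\cN_0=\{0\}$); a secondary technical point is that for $r=\infty$ the usual ``$\dom(S_{B,\cW})/\dom(S)$ has dimension $r$'' bookkeeping is unavailable, so both the identification of $\dom(S_{B,\cW})$ and the closedness of $\mathfrak q$ must be carried out by the form-theoretic means above rather than by a dimension count.
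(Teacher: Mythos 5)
The paper does not actually prove Theorem \ref{t2.8}; it is quoted from the literature (Alonso--Simon, Birman, Schm\"udgen, et al.), and your argument is precisely the standard Birman--Kre\u{\i}n--Vi\v{s}ik form-theoretic proof found in those references: build the closed form $Q_S \oplus (B^{1/2}\cdot,B^{1/2}\cdot)_{\cW}$ on $\dom\big((S_F)^{1/2}\big)\dotplus\dom\big(B^{1/2}\big)$, identify its operator domain via the rigidity $\dom\big((S_F)^{1/2}\big)\cap\cN_0=\{0\}$, and recover $(B,\cW)$ from an arbitrary nonnegative extension by splitting its form along $\dom\big((S_F)^{1/2}\big)\dotplus\cN_0$. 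I find no gap; the only step you leave compressed is the inclusion $\dom\big((S_K)^{1/2}\big)\subseteq\dom\big((S_F)^{1/2}\big)\dotplus\ker(S^*)$, which you attribute to Ando--Nishio --- it does follow (for $u$ with $\sup_v|(u,Sv)|^2_{}/(v,Sv)<\infty$, Riesz representation in the form Hilbert space $\big(\dom\big((S_F)^{1/2}\big),Q_S\big)$ produces $g$ with $(u-g)\perp\ran(S)$, i.e., $u-g\in\ker(S^*)$), and in fact the same Riesz argument applied directly to $\wti Q$ would let you bypass Proposition \ref{p2.6} altogether.
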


For subsequent purposes we also note that under the assumptions on $S$ in Theorem \ref{t2.8}, one has 
\begin{equation}
\dim(\ker (S^*-z I_{\cH})) = \dim(\ker(S^*)) = \dim (\cN_0) = {\rm def} (S), 
\quad z\in \bbC\backslash [\varepsilon,\infty).   \lb{dim}
\end{equation}

We recall that two self-adjoint extensions $S_1$ and $S_2$ of $S$ are called {\it relatively prime} if $\dom (S_1) \cap \dom (S_2) = \dom (S)$. As an immediate consequence of \eqref{SF}, \eqref{SK}, $S_F \geq \varepsilon I_{\cH}$, and hence $(S_F)^{-1} \ker(S^*) \cap \ker(S^*) = \{0\}$, one obtains the following fact: 

\begin{lemma}  \lb{l2.9}
Assume Hypothesis \ref{h2.7}. Then $S_F$ and $S_K$ are 
relatively prime, that is,
\begin{equation}
\dom (S_F) \cap \dom (S_K) = \dom (S).    \lb{RP}
\end{equation}
\end{lemma}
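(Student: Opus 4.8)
The claim to prove is Lemma~\ref{l2.9}: under Hypothesis~\ref{h2.7}, $\dom(S_F)\cap\dom(S_K)=\dom(S)$. The plan is to exploit the two explicit domain decompositions \eqref{SF} and \eqref{SK}, namely $\dom(S_F)=\dom(S)\dotplus(S_F)^{-1}\ker(S^*)$ and $\dom(S_K)=\dom(S)\dotplus\ker(S^*)$, both of which are available because $S\geq\varepsilon I_\cH$ makes $S_F$ boundedly invertible. The inclusion $\dom(S)\subseteq\dom(S_F)\cap\dom(S_K)$ is immediate since $S_F$ and $S_K$ are both extensions of $S$, so the whole content is the reverse inclusion.

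For the reverse inclusion, I would take an arbitrary $h\in\dom(S_F)\cap\dom(S_K)$ and write it two ways using the direct-sum decompositions: $h=f_1+(S_F)^{-1}\eta_1$ with $f_1\in\dom(S)$, $\eta_1\in\ker(S^*)$, and simultaneously $h=f_2+\eta_2$ with $f_2\in\dom(S)$, $\eta_2\in\ker(S^*)$. Subtracting gives $(f_1-f_2)=\eta_2-(S_F)^{-1}\eta_1$, i.e.\ an element of $\dom(S)$ equal to $\eta_2-(S_F)^{-1}\eta_1$. Apply $S^*$ (or equivalently use that on $\dom(S)$ the operator $S^*$ acts as $S$, and $S^*\eta_2=0$): one gets $S(f_1-f_2)=S^*\eta_2-S^*(S_F)^{-1}\eta_1=-\eta_1$, since $(S_F)^{-1}\eta_1\in\dom(S_F)\subseteq\dom(S^*)$ and $S^*(S_F)^{-1}\eta_1=S_F(S_F)^{-1}\eta_1=\eta_1$. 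Hence $\eta_1\in\ran(S)$. But $\eta_1\in\ker(S^*)=\ran(S)^\perp$ by \eqref{Fr-4Tf}, so $\eta_1\in\ran(S)\cap\ran(S)^\perp=\{0\}$, giving $\eta_1=0$. Then $h=f_1\in\dom(S)$, which is exactly what we want. Equivalently, and even more directly, one can argue via the triple decomposition \eqref{S*}: the sum $\dom(S)\dotplus(S_F)^{-1}\ker(S^*)\dotplus\ker(S^*)$ is direct, so an element lying in both $\dom(S)\dotplus(S_F)^{-1}\ker(S^*)$ and $\dom(S)\dotplus\ker(S^*)$ must have vanishing $(S_F)^{-1}\ker(S^*)$-component and vanishing $\ker(S^*)$-component, hence lies in $\dom(S)$.

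The only point requiring a little care—and the one flagged in the text preceding the lemma—is the directness of these sums, equivalently the fact that $(S_F)^{-1}\ker(S^*)\cap\ker(S^*)=\{0\}$. This is where strict positivity enters essentially: if $g\in\ker(S^*)$ and $(S_F)^{-1}g'=g$ for some $g'\in\ker(S^*)$, then $g'=S_F g$, and since $g\in\ker(S^*)$ with $S_F\subseteq S^*$ we would need $g\in\dom(S_F)$ and $S_F g=S^*g=0$ if $g$ happened to lie in $\dom(S_F)$; more robustly, $(g,g')_\cH=(g,S_F g)_\cH=\|(S_F)^{1/2}g\|_\cH^2\geq\varepsilon\|g\|_\cH^2$, while on the other hand $g'\in\ker(S^*)=\ran(S)^\perp$ and $g=(S_F)^{-1}g'$; pairing $g'$ with $g$ and using that $(S_F)^{-1}$ maps into $\dom(S_F)$ one deduces $\|g\|_\cH=0$. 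I would simply cite the sentence already in the excerpt (``$S_F\geq\varepsilon I_\cH$, and hence $(S_F)^{-1}\ker(S^*)\cap\ker(S^*)=\{0\}$'') together with \eqref{SF}, \eqref{SK}. So there is really no serious obstacle: the lemma is a short formal consequence of Theorem~\ref{t2.2}, and the ``hard part,'' such as it is, is just making sure the direct-sum bookkeeping is stated cleanly.

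\begin{proof}
Since $S\subseteq S_F$ and $S\subseteq S_K$, one has $\dom(S)\subseteq\dom(S_F)\cap\dom(S_K)$. Conversely, let $h\in\dom(S_F)\cap\dom(S_K)$. By \eqref{SF} and \eqref{SK} there are $f_1,f_2\in\dom(S)$, $\eta_1,\eta_2\in\ker(S^*)$ with
\begin{equation}
h=f_1+(S_F)^{-1}\eta_1=f_2+\eta_2.
\end{equation}
Both representations exhibit $h$ as an element of the direct sum $\dom(S)\dotplus(S_F)^{-1}\ker(S^*)\dotplus\ker(S^*)=\dom(S^*)$ of \eqref{S*}: the first with vanishing $\ker(S^*)$-summand, the second with vanishing $(S_F)^{-1}\ker(S^*)$-summand. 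By the uniqueness of the decomposition (which rests on $S_F\geq\varepsilon I_\cH$, hence $(S_F)^{-1}\ker(S^*)\cap\ker(S^*)=\{0\}$), both summands vanish, so $h=f_1\in\dom(S)$. This proves \eqref{RP}.
\end{proof}
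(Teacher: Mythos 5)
Your proof is correct and follows exactly the route the paper indicates (it states Lemma~\ref{l2.9} as an ``immediate consequence'' of \eqref{SF}, \eqref{SK}, and the fact that $S_F\geq\varepsilon I_{\cH}$ forces $(S_F)^{-1}\ker(S^*)\cap\ker(S^*)=\{0\}$, i.e.\ the directness of the sum in \eqref{S*}), which is precisely your argument. The only blemish is the ``more robustly'' pairing digression, which is muddled and unnecessary: since $g=(S_F)^{-1}g'$ automatically lies in $\dom(S_F)\subseteq\dom(S^*)$, one gets $g'=S_Fg=S^*g=0$ directly, so $g=0$.
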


Next, we consider a self-adjoint operator
\begin{equation} \lb{Barr-1}
T:\dom(T)\subseteq \cH\to\cH,\quad T=T^*,
\end{equation} 
which is bounded from below, that is, there exists $\alpha\in\bbR$ such that
\begin{equation} \lb{Barr-2}
T\geq \alpha I_{\cH}.
\end{equation} 
We denote by $\{E_T(\lambda)\}_{\lambda\in\bbR}$ the family of strongly right-continuous spectral projections of $T$, and introduce $E_T((a,b))=E_T(b_-) - E_T(a)$, 
$E_T(b_-) = \slim_{\varepsilon\downarrow 0}E_T(b-\varepsilon)$, $-\infty \leq a < b$. In addition, we set 
\begin{equation} \lb{Barr-3}
\mu_{T,j} = \inf\,\bigl\{\lambda\in\bbR\,|\,
\dim (\ran (E_T((-\infty,\lambda)))) \geq j\bigr\},\quad j\in\bbN.
\end{equation} 
Then, for fixed $k\in\bbN$, either: \\
$(i)$ $\mu_{T,k}$ is the $k$th eigenvalue of $T$ counting multiplicity below the bottom of the essential spectrum, $\sigma_{ess}(T)$, of $T$, \\
or \\
$(ii)$ $\mu_{T,k}$ is the bottom of the essential spectrum of $T$, 
\begin{equation}
\mu_{T,k} = \inf \{\lambda \in \bbR \,|\, \lambda \in \sigma_{ess}(T)\}, 
\end{equation}
and in that case $\mu_{T,k+\ell} = \mu_{T,k}$, $\ell\in\bbN$, and there are at most $k-1$ eigenvalues (counting multiplicity) of $T$ below $\mu_{T,k}$. 

We now record a basic result of M.\ Krein \cite{Kr47} with an important extension due 
to Alonso and Simon \cite{AS80}. For this purpose we introduce the {\it reduced  
Krein--von Neumann operator} $\hatt S_K$ in the Hilbert space $\hatt \cH$ (cf.\ \eqref{Fr-4Tf})
\begin{equation}
\hatt \cH = [\ker (S^*)]^{\bot} = [I_{\cH} - P_{\ker(S^*)}] \cH 
= [I_{\cH} - P_{\ker(S_K)}] \cH = [\ker (S_K)]^{\bot},    \lb{hattH}
\end{equation}
by 
\begin{align}
\hatt S_K:&=S_K|_{[\ker(S_K)]^{\bot}}    \lb{Barr-4} \\
\begin{split}
& = S_K[I_{\cH} - P_{\ker(S_K)}]  \, \text{ in $[I_{\cH} - P_{\ker(S_K)}]\cH$}    \lb{SKP} \\
&= [I_{\cH} - P_{\ker(S_K)}]S_K[I_{\cH} - P_{\ker(S_K)}] 
\, \text{ in $[I_{\cH} - P_{\ker(S_K)}]\cH = \hatt \cH$},  
\end{split}
\end{align} 
where $P_{\ker(S_K)}$ denotes the orthogonal projection onto $\ker(S_K) = \ker(S^*)$, and we are alluding to the orthogonal direct sum decomposition of $\cH$ into 
\begin{equation}
\cH = P_{\ker(S_K)}\cH \oplus [I_{\cH} - P_{\ker(S_K)}]\cH = P_{\ker(S_K)}\cH \oplus \hatt \cH.   \lb{2.50}
\end{equation}

The following result is due to M.\ Krein \cite{Kr47}; we provide its elementary proof:

\begin{theorem} [\cite{Kr47}, Theorems~12 and 13] \lb{t3.13}
Assume Hypothesis \ref{h2.7} and let $\wti S$ be any self-adjoint extension $S$ such that 
$0 \in \rho \big(\wti S\big)$. Then  
\begin{equation}
\lb{3.28a}
\big(\hatt S_K\big)^{-1} = 
\left[I_{\cH}-P_{\ker(S_K)}\right]\big(\wti S\big)^{-1}\left[I_{\cH}-P_{\ker(S_K)}\right] 
\, \text{ in $\hatt \cH = [I_{\cH} - P_{\ker(S_K)}]\cH$}. 
\end{equation}
In particular, \eqref{3.28a} holds for $\wti S = S_F$, that is,
\begin{equation}
\lb{2.FK}
\big(\hatt S_K\big)^{-1} = 
\left[I_{\cH}-P_{\ker(S_K)}\right](S_F)^{-1}\left[I_{\cH}-P_{\ker(S_K)}\right] 
\, \text{ in $\hatt \cH = [I_{\cH} - P_{\ker(S_K)}]\cH$}. 
\end{equation}
\end{theorem}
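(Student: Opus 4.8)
The plan is to establish the formula \eqref{3.28a} for an arbitrary self-adjoint extension $\wti S$ of $S$ with $0 \in \rho\big(\wti S\big)$, and then observe that $S_F$ is a special case by virtue of $S_F \geq \varepsilon I_{\cH}$ (so $0 \in \rho(S_F)$), which was recorded in Theorem \ref{t2.8}. Throughout I write $P = P_{\ker(S_K)} = P_{\ker(S^*)}$ and $Q = I_{\cH} - P$, so that $\hatt\cH = Q\cH$ and $\hatt S_K$ acts in $Q\cH$. The key structural facts I would use are: first, $\ran(S_K) = [\ker(S^*)]^{\bot} = Q\cH$ (since $\ker(S_K)^\bot = \ran(S)^\bot{}^\bot = \ol{\ran(S)}$ and in fact $\ran(S_K) = \hatt\cH$ follows from $\hatt S_K$ being boundedly invertible in $\hatt\cH$, which is part of Krein's theorem — alternatively one derives it directly); second, the domain decompositions \eqref{SK} $\dom(S_K) = \dom(S) \dotplus \ker(S^*)$ and the fact that $\wti S \supseteq S$ so that $\wti S$ and $S_K$ agree on $\dom(S)$.

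The heart of the matter is the following computation. Fix $g \in \hatt\cH = Q\cH$. Since $\hatt S_K$ is boundedly invertible in $\hatt\cH$, there is a unique $h \in \dom(\hatt S_K) = \dom(S_K) \cap \hatt\cH$ with $\hatt S_K h = g$, i.e.\ $S_K h = g$ and $Ph = 0$. By \eqref{SK} write $h = f + \eta$ with $f \in \dom(S)$ and $\eta \in \ker(S^*)$; then $S_K h = S f$. Now I want to show $Q\big(\wti S\big)^{-1} Q g = h$, equivalently $\big(\wti S\big)^{-1} g - h \in \ker(S^*)$ (since $Qx = h$ for $x$ with $Ph$-part killed means $x - h \in P\cH = \ker(S^*)$, using $Qh = h$). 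Consider $w := \big(\wti S\big)^{-1} g \in \dom\big(\wti S\big)$. I must produce $f \in \dom(S)$ with $w - f \in \ker(S^*)$ and $Sf = g$. Indeed, since $g \in \hatt\cH = \ran(S_K) \subseteq \ran(S^*)$... more carefully: $g = S_K h = S f$ with the $f$ above, so $f \in \dom(S)$ satisfies $Sf = g$. Then $\wti S(w - f) = g - Sf = 0$ (using $\wti S f = S f$ since $S \subseteq \wti S$), hence $w - f \in \ker\big(\wti S\big) = \{0\}$, giving $w = f \in \dom(S)$. Therefore $w - h = f - (f + \eta) = -\eta \in \ker(S^*)$, and applying $Q$ yields $Q\big(\wti S\big)^{-1} g = Qw = Q(h + \eta) = h$ because $\eta \in \ker(S^*) = P\cH$ so $Q\eta = 0$ and $Qh = h$. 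Since $Qg = g$ to begin with, this is exactly $Q\big(\wti S\big)^{-1} Q g = h = \big(\hatt S_K\big)^{-1} g$, as desired.

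The step I expect to require the most care is the justification that $g \in \hatt\cH$ really lies in $\ran(S) \cap \dom(S^*)$ with the right preimage structure — i.e.\ that the element $h$ with $S_K h = g$, $Ph = 0$ decomposes as $f + \eta$ with $Sf = g$ exactly. This hinges on $\ran(S_K) = \hatt\cH$ and on reading \eqref{SK} correctly: one must check that for $h \in \dom(S_K) \cap \hatt\cH$ the decomposition $h = f + \eta$ has $S_K h = S^* h = S^* f + S^* \eta = Sf + 0 = Sf$, which is immediate from $S \subseteq S_K \subseteq S^*$ and $\eta \in \ker(S^*)$. A second small point is the bounded invertibility of $\hatt S_K$ in $\hatt\cH$: this is part of the cited Krein result, but it also drops out of the argument, since the displayed identity exhibits $\big(\hatt S_K\big)^{-1}$ as the restriction of a bounded operator. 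I would also remark that both sides of \eqref{3.28a} are bounded operators on $\hatt\cH$, so verifying the identity on the dense (in fact, all of) $\hatt\cH$ via the above pointwise computation suffices; no closure or limiting argument is needed. Finally, specializing $\wti S = S_F$ and invoking $S_F \geq \varepsilon I_{\cH}$ from Theorem \ref{t2.8} gives \eqref{2.FK}.
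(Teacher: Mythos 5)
Your proof is correct and follows essentially the same route as the paper's: both arguments produce an element of $\dom(S)$ that $S$ maps to $g$, identify it with $\big(\wti S\big)^{-1}g$ using $0\in\rho\big(\wti S\big)$ and $S\subseteq\wti S$, observe that it differs from $\big(\hatt S_K\big)^{-1}g$ by an element of $\ker(S^*)$, and then project with $I_{\cH}-P_{\ker(S_K)}$. The only cosmetic difference is that the paper obtains the preimage $f_1\in\dom(S)$ of $g$ directly from the closedness of $\ran(S)$ (so that $\hatt\cH=\ran(S)$), whereas you extract it from the decomposition \eqref{SK} of $\dom(S_K)$ applied to $h=\big(\hatt S_K\big)^{-1}g$; both variants rely equally on the bounded invertibility of $\hatt S_K$ supplied by Krein's theory.
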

\begin{proof}
It suffices to show that for any $g\in \ran (I_{\cH}-P_{\ker(S^*)})$, 
\begin{equation}
\lb{3.28b}
\big(\hatt S_K\big)^{-1} g= [I_{\cH}-P_{\ker(S^*)}] \big(\wti S\big)^{-1} g 
= [I_{\cH}-P_{\ker(S^*)}]\big(\wti S\big)^{-1} [I_{\cH}-P_{\ker(S^*)}] g.
\end{equation}
Since by hypothesis $S\geq \varepsilon I_{\cH}$, one obtains that $\ran(S)=\ol{\ran(S)}$ is closed 
(see, e.g., \cite[Theorem~1.1' and the Corollary to Theorem~1.7]{Br59}, \cite[Proposition~2.1\,$(iv)$]{Sc12}) and hence $\ran(I_{\cH}-P_{\ker(S^*)})=\ran(S)$ yields $g\in \ran(S)$. Introducing 
$f_1\in \dom(S) \subset \dom(S^*)$ such that $g = S f_1$, and defining 
$f= \big(\hatt S_K\big)^{-1} g$, one concludes that $f\in \dom(S_K) \subset \dom(S^*)$, and hence, 
$(f_1 - f) \in \dom(S^*)$. Since $S_K$ and $\wti S$ are both extensions of $S$ one 
obtains 
\begin{align}
& S^*f_1=Sf_1= \wti S f_1=S_Kf_1=g,     \lb{3.28c} \\
& S^*f = S_Kf = \hatt S_Kf = g.     \lb{3.28d} 
\end{align}
Equations \eqref{3.28c} and \eqref{3.28d} imply 
\begin{equation}
\lb{3.28e3}
S^*(f_1-f)=0, \, \text{ or equivalently, } \, (f_1 - f) \in \ker(S^*). 
\end{equation}
Finally, since $f \in \ran (I_{\cH}-P_{\ker(S^*)})$, one infers that 
\begin{align}
 f& = [I_{\cH}-P_{\ker(S^*)}] f  
 = [I_{\cH}-P_{\ker(S^*)}] (f_1 + (f - f_1)) 
 = [I_{\cH}-P_{\ker(S^*)}]f_1   \no \\ 
 & = [I_{\cH}-P_{\ker(S^*)}]\big(\wti S\big)^{-1}g.    \lb{3.28e1} 
\end{align}
Since $ f = \big(\hatt S_K\big)^{-1} g$, one obtains \eqref{3.28b}, and hence \eqref{3.28a}.
\end{proof}

\begin{theorem} [\cite{AS80}, \cite{Kr47}] \lb{t2.11}
Assume Hypothesis \ref{h2.7}. Then, 
\begin{equation}\lb{Barr-5}
\varepsilon \leq \mu_{S_F,j} \leq \mu_{\hatt S_K,j}, \quad j\in\bbN.
\end{equation} 
In particular, if the Friedrichs extension $S_F$ of $S$ has purely discrete
spectrum, then, except possibly for $\lambda=0$, the Krein--von Neumann extension
$S_K$ of $S$ also has purely discrete spectrum in $(0,\infty)$, that is, 
\begin{equation}
\sigma_{ess}(S_F) = \emptyset \, \text{ implies } \, 
\sigma_{ess}(S_K) \backslash\{0\} = \emptyset.      \lb{ESSK}
\end{equation}
Equivalently, \eqref{ESSK} can be rephrased as 
\begin{align}
\begin{split}
& (S_F - z_0 I_{\cH})^{-1} \in \cB_{\infty}(\cH) 
\, \text{ for some $z_0\in \bbC\backslash [\varepsilon,\infty)$ implies}   \\ 
& \quad 
(S_K - zI_{\cH})^{-1}[I_{\cH} - P_{\ker(S_K)}] \in \cB_{\infty}(\cH) 
 \, \text{ for all $z\in \bbC\backslash [0,\infty)$}. 
\lb{COMPK}
\end{split} 
\end{align} 
\end{theorem}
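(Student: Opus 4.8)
The plan is to deduce \eqref{Barr-5} from the Krein resolvent identity \eqref{2.FK} together with the min-max characterization \eqref{Barr-3} of the numbers $\mu_{T,j}$, and then to read off \eqref{ESSK} and \eqref{COMPK} as straightforward consequences. First I would invoke Theorem \ref{t3.13}, specialized to $\wti S = S_F$, to write
\begin{equation}
\big(\hatt S_K\big)^{-1} = \left[I_{\cH} - P_{\ker(S_K)}\right] (S_F)^{-1} \left[I_{\cH} - P_{\ker(S_K)}\right] \, \text{ in $\hatt \cH$.} \notag
\end{equation}
Since $S_F \geq \varepsilon I_{\cH}$ implies $0 \leq (S_F)^{-1} \leq \varepsilon^{-1} I_{\cH}$, the operator on the right is a compression of $(S_F)^{-1}$ to the subspace $\hatt\cH$; hence for every $u \in \hatt\cH$ one has
\begin{equation}
\big(u, \big(\hatt S_K\big)^{-1} u\big)_{\cH} = \big([I_{\cH} - P_{\ker(S_K)}] u, (S_F)^{-1} [I_{\cH} - P_{\ker(S_K)}] u\big)_{\cH} = \big(u, (S_F)^{-1} u\big)_{\cH}, \notag
\end{equation}
where the last step uses $[I_{\cH} - P_{\ker(S_K)}] u = u$ for $u \in \hatt\cH$. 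Thus $\big(\hatt S_K\big)^{-1}$ is exactly the compression of the bounded nonnegative operator $(S_F)^{-1}$ to the closed subspace $\hatt\cH$.

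Next I would apply the min-max principle to the bounded self-adjoint operators $(S_F)^{-1}$ on $\cH$ and $\big(\hatt S_K\big)^{-1}$ on $\hatt\cH$. Writing $\nu_{T,j}$ for the $j$th eigenvalue from the top (counting multiplicity, down to the top of the essential spectrum) of a bounded nonnegative self-adjoint operator $T$, the max-min formula gives
\begin{equation}
\nu_{(S_F)^{-1}, j} = \sup_{\substack{\cV \subseteq \cH \\ \dim \cV = j}} \inf_{\substack{v \in \cV \\ \|v\|_{\cH} = 1}} \big(v, (S_F)^{-1} v\big)_{\cH}, \notag
\end{equation}
and similarly for $\big(\hatt S_K\big)^{-1}$ with $\cV$ ranging over $j$-dimensional subspaces of $\hatt\cH \subseteq \cH$. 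Restricting the supremum to subspaces of $\hatt\cH$ only decreases it, and on such subspaces the two quadratic forms agree by the displayed identity above; hence $\nu_{\big(\hatt S_K\big)^{-1}, j} \leq \nu_{(S_F)^{-1}, j}$ for all $j \in \bbN$. Inverting (the map $\lambda \mapsto \lambda^{-1}$ reverses order on $(0,\infty)$ and converts "$j$th from the top" for the inverse into "$j$th from the bottom" for the operator) yields $\mu_{S_F, j} \leq \mu_{\hatt S_K, j}$, and $\mu_{S_F, j} \geq \varepsilon$ is immediate from $S_F \geq \varepsilon I_{\cH}$; this is \eqref{Barr-5}. The one point needing care is the bookkeeping at the bottom of the essential spectrum — i.e., verifying that the comparison of the $\mu_{\cdot,j}$ in the sense of \eqref{Barr-3} really does follow from the comparison of the compression's spectral counting function, including the degenerate case where $\mu_{S_F,k}$ sits at the bottom of $\sigma_{ess}(S_F)$; this is exactly the content of the dichotomy $(i)$–$(ii)$ recorded before the theorem and is the main (though routine) obstacle.

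Finally, the consequences \eqref{ESSK} and \eqref{COMPK} follow quickly. If $\sigma_{ess}(S_F) = \emptyset$, then $\mu_{S_F, j} \to \infty$ as $j \to \infty$, so by \eqref{Barr-5} also $\mu_{\hatt S_K, j} \to \infty$, whence $\hatt S_K$ has purely discrete spectrum; since $\sigma(S_K) = \{0\} \cup \sigma(\hatt S_K)$ by the orthogonal decomposition \eqref{2.50}, this gives $\sigma_{ess}(S_K) \backslash \{0\} = \emptyset$. For \eqref{COMPK}: the hypothesis $(S_F - z_0 I_{\cH})^{-1} \in \cB_\infty(\cH)$ for one (hence every) $z_0 \in \bbC \backslash [\varepsilon,\infty)$ is equivalent to $\sigma_{ess}(S_F) = \emptyset$; then $\big(\hatt S_K\big)^{-1} \in \cB_\infty\big(\hatt\cH\big)$ by what was just shown (equivalently by \eqref{2.FK}, since compact operators form an ideal and $(S_F)^{-1} \in \cB_\infty(\cH)$). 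Regarding $\big(\hatt S_K\big)^{-1}$ as acting on all of $\cH$ by extension by zero on $\ker(S_K)$, one has for $z \in \bbC \backslash [0,\infty)$ the identity
\begin{equation}
(S_K - z I_{\cH})^{-1} [I_{\cH} - P_{\ker(S_K)}] = \big(\hatt S_K - z I_{\hatt\cH}\big)^{-1} [I_{\cH} - P_{\ker(S_K)}] = \big(I_{\hatt\cH} - z \big(\hatt S_K\big)^{-1}\big)^{-1} \big(\hatt S_K\big)^{-1} [I_{\cH} - P_{\ker(S_K)}], \notag
\end{equation}
using that $S_K$ reduces with respect to the decomposition \eqref{2.50} and is zero on $\ker(S_K)$; the right-hand side lies in $\cB_\infty(\cH)$ since $\big(\hatt S_K\big)^{-1} \in \cB_\infty\big(\hatt\cH\big)$ and the remaining factor is bounded. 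This establishes \eqref{COMPK} and completes the proof.
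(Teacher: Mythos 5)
Your proof is correct, but it is worth pointing out that the paper itself gives no proof of Theorem \ref{t2.11}: the result is stated as a citation, with \eqref{Barr-5} attributed to Alonso--Simon \cite{AS80} and \eqref{ESSK}, \eqref{COMPK} to Krein \cite{Kr47}. What you have written is essentially the standard (Alonso--Simon) argument reconstructed from ingredients the paper does prove: the identity \eqref{2.FK} (Theorem \ref{t3.13}, whose proof is elementary and independent of Theorem \ref{t2.11}, so there is no circularity) exhibits $\big(\hatt S_K\big)^{-1}$ as the compression of the bounded nonnegative operator $(S_F)^{-1}$ to $\hatt\cH$, and the max--min principle then forces the ordered ``eigenvalues from the top'' of the compression to lie below those of $(S_F)^{-1}$. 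The step you flag but do not carry out---translating between $\mu_{T,j}$ as defined in \eqref{Barr-3} and the $j$th-from-the-top max--min value of $T^{-1}$---is indeed routine: for $T=T^*\geq \delta I$ with $\delta>0$ the spectral projections satisfy $\mathbbm{1}_{(\mu,\infty)}(T^{-1})=\mathbbm{1}_{(-\infty,1/\mu)}(T)$ for $\mu>0$, so $\mu_{T,j}=1/\nu_{T^{-1},j}$ with $\nu_{T^{-1},j}=\sup\bigl\{\mu>0 \st \dim\bigl(\ran\bigl(\mathbbm{1}_{(\mu,\infty)}(T^{-1})\bigr)\bigr)\geq j\bigr\}$, and the dichotomy $(i)$--$(ii)$ recorded before the theorem is preserved under this order-reversing bijection of $(0,\infty)$. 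Your derivations of \eqref{ESSK} and \eqref{COMPK} are also sound: you correctly separate the two mechanisms (the ideal property of $\cB_\infty$ applied to \eqref{2.FK} for the compactness of $\big(\hatt S_K\big)^{-1}$, and the reducing decomposition \eqref{2.50} together with $\bbC\backslash[0,\infty)\subseteq\rho\big(\hatt S_K\big)$ for passing to $(S_K-zI_{\cH})^{-1}[I_{\cH}-P_{\ker(S_K)}]$), and you correctly refrain from claiming $\sigma_{ess}(S_K)=\emptyset$, which would be false when $r=\infty$ since $0$ is then an eigenvalue of infinite multiplicity.
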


While \eqref{ESSK} (resp., \eqref{COMPK}) is a classical result of Krein \cite{Kr47}, the more general 
fact \eqref{Barr-5} is due to Alonso and Simon \cite{AS80}. 

Subsequently,  in Theorem \ref{t4.1}, we will slightly improve on \eqref{COMPK} to the effect that 
$z\in \bbC\backslash [0,\infty)$ can be replaced by $z\in \bbC\backslash [\varepsilon,\infty)$. More 
importantly, Theorem \ref{t4.1} recalls the extension of the result \eqref{COMPK}, to trace ideals 
$\cB_p(\cH)$, $p \in (0, \infty]$ (recorded earlier in \cite{AGMST10} for $p \in [1,\infty]$). 

Concluding this section, we point out that a great variety of additional results 
for Krein--von Neumann (and Krein--von Neumann-type) extensions can be found, for instance, in 
\cite[\S 109]{AG81a}, \cite{AS80}, \cite{AN70}, \cite{Ar98}, \cite{Ar00}, \cite[Chs.~9, 10]{ABT11}, \cite{AHSD01}, \cite{AK11}, \cite{AT02}, \cite{AT03}, \cite{AT05}, \cite{AT09},  \cite{AGLMS17}, \cite{AGMST10}, \cite{AGMST13}, \cite{AGMT10}, \cite{BGM22}, \cite[Ch.~5]{BHS20}, \cite{BC05}, \cite{DM91}, \cite{DM95}, \cite[Ch.~8]{DM17}, \cite{EM05}, \cite{EMP04}, \cite{EMMP07}, \cite[Part III]{Fa75}, \cite{FN09}, \cite{FN10}, \cite{FGKLNS22}, \cite[Sect.~3.3]{FOT11}, \cite{GMO21}, \cite{GM11}, \cite[App.~D]{GNZ24}, \cite{Gr83}, \cite{Gr06}, \cite[Ch.~13]{Gr09}, \cite{Gr12}, \cite{Ha57}, \cite{HK09}, \cite{HMD04}, \cite{HSDW07}, \cite{HSS09}, \cite{Ko88}, \cite[Ch.~3]{Ko99}, \cite{KO77}, \cite{KO78}, \cite{Ne83}, \cite{PS96}, \cite{SS03}, \cite{SS03}, \cite[Sects.~13.3, 14.8]{Sc12}, \cite{Si98}, \cite{Sk79}, \cite{St96}, \cite{Ts80}, \cite{Ts81}, \cite{Ts92}, \cite{Vo15}, and the references therein. In particular, we refer to the encyclopedic treatment of self-adjoint extensions and boundary triplets treated in the monograph \cite[Chs.~2--5]{BHS20}. 

For the unitary equivalence between the reduced Krein--von Neumann operator $\hat S_K$ and an abstract buckling problem operator, see \cite{AGMST10}, \cite{AGMST13}, \cite{AGMT10}, \cite[App.~D.4]{GNZ24}.

Since the literature on the Friedrichs extension is so voluminous, we just refer to Friedrichs \cite{Fr34}, \cite{Fr35}, and Freudenthal \cite{Fr36}, and the following discussions in the monographs, \cite[Ch.~5]{BHS20}, \cite[Sect.~14.8]{BSU96}, \cite[p.~164--167, 177]{EE18}, \cite[Part III]{Fa75}, \cite[Ch.~3]{GG91}, \cite[Sect.~4.4, Chs.~12, 13]{Gr09},
\cite[Sect.~VI.2, ]{Ka80}, \cite[Sect.~X.3]{RS75}, \cite[Chs.~10, 11, 13, 14]{Sc12}, \cite[Sect.~5.5]{We80}, \cite[Sect.~4.2]{We00}.

\section{On Krein-Type Resolvent Formulas \\ and Donoghue-Type $M$-Operators} \lb{s3}

In this section we discuss various Krein-type resolvent formulas and start by recalling some pertinent facts derived in \cite{GKMT01} and \cite{GMT98}. 

Temporarily, we make the following assumption:

\begin{hypothesis} \lb{h3.1}
Suppose that $S$ is a densely defined, symmetric, closed operator in $\cH$ with deficiency indices $(r,r)$, $r \in \bbN \cup \{\infty\}$. 
\end{hypothesis}

Let $\wti S$ be a self-adjoint extension (not necessarily bounded from below) of $S$ and define 
\begin{equation}\lb{e1}
U_{{\wti S},z,z_0}=I_{\cH}+(z-z_0)\big(\wti S-z I_{\cH}\big)^{-1}
= \big(\wti S-z_0 I_{\cH}\big)\big(\wti S-z I_{\cH}\big)^{-1}, 
\quad z,z_0\in\rho \big(\wti S\big).
\end{equation}
One then verifies
\begin{equation}\lb{e2}
U_{{\wti S},z_0,z_0} = I_{\cH}, \quad U_{{\wti S},z_0,z_1}U_{{\wti S},z_1,z_2}
=U_{{\wti S},z_0,z_2},   \quad  z_0, z_1, z_2\in \rho \big(\wti S\big), 
\end{equation}
and
\begin{equation}\lb{e3}
U_{{\wti S},z,z_0} \ker (S^*-z_0 I_{\cH})=\ker (S^*-z I_{\cH}), \quad z,z_0\in\rho \big(\wti S\big). 
\end{equation}

The deficiency subspaces
$\cN_{\pm}$ of $S$ are given by
\begin{equation}
\cN_{\pm}=\ker({S}^*\mp i I_{\cH}), \quad \dim (\cN_{\pm})= {\rm def} (S) = r \in \bbN \cup \{\infty\}, 
\lb{2.59}
\end{equation}
and for any self-adjoint extension $\wti S$ of $S$ in $\cH$ ,
the corresponding unitary Cayley transform $C_{\wti S}$ in $\cH$ is defined by
\begin{equation}
C_{\wti S} = U_{{\wti S},i,-i} = \big(\wti S+i I_{\cH}\big)\big(\wti S-i I_{\cH}\big)^{-1},
\lb{2.60}
\end{equation}
implying
\begin{equation}
C_{\wti S}\cN_-=\cN_+.
\lb{2.61}
\end{equation}

Let $S_\ell$, $\ell=1,2$,  be two distinct, relatively prime self-adjoint extensions 
of $S$, that is, $\dom (S)=\dom (S_1)\cap \dom (S_2)$ (again, at this instance, 
$S_1$ and $S_2$ need not be bounded from below). Associated with $S_1$ and $S_2$ we 
introduce $P_{1,2}(z)\in\cB(\cH)$ by
\begin{align}
\begin{split}
P_{1,2}(z)&= (S_1-z I_{\cH})(S_1-i I_{\cH})^{-1} \big[(S_2-z I_{\cH})^{-1} 
-(S_1-z I_{\cH})^{-1}\big]    \\
& \quad \times (S_1-z I_{\cH})(S_1+i I_{\cH})^{-1}, 
\quad z\in \rho(S_1)\cap\rho(S_2).     \lb{2.62}
\end{split}
\end{align}
We refer to Lemma 2 of \cite{GMT98} and \cite{Sa65} for a detailed discussion of
$P_{1,2}(z)$. Here we only mention the following properties of $P_{1,2}(z)$,  
$z\in \rho(S_1)\cap\rho(S_2)$,
\begin{align}
&P_{1,2}(z)|_{\cN_+^\bot} =0, \quad P_{1,2}(z)\cN_+
\subseteq \cN_+, \lb{2.63} \\
&\ol{\ran (P_{1,2}(i))}= \cN_+, \quad \ran(P_{1,2}(z)|_{\cN_+}) 
\, \text{ is independent of } \, z\in \rho(S_1)\cap\rho(S_2),
\lb{2.64}\\
&P_{1,2}(i)|_{\cN_+} =(i/2) \big(I_{\cH} - C_{S_2}(C_{S_1})^{-1}\big)\big|_{\cN_+}
=(i/2) \big(I_{\cN_+}+e^{-2i\alpha_{1,2}}\big)
\lb{2.65}
\end{align}
for some self-adjoint (possibly unbounded) operator $\alpha_{1,2} $ in $\cN_+$.

Given a self-adjoint extension $\wti S$ of $S$ and a closed linear subspace 
$\cN$ of $\cN_+$, $\cN\subseteq \cN_+$, the Donoghue-type $M$-operator 
$M_{\wti S,\cN}^{D}(z)
\in\cB(\cN)$ associated with the pair $\big(\wti S,\cN\big)$  is defined by
\begin{align}
M_{\wti S,\cN}^{D}(z)&=P_\cN \big(z\wti S+I_\cH\big)\big(\wti S-z I_{\cH}\big)^{-1} P_\cN|_\cN  \no \\
&=zI_\cN+(1+z^2)P_\cN \big(\wti S-z I_{\cH}\big)^{-1} P_\cN|_\cN\,, \quad  
z\in \bbC\backslash \bbR,
\lb{2.66}
\end{align}
with $I_\cN$ the identity operator in $\cN$ and $P_\cN$ the orthogonal projection in 
$\cH$ onto $\cN$.

\begin{remark} \lb{r3.1}
At this point we pause for a moment and note that the Donoghue-type $M$-operator $M_{\wti S,\cN}^{D}(\dott)$ in \eqref{2.66} can be viewed as the Weyl--Titchmarsh operator function in the theory of boundary triplets (see \cite[Chs.~2--5]{BHS20} for an exhaustive treatment of the latter) as detailed in Appendix~A of \cite{BGHN16}, see, in particular, \cite[Proposition~A.4]{BGHN16}. \hfill $\diamond$
\end{remark}

Next, one verifies (cf. Lemma 4 in \cite{GMT98}), 
\begin{align}
\big(P_{1,2}(z)|_{\cN_+}\big)^{-1} &=\big(P_{1,2}(i)|_{\cN_+}\big)^{-1}
-(z-i) P_{\cN_+}(S_1+i I_{\cH})(S_1-z I_{\cH})^{-1} P_{\cN_+}  \no \\
&=\tan (\alpha_{1,2})-M_{S_1, \cN_+}^{D}(z),
\quad z\in \rho(S_1),     \lb{2.67}
\end{align} 
where
\begin{equation}
C_{S_2}(C_{S_1})^{-1}\big|_{\cN_+} = -e^{-2i\alpha_{1,2}}.
\lb{2.68}
\end{equation}
Following Saakjan \cite{Sa65} (in a version presented in Theorem 5 and 
Corollary 6 in \cite{GMT98}), and using the notions introduced in 
\eqref{2.62}--\eqref{2.68}, a special case of Krein's resolvent formula then reads as 
follows (cf.\ \cite{GMT98}, \cite{Sa65}):

\begin{theorem} \lb{t3.2} 
Assume Hypothesis \ref{h3.1} and let $S_1$ and $S_2$ be relatively prime self-adjoint
extensions of $S$. Then  
\begin{align}
(S_2-z I_{\cH})^{-1}&=(S_1-z I_{\cH})^{-1}   \no \\
& \quad +(S_1-i I_{\cH})(S_1-z I_{\cH})^{-1}P_{1,2}(z)
(S_1+i I_{\cH})(S_1-z I_{\cH})^{-1} \lb{2.69} \\
&=(S_1-z I_{\cH})^{-1}+(S_1-i I_{\cH})(S_1-z I_{\cH})^{-1} P_{\cN_+}
\lb{2.70}   \\
&\quad \times \big[\tan (\alpha_{1,2})- M_{S_1,\cN_+}^{D}(z)\big]^{-1}
P_{\cN_+}(S_1+i I_{\cH})(S_1-z I_{\cH})^{-1},    \no \\
& \hspace*{6.65cm} z\in \rho(S_1)\cap\rho(S_2).   \no 
\end{align}
\end{theorem}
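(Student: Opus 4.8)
The plan is to read off \eqref{2.69} directly by inverting the defining relation \eqref{2.62} for $P_{1,2}(z)$, and then to convert it into the $M$-operator form \eqref{2.70} by feeding in the structural information about $P_{1,2}(z)$ recorded in \eqref{2.63} together with the identification \eqref{2.67} of $\big(P_{1,2}(z)|_{\cN_+}\big)^{-1}$ with a translate of the Donoghue-type $M$-operator. Fix $z\in\rho(S_1)\cap\rho(S_2)$. For \eqref{2.69}, observe that since $\pm i,z\in\rho(S_1)$, the two outer factors $(S_1-zI_{\cH})(S_1\mp iI_{\cH})^{-1}=I_{\cH}-(z\mp i)(S_1\mp iI_{\cH})^{-1}$ appearing in \eqref{2.62} lie in $\cB(\cH)$ and are boundedly invertible, with inverses $(S_1\mp iI_{\cH})(S_1-zI_{\cH})^{-1}=I_{\cH}+(z\mp i)(S_1-zI_{\cH})^{-1}\in\cB(\cH)$. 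Multiplying \eqref{2.62} on the left by $(S_1-iI_{\cH})(S_1-zI_{\cH})^{-1}$ and on the right by $(S_1+iI_{\cH})(S_1-zI_{\cH})^{-1}$ cancels these outer factors and leaves
\begin{equation*}
(S_1-iI_{\cH})(S_1-zI_{\cH})^{-1}P_{1,2}(z)(S_1+iI_{\cH})(S_1-zI_{\cH})^{-1}
=(S_2-zI_{\cH})^{-1}-(S_1-zI_{\cH})^{-1},
\end{equation*}
which is precisely \eqref{2.69}.

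To pass to \eqref{2.70}, I would invoke \eqref{2.63}: since $P_{1,2}(z)$ annihilates $\cN_+^{\bot}$ and maps $\cN_+$ into $\cN_+$, one has $P_{1,2}(z)=P_{\cN_+}\big(P_{1,2}(z)|_{\cN_+}\big)P_{\cN_+}$ upon identifying $P_{\cN_+}(\,\cdot\,)P_{\cN_+}$ with operators acting in $\cN_+$. By \eqref{2.67}, $\big(P_{1,2}(z)|_{\cN_+}\big)^{-1}=\tan(\alpha_{1,2})-M_{S_1,\cN_+}^{D}(z)$; since $z\in\rho(S_2)$, this exhibits the bounded operator $P_{1,2}(z)|_{\cN_+}$ as the inverse of $\tan(\alpha_{1,2})-M_{S_1,\cN_+}^{D}(z)$, i.e.\ $P_{1,2}(z)|_{\cN_+}=\big[\tan(\alpha_{1,2})-M_{S_1,\cN_+}^{D}(z)\big]^{-1}$. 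Substituting this into \eqref{2.69} yields \eqref{2.70}.

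Thus, once the preparatory material \eqref{2.62}--\eqref{2.68} is in hand, Theorem~\ref{t3.2} is essentially algebraic, and I expect the genuine obstacle to lie in \eqref{2.67} itself (established as Lemma~4 in \cite{GMT98}, following \cite{Sa65}) rather than in the final assembly. Proving \eqref{2.67} requires: (i) showing $P_{1,2}(z)|_{\cN_+}$ is injective with range independent of $z$ and dense in $\cN_+$ (cf.\ \eqref{2.64}), which is where the relative primeness of $S_1$ and $S_2$ is used; (ii) a computation of first-resolvent-identity type yielding $\big(P_{1,2}(z)|_{\cN_+}\big)^{-1}=\big(P_{1,2}(i)|_{\cN_+}\big)^{-1}-(z-i)P_{\cN_+}(S_1+iI_{\cH})(S_1-zI_{\cH})^{-1}P_{\cN_+}$, together with the rewriting $(S_1+iI_{\cH})(S_1-zI_{\cH})^{-1}=I_{\cH}+(z+i)(S_1-zI_{\cH})^{-1}$, which recognizes the last term as $M_{S_1,\cN_+}^{D}(z)-iI_{\cN_+}$ via \eqref{2.66}; and (iii) identifying $\big(P_{1,2}(i)|_{\cN_+}\big)^{-1}=\tan(\alpha_{1,2})-iI_{\cN_+}$ from \eqref{2.65} and \eqref{2.68}, by the elementary unitary identity $I_{\cN_+}+e^{-2i\alpha_{1,2}}=2e^{-i\alpha_{1,2}}\cos(\alpha_{1,2})$. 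Beyond this, the only care needed is routine bookkeeping of the boundedness of the various factors and of the range of the spectral parameter $z$ for which each step is valid.
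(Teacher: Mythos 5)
Your argument is correct. Note that the paper itself offers no proof of Theorem \ref{t3.2}: it is recorded as a known special case of Krein's formula, with the reader referred to \cite{GMT98} and \cite{Sa65}, and with \eqref{2.62}--\eqref{2.68} supplied precisely so that the statement can be read off. Your reconstruction matches that intended route: \eqref{2.69} is immediate upon multiplying \eqref{2.62} by the bounded inverses $(S_1\mp iI_{\cH})(S_1-zI_{\cH})^{-1}$ of its outer factors, and \eqref{2.70} follows by writing $P_{1,2}(z)=P_{\cN_+}\big(P_{1,2}(z)|_{\cN_+}\big)P_{\cN_+}$ via \eqref{2.63} and inverting \eqref{2.67}; you also correctly locate the genuine content in \eqref{2.67} (Lemma 4 of \cite{GMT98}), which the paper likewise takes as given, with relative primeness of $S_1,S_2$ entering exactly where you say it does, namely in the injectivity of $P_{1,2}(z)|_{\cN_+}$.
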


Next, we recall that $M_{\wti S,\cN}^{D}$ and hence $P_{1,2}(z)|_{\cN_+}$
 and $-(P_{1,2}(z)|_{\cN_+})^{-1}$ (cf.\ \eqref{2.67}), are operator-valued Nevanlinna--Herglotz functions.

\begin{theorem} [\cite{GNWZ19}, Theorem~5.3] \lb{t3.3} 
Assume Hypothesis \ref{h3.1} and 
let $\wti S$ be a self-adjoint extension of  $S$ with orthogonal family of spectral
projections $\{E_{\wti S}(\lambda)\}_{\lambda\in \bbR}$, $\cN$ a closed linear subspace of 
$\cN_+$. Then the Donoghue-type $M$-operator $M_{\wti S,\cN}^{D}(z)$ is analytic for 
$z\in \bbC\backslash\bbR$ and 
\begin{align}
& [\Im(z)]^{-1} \Im\big(M_{\wti S,\cN}^{D}(z)\big) \geq
2 \Big[\big(|z|^2 + 1\big) + \big[\big(|z|^2 -1\big)^2 + 4 (\Re(z))^2\big]^{1/2}\Big]^{-1} I_{\cN},    \no \\
& \hspace*{9.5cm}  z\in \bbC\backslash \bbR,    \lb{2.71}
\end{align}
see \cite{GNWZ19} $($cf.\ also \cite{GMT98}$)$.
In particular,
\begin{equation}
\big[\Im\big(M_{\wti S,\cN}^{D}(z)\big)\big]^{-1} \in \cB(\cN), \quad
z\in \bbC\backslash \bbR,     \lb{2.71A}
\end{equation}
and $M_{\wti S,\cN}^{D}(z)$ is a $\cB(\cN)$-valued Nevanlinna--Herglotz function that 
admits the following representation valid in the strong operator topology of $\cN$,
\begin{equation}
M_{\wti S,\cN}^{D}(z)= \int_\bbR d\Omega^D_{\wti S,\cN}(\lambda)\, \bigg[\f{1}{\lambda-z} -
\f{\lambda}{1+\lambda^2}\bigg], \quad z\in\bbC\backslash\bbR,
\lb{2.72}
\end{equation}
where
\begin{align}
&\Omega^D_{\wti S,\cN}(\lambda)=(1+\lambda^2)
\big(P_\cN E_{\wti S}(\lambda)P_\cN|_\cN\big), \quad \lambda \in\bbR, 
\lb{2.73} \\
&\int_\bbR d\Omega^D_{\wti S,\cN}(\lambda)\, (1+\lambda^2)^{-1}=I_\cN,
\lb{2.74} \\
&\int_\bbR d(f,\Omega^D_{\wti S,\cN} (\lambda)f)_\cN=\infty \, \text{ for all } \, 
f\in \cN\backslash\{0\}.
\lb{2.75}
\end{align}
\end{theorem}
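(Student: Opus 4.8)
The plan is to establish the Nevanlinna--Herglotz representation \eqref{2.72}--\eqref{2.75} directly from the definition \eqref{2.66} of $M_{\wti S,\cN}^D(z)$ by inserting the spectral resolution of $\wti S$, and to obtain the lower bound \eqref{2.71} by a scalar estimate on the integrand. First I would fix $f \in \cN$ and write, using \eqref{2.66} and the spectral theorem for $\wti S$,
\begin{equation*}
\big(f, M_{\wti S,\cN}^D(z) f\big)_\cN
= z \|f\|_\cN^2 + \big(1+z^2\big) \int_\bbR \frac{d\big(f, E_{\wti S}(\lambda) f\big)_\cH}{\lambda - z},
\quad z \in \bbC\backslash\bbR,
\end{equation*}
which is legitimate because $P_\cN f = f$ and $\big(\wti S - z I_{\cH}\big)^{-1}$ has the stated spectral representation. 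Then the elementary algebraic identity $z + \big(1+z^2\big)(\lambda-z)^{-1} = \big(1+\lambda^2\big)\big[(\lambda-z)^{-1} - \lambda(1+\lambda^2)^{-1}\big] + \big[\lambda - \lambda(1+\lambda^2)^{-1}(1+\lambda^2)\big]$, i.e. more cleanly $z + (1+z^2)(\lambda-z)^{-1} = (1+\lambda^2)(\lambda-z)^{-1} - \lambda$, combined with $\int_\bbR d(f,E_{\wti S}(\lambda)f)_\cH = \|f\|_\cH^2 = \|f\|_\cN^2$, rearranges the display into
\begin{equation*}
\big(f, M_{\wti S,\cN}^D(z) f\big)_\cN = \int_\bbR \big(1+\lambda^2\big)\, d\big(f,E_{\wti S}(\lambda)f\big)_\cH \bigg[\frac{1}{\lambda-z} - \frac{\lambda}{1+\lambda^2}\bigg],
\end{equation*}
which is exactly \eqref{2.72} with $\Omega^D_{\wti S,\cN}(\lambda)$ defined by \eqref{2.73} after polarization; \eqref{2.74} is then the normalization $\int (1+\lambda^2)^{-1} d\Omega^D = P_\cN E_{\wti S}(\bbR) P_\cN|_\cN = I_\cN$.

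For the quantitative bound \eqref{2.71}, I would take imaginary parts in the integral representation: with $z = x + iy$, $y \neq 0$,
\begin{equation*}
[\Im(z)]^{-1}\Im\big(f, M_{\wti S,\cN}^D(z)f\big)_\cN = \int_\bbR \frac{1+\lambda^2}{(\lambda-x)^2 + y^2}\, d\big(f,E_{\wti S}(\lambda)f\big)_\cH,
\end{equation*}
and then it suffices to bound the scalar kernel $(1+\lambda^2)/[(\lambda-x)^2+y^2]$ from below by the constant $c(z) := 2\big[(|z|^2+1) + [(|z|^2-1)^2 + 4x^2]^{1/2}\big]^{-1}$ uniformly in $\lambda \in \bbR$, after which \eqref{2.74} (integrating the measure against $(1+\lambda^2)^{-1}$ gives $\|f\|_\cN^2$) finishes the job. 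This uniform lower bound is a single-variable calculus exercise: minimize $\varphi(\lambda) = [(\lambda-x)^2+y^2]/(1+\lambda^2)$ over $\lambda\in\bbR$; the critical points solve a quadratic, and one checks the minimum value equals $\tfrac12\big[(x^2+y^2+1) - [(x^2+y^2-1)^2 + 4x^2]^{1/2}\big]$, whose reciprocal times $2$ matches $c(z)$. Property \eqref{2.71A} is then immediate from \eqref{2.71}, since a self-adjoint operator bounded below by $c(z) I_\cN$ with $c(z) > 0$ is boundedly invertible.

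Finally, \eqref{2.75} encodes that the measure $d\Omega^D_{\wti S,\cN}$ has infinite total mass in the "$(1+\lambda^2)$-weighted at infinity" sense, equivalently that $M_{\wti S,\cN}^D$ has a genuinely unbounded (affine-in-$z$) growth: from \eqref{2.73}, $\int_\bbR d(f,\Omega^D_{\wti S,\cN}(\lambda)f)_\cN = \int_\bbR (1+\lambda^2)\, d(f,E_{\wti S}(\lambda)f)_\cH$, and this diverges precisely when $f \notin \dom\big(\wti S\big)$, which holds for every nonzero $f \in \cN \subseteq \cN_+ = \ker(S^* - iI_{\cH})$ since $\cN_+ \cap \dom(S) = \{0\}$ and $\dom\big(\wti S\big)$ restricted appropriately meets $\cN_+$ only in $\{0\}$ for $\wti S$ a proper extension; more directly, if $f\in\cN_+$ and $\int(1+\lambda^2)\,d(f,E_{\wti S}(\lambda)f)_\cH < \infty$ then $f\in\dom\big(\wti S\big)$ and $\wti S f = S^* f = if$, forcing $f = 0$ by self-adjointness of $\wti S$. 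The main obstacle I anticipate is the calculus optimization yielding the exact constant in \eqref{2.71}; everything else is a transcription of the spectral theorem, but matching the closed-form minimum of $\varphi(\lambda)$ to the precise expression $c(z)$ requires care with the algebra of the discriminant, and one should double-check the sign choice (the "$-$" branch of the square root) so that $c(z) > 0$ and the bound is nontrivial.
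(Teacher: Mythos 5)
The paper gives no proof of Theorem \ref{t3.3}; it is quoted from \cite{GNWZ19}, Theorem~5.3, so your argument can only be judged on its own terms. Your route --- insert the spectral resolution of $\wti S$ into \eqref{2.66}, use the identity $z+(1+z^2)(\lambda-z)^{-1}=(1+\lambda^2)\big[(\lambda-z)^{-1}-\lambda(1+\lambda^2)^{-1}\big]$ to produce \eqref{2.72}--\eqref{2.74}, take imaginary parts to reduce \eqref{2.71} to a pointwise lower bound on the kernel $(1+\lambda^2)/[(\lambda-x)^2+y^2]$, and derive \eqref{2.75} from the observation that $f\in\dom\big(\wti S\big)\cap\cN_+$ forces $\wti S f=S^*f=if$ and hence $f=0$ --- is the standard and correct one; the algebraic identity, the computation $\Im\big[(1+\lambda z)/(\lambda-z)\big]=y(1+\lambda^2)/[(\lambda-x)^2+y^2]$, and the arguments for \eqref{2.71A}, \eqref{2.74}, and \eqref{2.75} all check out.

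The one step that fails as written is the optimization. To bound $1/\varphi(\lambda)$ from below you must bound $\varphi(\lambda)=[(\lambda-x)^2+y^2]/(1+\lambda^2)$ from \emph{above}, i.e.\ you need its maximum, not its minimum. Writing $A=|z|^2+1$ and $B=\big[(|z|^2-1)^2+4(\Re z)^2\big]^{1/2}$, the range of $\varphi$ is $\big[\tfrac12(A-B),\,\tfrac12(A+B)\big]$: the endpoints are the roots of $t^2-At+y^2=0$, obtained by requiring $(\lambda-x)^2+y^2=t(1+\lambda^2)$ to have a double root, and $A^2-B^2=4y^2>0$ shows both are positive. The constant in \eqref{2.71} is $1/\max_{\lambda}\varphi(\lambda)=2/(A+B)=c(z)$, i.e.\ the ``$+$'' branch. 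Your claim that the \emph{minimum} $\tfrac12(A-B)$ has ``reciprocal times $2$'' equal to $c(z)$ is false on both counts: using the minimum of $\varphi$ bounds the kernel in the wrong direction, and arithmetically $\big(\tfrac12(A-B)\big)^{-1}=2/(A-B)=(A+B)/(2y^2)\neq 2/(A+B)$ in general. This is precisely the branch issue you flagged at the end; replacing the minimum by the maximum makes the constant come out exactly, and the rest of the proof stands.
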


We also recall (a special case of) the principal result of \cite{GMT98}, the linear fractional transformation relating the Donoghue-type  $M$-operators associated with different self-adjoint extensions of $S$:

\begin{theorem} \lb{t3.4}
Assume Hypothesis \ref{h3.1} and 
let $S_1$ and $S_2$ be relatively prime self-adjoint extensions of $S$. Then, for 
$z\in\rho(S_1)\cap\rho(S_2)$, 
\begin{align}
M_{S_2, \cN_+}^{D}(z)&=\big[P_{1,2}(i)|_{\cN_+} +\big(I_{\cN_+}+i P_{1,2}(i)|_{\cN_+}\big] M_{S_1, \cN_+}^{D}(z)\big)  \no \\
& \quad \times \big[\big(I_{\cN_+}+ iP_{1,2}(i)|_{\cN_+}\big)-P_{1,2}(i)|_{\cN_+}
M_{S_1, \cN_+}^{D}(z)\big]^{-1},
\lb{2.76}
\end{align}
where
\begin{align}
P_{1,2}(i)|_{\cN_+}&=(i/2) \big(I_\cH -C_{S_2}(C_{S_1})^{-1}\big)\big|_{\cN_+},
\lb{2.77} \\
I_{\cN_+}+iP_{1,2}(i)|_{\cN_+}
&=(1/2) \big(I_\cH +C_{S_2}(C_{S_1})^{-1}\big)\big|_{\cN_+}.
\lb{2.78}
\end{align}
Introducing the self-adjoint operator $\alpha_{1,2}$ in $\cN_+$ by 
\begin{equation}\lb{2.79}
e^{-2i\alpha_{1,2}}=-C_{S_2}(C_{S_1})^{-1}\big|_{\cN_+},
\end{equation}
\eqref{2.76} can be rewritten as
\begin{align}
\begin{split} 
M_{S_2, \cN_+}^{D}(z) &=e^{-i\alpha_{1,2}} \big[\cos (\alpha_{1,2})
+\sin (\alpha_{1,2}) M_{S_1, \cN_+}^{D}(z)\big]    \\
& \quad \times \big[\sin (\alpha_{1,2}) - \cos (\alpha_{1,2})
M_{S_1,\cN_+}^{D}(z)\big]^{-1}e^{i\alpha_{1,2}}. \lb{2.80}
\end{split} 
\end{align}
\end{theorem}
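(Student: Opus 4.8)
The plan is to start from the definition \eqref{2.66} of the Donoghue-type $M$-operator, feed in Krein's resolvent formula for $(S_2 - z I_{\cH})^{-1}$ (Theorem~\ref{t3.2}), and then collapse every operator that appears onto the deficiency subspace $\cN_+$. Throughout, fix $z \in \rho(S_1) \cap \rho(S_2)$ (where the $M$-operators and $P_{1,2}(z)$ are all defined) and abbreviate $M_1 := M_{S_1,\cN_+}^{D}(z)$ and $A := P_{1,2}(i)|_{\cN_+} \in \cB(\cN_+)$. By \eqref{2.66},
\[
M_{S_2,\cN_+}^{D}(z) - M_1 = (1 + z^2)\, P_{\cN_+}\big[(S_2 - z I_{\cH})^{-1} - (S_1 - z I_{\cH})^{-1}\big] P_{\cN_+}\big|_{\cN_+},
\]
and by \eqref{2.69} the bracket equals $(S_1 - i I_{\cH})(S_1 - z I_{\cH})^{-1} P_{1,2}(z)(S_1 + i I_{\cH})(S_1 - z I_{\cH})^{-1}$.

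Next I would use the mapping properties \eqref{2.63}: since $P_{1,2}(z)$ annihilates $\cN_+^{\bot}$ and leaves $\cN_+$ invariant, one has $P_{1,2}(z) = P_{\cN_+}\big(P_{1,2}(z)|_{\cN_+}\big) P_{\cN_+}$, so the correction term collapses to $(1 + z^2)\, R_-(z)\, \big(P_{1,2}(z)|_{\cN_+}\big)\, R_+(z)$ as an operator on $\cN_+$, where $R_{\mp}(z) := P_{\cN_+}(S_1 \mp i I_{\cH})(S_1 - z I_{\cH})^{-1} P_{\cN_+}|_{\cN_+}$. The one genuine computation is the evaluation of $R_{\mp}(z)$: writing $(S_1 \mp i I_{\cH})(S_1 - z I_{\cH})^{-1} = I_{\cH} + (z \mp i)(S_1 - z I_{\cH})^{-1}$, using \eqref{2.66} to express $P_{\cN_+}(S_1 - z I_{\cH})^{-1} P_{\cN_+}|_{\cN_+} = (1+z^2)^{-1}(M_1 - z I_{\cN_+})$, and invoking $1 + z^2 = (z - i)(z + i)$, one finds $R_-(z) = (z + i)^{-1}(M_1 + i I_{\cN_+})$ and $R_+(z) = (z - i)^{-1}(M_1 - i I_{\cN_+})$. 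The scalar prefactors then combine with $1 + z^2$ and cancel entirely, leaving the key intermediate identity
\[
M_{S_2,\cN_+}^{D}(z) = M_1 + (M_1 + i I_{\cN_+})\big(P_{1,2}(z)|_{\cN_+}\big)(M_1 - i I_{\cN_+}).
\]

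To eliminate $P_{1,2}(z)|_{\cN_+}$ in favor of $A = P_{1,2}(i)|_{\cN_+}$ I would invoke \eqref{2.67}: since $(z - i) R_+(z) = M_1 - i I_{\cN_+}$, that identity reads $\big(P_{1,2}(z)|_{\cN_+}\big)^{-1} = A^{-1} - M_1 + i I_{\cN_+}$, equivalently $A\big(P_{1,2}(z)|_{\cN_+}\big)^{-1} = (I_{\cN_+} + i A) - A M_1$, so that $P_{1,2}(z)|_{\cN_+} = \big[(I_{\cN_+} + i A) - A M_1\big]^{-1} A$. Substituting this into the intermediate identity, the claim \eqref{2.76} reduces to a routine operator identity on $\cN_+$, verified by multiplying both sides on the right by $(I_{\cN_+} + iA) - AM_1$ and expanding; the only structural input is $B := I_{\cN_+} + iA$, so that $A$ and $B$ commute, which in particular forces $A(M_1 - i I_{\cN_+})$ to commute with $B - AM_1$ and thereby lets the sandwiched inverse be moved to the far right. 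Finally, \eqref{2.80} drops out of \eqref{2.76} upon substituting the functional-calculus identities $P_{1,2}(i)|_{\cN_+} = i e^{-i\alpha_{1,2}}\cos(\alpha_{1,2})$ and $I_{\cN_+} + i P_{1,2}(i)|_{\cN_+} = i e^{-i\alpha_{1,2}}\sin(\alpha_{1,2})$ (read off from \eqref{2.77}--\eqref{2.79} and $P_{1,2}(i)|_{\cN_+} = (i/2)(I_{\cN_+} + e^{-2 i \alpha_{1,2}})$), then pulling $e^{-i\alpha_{1,2}}$ out of the numerator and $e^{i\alpha_{1,2}}$ to the far right of the inverted denominator, the two leftover factors $i$ and $-i$ being scalars that cancel.

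The step I expect to be the main obstacle is not this algebra but the operator-theoretic bookkeeping surrounding it: the self-adjoint operator $\alpha_{1,2}$, and hence $\tan(\alpha_{1,2})$ and $e^{\pm i \alpha_{1,2}}$, may be unbounded, and correspondingly $P_{1,2}(z)|_{\cN_+}$ in general has only dense (not closed) range, cf.\ \eqref{2.64}; thus the various inverses in \eqref{2.67}, \eqref{2.76}, and \eqref{2.80} must be read through the unbounded self-adjoint functional calculus and the cancellations justified on suitable dense domains. This is handled exactly as in the Saakjan--Krein resolvent formalism of \cite{GMT98}, the crucial invertibility of $\tan(\alpha_{1,2}) - M_{S_1,\cN_+}^{D}(z)$ for $z \in \bbC \backslash \bbR$ being guaranteed by the strict positivity of $[\Im(z)]^{-1} \Im\big(M_{S_1,\cN_+}^{D}(z)\big)$ established in Theorem~\ref{t3.3}.
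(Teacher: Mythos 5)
The paper itself offers no proof of Theorem \ref{t3.4}: it is explicitly ``recalled'' from \cite{GMT98}, so there is nothing internal to compare your argument against line by line. Your reconstruction is, however, correct and is in substance the argument of \cite{GMT98}: subtracting the two instances of \eqref{2.66} and inserting Krein's formula \eqref{2.69}, using \eqref{2.63} to write $P_{1,2}(z)=P_{\cN_+}\big(P_{1,2}(z)|_{\cN_+}\big)P_{\cN_+}$, and evaluating $R_\mp(z)=(z\pm i)^{-1}(M_1\pm iI_{\cN_+})$ so that the factor $1+z^2$ cancels, does yield the intermediate identity $M_2=M_1+(M_1+iI_{\cN_+})\big(P_{1,2}(z)|_{\cN_+}\big)(M_1-iI_{\cN_+})$; I checked that eliminating $P_{1,2}(z)|_{\cN_+}$ via \eqref{2.67} and the observation $B-AM_1=I_{\cN_+}-A(M_1-iI_{\cN_+})$ (whence $A(M_1-iI_{\cN_+})$ commutes with $B-AM_1$) collapses this to $[A+BM_1][B-AM_1]^{-1}$, i.e.\ \eqref{2.76}, and that the functional-calculus substitutions $A=ie^{-i\alpha_{1,2}}\cos(\alpha_{1,2})$, $B=ie^{-i\alpha_{1,2}}\sin(\alpha_{1,2})$ give \eqref{2.80}. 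The one place where your write-up is a sketch rather than a proof is exactly the point you flag yourself: $\big(P_{1,2}(z)|_{\cN_+}\big)^{-1}$, $A^{-1}$, and $\tan(\alpha_{1,2})$ are in general unbounded (dense, non-closed range, cf.\ \eqref{2.64}), so the passage from $A\big(P_{1,2}(z)|_{\cN_+}\big)^{-1}=B-AM_1$ on a dense domain to the bounded invertibility of $B-AM_1$ on all of $\cN_+$ requires the closure/extension arguments carried out in \cite{GMT98} (Lemma~4 and Theorem~5 there); deferring to that reference is legitimate here, but a self-contained proof would have to supply it.
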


Abstract Weyl--Titchmarsh operators of the type $M_{\wti S,\cN}^{D}(z)$ have attracted considerable attention in the literature. In the case $r=1$ they have been introduced by Donoghue \cite{Do65}. The interested reader can find a variety of additional results, for instance, in \cite{BMN17}, \cite{DM87}--\cite{DMT88}, \cite{KO77}, \cite{KO78}, \cite{Ma92a}, \cite{Ma92b}, and the references cited therein.

Next we recall a slight refinement of a result of Krein \cite{Kr47} (see also \cite{AT82}, \cite{Ts80}, \cite{Ts81}). We will use an efficient summary of Krein's result due to Skau \cite{Sk79} (cf.~also \cite{LT77}), in a version that appeared in \cite{GKMT01}:

\begin{theorem}\lb{t3.5}
Assume Hypothesis \ref{h2.1} and introduce the deficiency subspaces $\cN_\pm=\ker({S}^*\mp i I_{\cH})$. Suppose that $\wti S\ge 0$ is a self-adjoint extension of $S$ in $\cH $ with corresponding family of orthogonal spectral projections $\{E_{\wti S}(\lambda)\}_{\lambda\ge 0}$ and define
\begin{equation}
\Omega^D_{\wti S,\cN_+} (\lambda) = \big(1+\lambda^2\big)
\big(P_{\cN_+}E_{\wti S}(\lambda)P_{\cN_+}|_{\cN_+}\big).
\lb{2.81}
\end{equation}
Denote by $S_F$ and $S_K$ the Friedrichs
and Krein--von Neumann extension of $S$, respectively. Then \\[1mm] 
$(i)$ $\wti S=S_F$ if and only if $\int_R^\infty d \| E_{\wti S}(\lambda)u_+
\|^2_\cH \lambda=\infty$, or equivalently, if and only if 
$\int_R^\infty d(u_+, \Omega^D_{\wti S,\cN_+} (\lambda) u_+)_{\cN_+}\lambda^{-1}=\infty$
 for all $R>0$ and all $u_+\in \cN_+\backslash\{0\}$.   \\[1mm] 
$(ii)$ $\wti S=S_K$ if and only if $\int_0^R 
d \| E_{\wti S}(\lambda)u_+\vert\vert^2_\cH \lambda^{-1} =\infty$, or equivalently, 
if and only if  $\int_0^R d(u_+, \Omega^D_{\wti S,\cN_+}
(\lambda) u_+)_{\cN_+}\lambda^{-1}=\infty$ for all $R>0$ and all 
$u_+\in \cN_+\backslash\{0\}$.  \\[1mm] 
$(iii)$ $\wti S=S_F=S_K $ if and only if $\int_R^\infty d\| E_{\wti S}(\lambda)u_+
\|^2_\cH\lambda= \int_0^R d\| E_{\wti S}(\lambda)u_+ \|^2_\cH\lambda^{-1}=\infty$, 
 or equivalently, if and only if for all $R>0$ and all $u_+ \in \cN_+\backslash\{0\}$, \\
$\int_R^\infty d(u_+, \Omega^D_{\wti S,\cN_+} (\lambda) u_+)_{\cN_+}\lambda^{-1}=
\int_R^\infty d(u_+, \Omega^D_{\wti S,\cN_+} (\lambda) u_+)_{\cN_+}
\lambda^{-1}=\infty$. 
\end{theorem}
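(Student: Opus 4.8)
The plan is to reduce the characterization of $S_F$ and $S_K$ among nonnegative self-adjoint extensions of $S$ to a statement about the boundary behavior of the scalar Nevanlinna--Herglotz functions $z \mapsto (u_+, M_{\wti S, \cN_+}^D(z) u_+)_{\cN_+}$ (equivalently, about the integrability of their spectral measures $(u_+, d\Omega^D_{\wti S, \cN_+}(\lambda) u_+)_{\cN_+}$ near $\lambda = 0$ and near $\lambda = +\infty$), and then invoke the ordering $S_K \leq \wti S \leq S_F$ from Theorem \ref{t2.2}. First I would recall from Theorem \ref{t3.3} (applied with $\cN = \cN_+$) that for any nonnegative self-adjoint extension $\wti S$ of $S$, the function $M_{\wti S, \cN_+}^D(\dott)$ is a $\cB(\cN_+)$-valued Nevanlinna--Herglotz function with representation \eqref{2.72}, spectral measure \eqref{2.73} supported on $[0,\infty)$ (since $\wti S \geq 0$), and normalization \eqref{2.74}, \eqref{2.75}. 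The equivalence of the two formulations in each of $(i)$--$(iii)$ is then immediate from \eqref{2.81}: $d(u_+, \Omega^D_{\wti S,\cN_+}(\lambda) u_+)_{\cN_+} = (1+\lambda^2)\, d\|E_{\wti S}(\lambda) u_+\|_{\cH}^2$, so the two integrands differ by the bounded, strictly positive factor $(1+\lambda^2)$ on any bounded $\lambda$-interval and by a factor comparable to $\lambda^2$ near infinity, which is exactly absorbed by the extra power of $\lambda^{-1}$ versus $\lambda$; hence $\int_R^\infty \lambda\, d\|E u_+\|^2 = \infty \Leftrightarrow \int_R^\infty \lambda^{-1} d(u_+, \Omega^D u_+) = \infty$, and likewise $\int_0^R \lambda^{-1} d\|E u_+\|^2 = \infty \Leftrightarrow \int_0^R \lambda^{-1} d(u_+, \Omega^D u_+) = \infty$.

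The substance is the characterization itself, which I would extract from Krein's monotonicity principle. Using that $\wti S \mapsto M_{\wti S, \cN_+}^D(z)$ is (operator-)monotone along the lattice of nonnegative self-adjoint extensions of $S$, and that $S_K$ and $S_F$ are the minimal and maximal elements of this lattice by Theorem \ref{t2.2}, one obtains that for real $x < 0$ the scalar functions $x \mapsto (u_+, M_{S_K,\cN_+}^D(x) u_+)$ and $x \mapsto (u_+, M_{S_F,\cN_+}^D(x) u_+)$ are the extreme members of the corresponding family. For a scalar Herglotz function $m$ with measure $d\mu$ supported on $[0,\infty)$ and normalized by $\int d\mu(\lambda)(1+\lambda^2)^{-1} = 1$, one has the standard dichotomies: $\lim_{x \uparrow 0} m(x) = -\infty$ (the "Krein--von Neumann end") precisely when $\int_0^R \lambda^{-1} d\mu(\lambda) = \infty$, while $\lim_{x \to -\infty} m(x)/(-x)$ has a definite behavior — $m(x) = o(|x|)$ versus the "linear growth / $S_F$ end" — governed by whether $\int_R^\infty \lambda^{-1} d\mu(\lambda) < \infty$ or $= \infty$. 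The content of Skau's summary of Krein (as in \cite{Sk79}, \cite{LT77}, \cite{GKMT01}) is that among all nonnegative extensions, $S_F$ is singled out by the divergence $\int_R^\infty \lambda^{-1} d(u_+,\Omega^D u_+) = \infty$ for every $0 \neq u_+ \in \cN_+$ and every $R>0$, and $S_K$ by the divergence $\int_0^R \lambda^{-1} d(u_+,\Omega^D u_+) = \infty$ for every $0 \neq u_+ \in \cN_+$, $R>0$. I would present $(i)$ and $(ii)$ as direct transcriptions of these two facts, citing \cite{Kr47}, \cite{Sk79}, \cite{LT77}, \cite{GKMT01}, \cite{AT82}, \cite{Ts80}, \cite{Ts81}; then $(iii)$ follows by conjoining $(i)$ and $(ii)$, using that $S_F = S_K$ holds if and only if $S$ is essentially self-adjoint's negation fails, i.e.\ when the two extremal extensions coincide — which happens exactly when both divergence conditions hold simultaneously for every $u_+$.

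The main obstacle is to justify cleanly the passage between the operator-level extremality of $S_K$ and $S_F$ (Theorem \ref{t2.2}) and the scalar integrability criteria, i.e.\ establishing that the "minimality/maximality" in the sense of \eqref{PPa-1} translates, via the linear fractional correspondence \eqref{2.76}--\eqref{2.80} between Donoghue $M$-operators of distinct extensions and the Herglotz representation \eqref{2.72}--\eqref{2.75}, into the stated asymptotics of $(u_+, \Omega^D_{\wti S, \cN_+}(\dott) u_+)$ at the two endpoints $0$ and $\infty$ of $[0,\infty)$. Concretely, one must verify that for $\wti S$ strictly between $S_K$ and $S_F$ neither endpoint divergence can hold for all $u_+$, while at the extremes exactly one (for $S_F$: the divergence at $\infty$; for $S_K$: the divergence at $0$) is forced. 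Since, however, this is precisely the content of the cited theorem of Krein in Skau's formulation — and the excerpt explicitly says we may assume earlier results, here the parametrization and ordering in Theorems \ref{t2.2}, \ref{t2.8}, together with the Herglotz structure in Theorem \ref{t3.3} — I would keep the argument short: reduce to scalar Herglotz asymptotics via Theorem \ref{t3.3}, invoke the extremality of $S_K, S_F$ from Theorem \ref{t2.2} and the monotonicity of $M^D$, and cite Skau \cite{Sk79} (cf.\ \cite{LT77}) for the precise endpoint dichotomy, noting that the equivalent "$d\|E_{\wti S}(\lambda) u_+\|_{\cH}^2$" formulations follow from \eqref{2.81} exactly as above.
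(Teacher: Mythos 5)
The paper offers no proof of Theorem \ref{t3.5}: it is recalled from Krein \cite{Kr47} in Skau's formulation \cite{Sk79} (cf.\ \cite{LT77}), in the version that appeared in \cite{GKMT01}. Your strategy --- cite these sources for the endpoint dichotomy and verify only the equivalence of the two integral formulations in each item via \eqref{2.81} --- therefore matches the paper's treatment, and that verification is correct: $d(u_+,\Omega^D_{\wti S,\cN_+}(\lambda)u_+)_{\cN_+}=(1+\lambda^2)\,d\|E_{\wti S}(\lambda)u_+\|^2_{\cH}$, with $1+\lambda^2$ bounded above and below by positive constants on $(0,R]$ and comparable to $\lambda^2$ on $[R,\infty)$.

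One concrete slip in your heuristic should be fixed. For a Herglotz function with representation \eqref{2.72} whose (nonnegative) measure is supported on $[0,\infty)$, monotone convergence gives $\lim_{x\uparrow 0}m(x)=\int_{[0,\infty)}d\mu(\lambda)\,[\lambda(1+\lambda^2)]^{-1}\in(0,+\infty]$, so the Krein--von Neumann end is characterized by $\lim_{x\uparrow 0}m(x)=+\infty$, not $-\infty$; compare Corollary \ref{c3.6}\,$(ii)$. It is the Friedrichs end that is characterized by $\lim_{x\downarrow-\infty}m(x)=-\infty$ (Corollary \ref{c3.6}\,$(i)$); the dichotomy there concerns whether $\int^\infty\lambda(1+\lambda^2)^{-1}\,d\mu(\lambda)$ converges, not the growth rate $m(x)/(-x)$ (the representation \eqref{2.72} carries no linear term). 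Also, your appeal to essential self-adjointness in item $(iii)$ is out of place --- under Hypothesis \ref{h2.1} the deficiency indices are nonzero, yet $S_F=S_K$ may still occur --- but all that is needed there is the conjunction of $(i)$ and $(ii)$, which you also state. Since items $(i)$ and $(ii)$ are ultimately deferred to the cited literature, exactly as in the paper, these slips do not invalidate the proposed argument.
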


The following is a direct consequence of Theorem \ref{t3.5} (see also, \cite{DM91},
\cite{DM95}, \cite{DMT88}, \cite{KO78}, and \cite{Ts92}): 

\begin{corollary} \lb{c3.6} 
Assume Hypothesis \ref{h2.1} and introduce the deficiency subspaces 
$\cN_\pm=\ker({S}^*\mp i I_{\cH})$. Suppose that $\wti S\ge 0$ is a 
self-adjoint extension of $S$ in $\cH$. \\[1mm] 
$(i)$ $\wti S=S_F$ if and only if
$\lim_{\lambda\downarrow -\infty} \big(u_+, M_{\wti S,\cN_+}^{D}
(\lambda)u_+\big)_{\cN_+}=-\infty$ for all $u_+\in \cN_+\backslash\{0\}$. \\[1mm] 
$(ii)$ $\wti S=S_K$ if and only if
$\lim_{\lambda\uparrow 0} \big(u_+, M_{\wti S,\cN_+}^{D} (\lambda)u_+\big)_{\cN_+}= \infty$
for all $u_+\in \cN_+\backslash\{0\}$. \\[1mm] 
$(iii)$ $\wti S=S_F=S_K$ if and only if
$\lim_{\lambda\downarrow -\infty} \big(u_+, M_{\wti S,\cN_+}^{D}(\lambda)u_+\big)_{\cN_+}=-\infty$
and \\ $\lim_{\lambda\uparrow 0} \big(u_+, M_{\wti S,\cN_+}^{D} (\lambda)u_+\big)_{\cN_+}= \infty$
for all $u_+\in \cN_+\backslash \{0\}$. 
\end{corollary}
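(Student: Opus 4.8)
The plan is to fix $u_+\in\cN_+\setminus\{0\}$ and reduce all three assertions to a scalar computation driven by the finite positive Borel measure $d\rho_{u_+}(t):=d\|E_{\wti S}(t)u_+\|^2_\cH$, which by $\wti S\ge 0$ is supported on $[0,\infty)$ and has total mass $\rho_{u_+}([0,\infty))=\|u_+\|^2_\cH$. Starting from \eqref{2.66} (the right-hand side of which makes sense for $\lambda\in(-\infty,0)\subset\rho(\wti S)$ since $\wti S\ge 0$) together with the spectral theorem, I would first record the identity
\[
\big(u_+,M_{\wti S,\cN_+}^{D}(\lambda)u_+\big)_{\cN_+}=\lambda\,\|u_+\|^2_\cH+\big(1+\lambda^2\big)\int_{[0,\infty)}\frac{d\rho_{u_+}(t)}{t-\lambda},\qquad \lambda<0 .
\]
By Theorem \ref{t3.3} the map $\lambda\mapsto\big(u_+,M_{\wti S,\cN_+}^{D}(\lambda)u_+\big)_{\cN_+}$ is a scalar Nevanlinna--Herglotz function, hence real-analytic and nondecreasing on $(-\infty,0)$; therefore both one-sided limits in the statement exist in $[-\infty,\infty]$, and it remains only to evaluate them.

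For the limit as $\lambda\uparrow 0$ I would invoke monotone convergence: $\lambda\|u_+\|^2_\cH\to 0$, $1+\lambda^2\to 1$, and $(t-\lambda)^{-1}\uparrow t^{-1}$ for every $t\ge 0$ (with $1/0:=+\infty$), so the limit equals $\int_{[0,\infty)}t^{-1}\,d\rho_{u_+}(t)$. Since $\int_{[R,\infty)}t^{-1}\,d\rho_{u_+}\le R^{-1}\|u_+\|^2_\cH<\infty$ for each $R>0$, this integral is infinite if and only if $\int_{[0,R]}t^{-1}\,d\rho_{u_+}=\infty$ for one (equivalently, for every) $R>0$; by Theorem \ref{t3.5}(ii) the latter holds for all $u_+\in\cN_+\setminus\{0\}$ exactly when $\wti S=S_K$. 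This gives assertion (ii).

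For the limit as $\lambda=-s\downarrow-\infty$ I would rewrite $(t+s)^{-1}=s^{-1}-t\,s^{-1}(t+s)^{-1}$, insert the total mass identity $\int d\rho_{u_+}=\|u_+\|^2_\cH$, and cancel to obtain
\[
\big(u_+,M_{\wti S,\cN_+}^{D}(-s)u_+\big)_{\cN_+}=\frac{\|u_+\|^2_\cH}{s}-\int_{[0,\infty)}\frac{\big(1+s^2\big)t}{s(t+s)}\,d\rho_{u_+}(t).
\]
The integrand is nonnegative, is bounded above by $2t$ whenever $s\ge 1$, and converges to $t$ pointwise as $s\to\infty$; hence the integral tends to $\int_{[0,\infty)}t\,d\rho_{u_+}(t)$, by dominated convergence if this integral is finite and by Fatou's lemma if it is infinite. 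Consequently $\lim_{\lambda\downarrow-\infty}\big(u_+,M_{\wti S,\cN_+}^{D}(\lambda)u_+\big)_{\cN_+}=-\int_{[0,\infty)}t\,d\rho_{u_+}(t)$, which is $-\infty$ iff $\int_{[R,\infty)}t\,d\rho_{u_+}=\infty$ (the part on $[0,R]$ being at most $R\|u_+\|^2_\cH$); by Theorem \ref{t3.5}(i) this happens for all $u_+\in\cN_+\setminus\{0\}$ exactly when $\wti S=S_F$, which is (i). Finally, (iii) is the conjunction of (i) and (ii), and also follows at once from Theorem \ref{t3.5}(iii).

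The only step that needs real care is the passage to the limit under the integral sign as $\lambda\downarrow-\infty$: since $\int_{[0,\infty)}t\,d\rho_{u_+}(t)$ may be $+\infty$, dominated convergence alone is not available and one must split into the finite and infinite cases, using the $s$-uniform majorant $2t$ (valid for $s\ge 1$) in the former and Fatou's lemma in the latter. Everything else --- Herglotz monotonicity to secure existence of the limits, monotone convergence at the endpoint $\lambda\uparrow 0$, and the elementary finiteness of the tail integrals $\int_{[R,\infty)}t^{-1}\,d\rho_{u_+}$ and $\int_{[0,R]}t\,d\rho_{u_+}$ that lets one replace Theorem \ref{t3.5}'s ``for all $R>0$'' conditions by conditions on the integrals over all of $[0,\infty)$ --- is routine.
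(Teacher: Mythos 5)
Your proposal is correct and follows exactly the route the paper intends: the paper offers no written proof, simply declaring Corollary \ref{c3.6} a ``direct consequence of Theorem \ref{t3.5}'', and your spectral-measure computation of $\big(u_+,M^{D}_{\wti S,\cN_+}(\lambda)u_+\big)_{\cN_+}$ on $(-\infty,0)$ is precisely the standard way to reduce the two boundary limits to the divergence conditions of Theorem \ref{t3.5}\,$(i)$,$(ii)$. The algebraic identities, the monotone-convergence step at $\lambda\uparrow 0$, and the dominated-convergence/Fatou dichotomy at $\lambda\to-\infty$ all check out.
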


Since $S_K$ and $S_F$ are relatively prime by Lemma \ref{l2.9}, Theorem \ref{t3.2} applies 
and we obtain the following version of Krein's formula connecting the resolvents of $S_K$ and $S_F$: 

\begin{theorem}\lb{t3.7}
Assume Hypothesis \ref{h2.7} and recall the deficiency subspace 
$\cN_+ = \ker({S}^* - i I_{\cH})$. Then 
\begin{align}
(S_K-z I_{\cH})^{-1} &=(S_F-z I_{\cH})^{-1}   \no \\ 
& \quad +(S_F-i I_{\cH})(S_F-z I_{\cH})^{-1} P_{\cN_+}
\big[M_{S_F,\cN_+}^{D}(0) - M_{S_F,\cN_+}^{D}(z)\big]^{-1}  \no   \\
&\qquad \times P_{\cN_+}(S_F+i I_{\cH})(S_F-z I_{\cH})^{-1},  \quad 
z \in \rho(S_K)\cap\rho(S_F).   \lb{3.1}
\end{align}
\end{theorem}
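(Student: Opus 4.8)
The plan is to specialize the general Krein-type resolvent formula of Theorem~\ref{t3.2} to the pair $(S_1,S_2) = (S_F, S_K)$. By Lemma~\ref{l2.9}, $S_F$ and $S_K$ are relatively prime under Hypothesis~\ref{h2.7}, so Theorem~\ref{t3.2} applies directly with $z\in\rho(S_K)\cap\rho(S_F)$. Using formula \eqref{2.70} of Theorem~\ref{t3.2} with $S_1 = S_F$, $S_2 = S_K$, we obtain
\begin{align*}
(S_K - zI_{\cH})^{-1} &= (S_F - zI_{\cH})^{-1} + (S_F - iI_{\cH})(S_F - zI_{\cH})^{-1} P_{\cN_+} \\
& \quad \times \big[\tan(\alpha_{F,K}) - M_{S_F,\cN_+}^D(z)\big]^{-1} P_{\cN_+}(S_F + iI_{\cH})(S_F - zI_{\cH})^{-1},
\end{align*}
where $\alpha_{F,K}$ is the self-adjoint operator in $\cN_+$ associated with the pair $(S_F,S_K)$ via $e^{-2i\alpha_{F,K}} = -C_{S_K}(C_{S_F})^{-1}\big|_{\cN_+}$, as in \eqref{2.68}. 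Thus the entire content of the theorem reduces to identifying the boundary parameter: I must show that $\tan(\alpha_{F,K}) = M_{S_F,\cN_+}^D(0)$, at least in the appropriate (possibly unbounded, self-adjoint) operator sense on $\cN_+$.

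The key step is therefore the identification $\tan(\alpha_{F,K}) = M_{S_F,\cN_+}^D(0)$. First I would note that under Hypothesis~\ref{h2.7}, $S_F \geq \varepsilon I_{\cH}$ is boundedly invertible, so $0 \in \rho(S_F)$ and the Donoghue $M$-operator $M_{S_F,\cN_+}^D(z)$ of \eqref{2.66} extends analytically (as a $\cB(\cN_+)$-valued function, in fact self-adjoint-valued) to a neighborhood of $z = 0$; evaluating \eqref{2.66} at $z=0$ gives $M_{S_F,\cN_+}^D(0) = P_{\cN_+}(S_F)^{-1}P_{\cN_+}\big|_{\cN_+} \geq 0$, a bounded nonnegative operator. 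Next, from \eqref{2.67} evaluated at a point where it is valid, we have $\big(P_{F,K}(z)|_{\cN_+}\big)^{-1} = \tan(\alpha_{F,K}) - M_{S_F,\cN_+}^D(z)$, so it suffices to show $\big(P_{F,K}(z)|_{\cN_+}\big)^{-1} = M_{S_F,\cN_+}^D(0) - M_{S_F,\cN_+}^D(z)$, equivalently that the $z$-independent operator $\tan(\alpha_{F,K})$ equals $M_{S_F,\cN_+}^D(0)$. To pin this down, I would use the intrinsic characterization of $S_K$ via the order relation $S_K \leq S_F$ of Theorem~\ref{t2.2}, or equivalently Corollary~\ref{c3.6}$(ii)$: $\wti S = S_K$ iff $\lim_{\lambda\uparrow 0}(u_+, M_{\wti S,\cN_+}^D(\lambda)u_+)_{\cN_+} = \infty$ for all $u_+\in\cN_+\backslash\{0\}$. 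Applying the linear fractional transformation \eqref{2.80} of Theorem~\ref{t3.4} with $S_1 = S_F$, $S_2 = S_K$, the condition that $M_{S_K,\cN_+}^D(\lambda)$ blow up as $\lambda\uparrow 0$ forces the denominator $\sin(\alpha_{F,K}) - \cos(\alpha_{F,K})M_{S_F,\cN_+}^D(\lambda)$ to become singular precisely in the limit $\lambda\uparrow 0$, i.e.\ $\tan(\alpha_{F,K}) = \lim_{\lambda\uparrow 0} M_{S_F,\cN_+}^D(\lambda) = M_{S_F,\cN_+}^D(0)$ by continuity of the $M$-operator at $0$.

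Alternatively, and perhaps more cleanly, I would verify $\tan(\alpha_{F,K}) = M_{S_F,\cN_+}^D(0)$ by a direct comparison: the operator $P_{F,K}(z)|_{\cN_+}$ of \eqref{2.62} can be computed explicitly using the second resolvent identity and the known resolvent difference $(S_K - zI_{\cH})^{-1} - (S_F - zI_{\cH})^{-1}$, which in turn is governed by $\dom(S_K) = \dom(S)\dotplus\ker(S^*)$ versus $\dom(S_F) = \dom(S)\dotplus (S_F)^{-1}\ker(S^*)$ from \eqref{SF}, \eqref{SK}. Tracking the action of $U_{S_F,z,0}\colon \ker(S^*) \to \ker(S^* - zI_{\cH})$ from \eqref{e3} on the defect space and using \eqref{e1}, one finds that $P_{F,K}(z)$ is precisely inverted by $M_{S_F,\cN_+}^D(0) - M_{S_F,\cN_+}^D(z)$; I expect this computation to hinge on the identity $U_{S_F,z,0} = I_{\cH} + z(S_F - zI_{\cH})^{-1} = (S_F)(S_F - zI_{\cH})^{-1}$ from \eqref{e1} together with $M_{S_F,\cN_+}^D(0) = P_{\cN_+}(S_F)^{-1}P_{\cN_+}|_{\cN_+}$. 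The main obstacle will be handling the operator-theoretic subtleties when $r = \infty$ and $\alpha_{F,K}$ (hence $\tan(\alpha_{F,K})$, $M_{S_F,\cN_+}^D(0)$) may be unbounded: one must justify the manipulations of \eqref{2.67}, \eqref{2.80} on the appropriate dense domains and confirm that $[M_{S_F,\cN_+}^D(0) - M_{S_F,\cN_+}^D(z)]^{-1} \in \cB(\cN_+)$, which follows from Theorem~\ref{t3.3} since $\Im(M_{S_F,\cN_+}^D(z))$ is boundedly invertible for $z\in\bbC\backslash\bbR$ and $M_{S_F,\cN_+}^D(0)$ is real symmetric, so the difference has boundedly invertible imaginary part for non-real $z$, and a limiting/analytic-continuation argument covers real $z\in\rho(S_K)\cap\rho(S_F)$.
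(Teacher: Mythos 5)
Your proposal is correct and follows essentially the same route as the paper: both specialize formula \eqref{2.70} of Theorem \ref{t3.2} to the relatively prime pair $(S_F,S_K)$ (via Lemma \ref{l2.9}) and then identify $\tan(\alpha_{1,2})=M_{S_F,\cN_+}^{D}(0)$ using $(S_F)^{-1}\in\cB(\cH)$, the definition \eqref{2.66}, and Corollary \ref{c3.6}\,$(ii)$. Your write-up merely fills in the details (via the linear fractional transformation \eqref{2.80}) that the paper compresses into ``make it plain.''
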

\begin{proof}
By the result \eqref{2.70} (with $S_1=S_K$ and $S_2=S_F$), one only needs to identify the operator $\tan(\alpha_{1,2})$ in the present context. But 
$(S_F)^{-1} \in\cB(\cH)$ and 
\begin{equation} 
\dim(\ker(S_K)) = \dim(\ker(S^*)) = \dim(\cN_+) = r\in\bbN \cup \{\infty\}
\end{equation} 
(cf.\ \eqref{dim}), equation \eqref{2.66}, and Corollary \ref{c3.6}\,$(ii)$, make 
it plain that $\tan(\alpha_{1,2}) = M_{S_F,\cN_+}^{D}(0)$, implying \eqref{3.1}. 
\end{proof}

We note that Nenciu \cite{Ne83} hints at formula \eqref{3.1} (cf.\ his comment following the proof of Lemma 3 in \cite{Ne83}). In the case of a nonnegative half-line Schr\"odinger operators with deficiency indices (1, 1), the boundary condition characterizing the Krein--von Neumann extension was identified in terms of the Weyl--Titchmarsh function by Derkach, Malamud, Tsekanovskii in \cite{DMT88}, \cite{DMT89}. We also note that \cite{Ts92} yields the analog of \eqref{3.1} in this case. Furthermore, a general description of the Krein--von Neumann extension in terms of boundary triplets is provided in \cite{DMT89}. In this context, see also \cite[Ch.~6]{BHS20} and \cite[Ch.~6, Sect.~D.7]{GNZ24}.

Interchanging the role of $S_F$ and $S_K$ one obtains the following result:

\begin{theorem} \lb{t3.8} 
 Assume Hypothesis \ref{h2.7} and recall the deficiency subspace 
$\cN_+=\ker({S}^* - i I_{\cH})$. Then 
\begin{align}
& (S_F-z I_{\cH})^{-1} = (S_K-z I_{\cH})^{-1}   \no \\ 
& \quad +(S_K-i I_{\cH})(S_K-z I_{\cH})^{-1} P_{\cN_+}
\big[-M_{S_F,\cN_+}^{D}(0) - M_{S_K,\cN_+}^{D}(z)\big]^{-1}    \lb{3.16} \\
&\qquad \times P_{\cN_+}(S_K+i I_{\cH})(S_K-z I_{\cH})^{-1},  \quad 
z \in \rho(S_K)\cap\rho(S_F).   \no 
\end{align}
\end{theorem}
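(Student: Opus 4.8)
The plan is to apply Theorem \ref{t3.2} with the roles of the two relatively prime self-adjoint extensions reversed, taking $S_1 = S_F$ and $S_2 = S_K$ in the first instance to set up notation, but then re-deriving the formula with $S_1 = S_K$ and $S_2 = S_F$ so that the resolvent of $S_K$ appears on the outside. Since $S_F$ and $S_K$ are relatively prime by Lemma \ref{l2.9}, formula \eqref{2.70} is available with $(S_1,S_2) = (S_K,S_F)$, yielding
\begin{align}
(S_F-z I_{\cH})^{-1} &= (S_K-z I_{\cH})^{-1} + (S_K - i I_{\cH})(S_K - z I_{\cH})^{-1} P_{\cN_+} \no \\
& \quad \times \big[\tan(\alpha_{2,1}) - M_{S_K,\cN_+}^{D}(z)\big]^{-1} P_{\cN_+}(S_K + i I_{\cH})(S_K - z I_{\cH})^{-1}, \no
\end{align}
for $z \in \rho(S_K)\cap\rho(S_F)$, where $\alpha_{2,1}$ is the self-adjoint operator in $\cN_+$ associated (via \eqref{2.79}) with the pair $(S_2,S_1) = (S_F,S_K)$ in the reversed order relative to Theorem \ref{t3.7}. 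So the task reduces entirely to identifying the constant operator $\tan(\alpha_{2,1})$ in terms of the Donoghue $M$-operators already named in the statement, namely showing $\tan(\alpha_{2,1}) = -M_{S_F,\cN_+}^{D}(0)$.

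To do this I would exploit the antisymmetry of the parameter $\alpha_{1,2}$ under interchanging the two extensions. From \eqref{2.65} and \eqref{2.79}, $e^{-2i\alpha_{1,2}} = -C_{S_2}(C_{S_1})^{-1}|_{\cN_+}$, so swapping $S_1 \leftrightarrow S_2$ replaces $C_{S_2}(C_{S_1})^{-1}$ by its inverse $C_{S_1}(C_{S_2})^{-1}$, hence replaces $e^{-2i\alpha}$ by $e^{2i\alpha}$, i.e.\ $\alpha_{2,1} = -\alpha_{1,2}$ (as self-adjoint operators in $\cN_+$, modulo the usual care with the functional calculus), and therefore $\tan(\alpha_{2,1}) = -\tan(\alpha_{1,2})$. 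In the proof of Theorem \ref{t3.7} it was established, via \eqref{2.66}, Corollary \ref{c3.6}\,$(ii)$, and the bounded invertibility of $S_F$, that for the pair $(S_1,S_2) = (S_K,S_F)$ one has $\tan(\alpha_{1,2}) = M_{S_F,\cN_+}^{D}(0)$. Consequently $\tan(\alpha_{2,1}) = -M_{S_F,\cN_+}^{D}(0)$, and substituting this into the displayed formula gives exactly \eqref{3.16}.

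The main obstacle is the careful bookkeeping of which ordered pair $(S_i,S_j)$ the operator $\alpha$ refers to in each invocation, together with the legitimacy of manipulating the (possibly unbounded) self-adjoint operator $\alpha_{1,2}$ through the relations $e^{-2i\alpha_{1,2}} = -C_{S_2}(C_{S_1})^{-1}|_{\cN_+}$: one must check that the $z=0$ limit used to pin down $\tan(\alpha_{1,2})$ in Theorem \ref{t3.7} (Corollary \ref{c3.6}\,$(ii)$, which characterizes $S_K$ by $\lim_{\lambda\uparrow 0}(u_+, M^D_{S_K,\cN_+}(\lambda)u_+)_{\cN_+} = \infty$) is compatible with the sign flip, and that $\tan(\alpha_{2,1})$ is still a well-defined (densely defined) self-adjoint operator whose inverse-bracket $[\tan(\alpha_{2,1}) - M^D_{S_K,\cN_+}(z)]^{-1}$ makes sense in $\cB(\cN_+)$ for $z \in \rho(S_K)\cap\rho(S_F)$. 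This invertibility is guaranteed because $M^D_{S_K,\cN_+}(z)$ has boundedly invertible imaginary part for $z \in \bbC\setminus\bbR$ (Theorem \ref{t3.3}, \eqref{2.71A}), and for real $z \in \rho(S_K)\cap\rho(S_F)\cap\bbR$ the invertibility follows from the fact that \eqref{3.16} must reproduce the genuine resolvent $(S_F - zI_{\cH})^{-1}$ — equivalently, one can invoke the general theory behind Theorem \ref{t3.2} itself, where this point was already handled. Everything else is a direct substitution.
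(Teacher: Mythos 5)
Your proposal is correct and follows essentially the same route as the paper's proof: apply Krein's formula \eqref{2.70} with the roles of $S_F$ and $S_K$ interchanged, and identify the new constant as $\tan(\alpha_{K,F}) = -\tan(\alpha_{F,K}) = -M_{S_F,\cN_+}^{D}(0)$ by observing that swapping the ordered pair inverts the Cayley-transform quotient and hence negates $\alpha$. The one detail the paper spells out that you elide is the verification (via \eqref{e3}) that $\cN_+$ is invariant under $C_{S_F}(C_{S_K})^{-1}$, which is what legitimizes the identity $\big(C_{S_F}(C_{S_K})^{-1}\big|_{\cN_+}\big)^{-1} = C_{S_K}(C_{S_F})^{-1}\big|_{\cN_+}$ underlying the sign flip.
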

\begin{proof}
 Recalling the notation \eqref{2.60}, we first prove the relation 
 \begin{equation}
  C_{S_K}(C_{S_F})^{-1} \big|_{\cN _{+}} = \big(C_{S_F} (C_{S_K})^{-1}
  \big|_{\cN _{+}}\big)^{-1}.     \lb{3.17}
 \end{equation}
Indeed, one notes that the linear subspace $\cN_{+}$ is left invariant by the bounded operator 
$C_{S_F}(C_{S_K})^{-1}$ as 
\begin{equation} 
C_{S_F}(C_{S_K})^{-1} |_{\cN _{+}} = U_{{S_F},i,-i}U_{{S_K},-i,i} |_{\cN _{+}},
\end{equation} 
and hence using formula \eqref{e3}, one concludes that 
$C_{S_F}(C_{S_K})^{-1} \cN _{+} = \cN_{+}$, implying 
\eqref{3.17}. Since $C_{S_F}(C_{S_K})^{-1}\big|_{\cN _{+}}$ 
and $C_{S_K}(C_{S_F})^{-1}\big|_{\cN _{+}}$ are unitary, there exist two self-adjoint
operators $\alpha_{K,F}$ and $\alpha_{F,K}$ in $\cN_+$ such that 
\begin{equation}
  C_{S_F}(C_{S_K})^{-1}\big|_{\cN _{+}}=-e^{-2i\alpha_{K,F}}, \quad 
  C_{S_K}(C_{S_F})^{-1}\big|_{\cN _{+}}=-e^{-2i\alpha_{F,K}}.    \lb{3.19} 
\end{equation}
Moreover, $\tan(\alpha_{F,K}) = M_{S_F,\cN_+}^{D}(0)$ together with \eqref{3.17} imply  
\begin{equation}
\tan(\alpha_{K,F}) = - M_{S_F,\cN_+}^{D}(0) = - \tan(\alpha_{F,K}).    \lb{3.20}
\end{equation}
Formula \eqref{3.16} thus follows from \eqref{2.70} and \eqref{3.20}. 
\end{proof}

We also mention the following fact:

\begin{lemma} \lb{l3.9} 
 Assume Hypothesis \ref{h2.7}. Then for every $u_+ \in\ker(S^*-iI_{\cH}) = \cN_+$ one has 
 \begin{equation} 
 \slim_{z\rightarrow 0}zU_{{S_K},z,i} u_+ =iP_{\ker(S^*)} u_+.   \lb{3.21} 
 \end{equation} 
\end{lemma}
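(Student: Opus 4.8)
The plan is to compute $zU_{S_K,z,i}u_+$ directly from the defining formula \eqref{e1} and let $z\to 0$ along $\bbC\backslash[0,\infty)$. Recall from \eqref{e1} that
\begin{equation*}
U_{S_K,z,i} = \big(S_K - i I_{\cH}\big)\big(S_K - z I_{\cH}\big)^{-1},
\end{equation*}
so that $zU_{S_K,z,i}u_+ = z\big(S_K - i I_{\cH}\big)\big(S_K - z I_{\cH}\big)^{-1}u_+$. The key observation is that $S_K$ is nonnegative with nontrivial kernel $\ker(S_K) = \ker(S^*)$ (cf.\ \eqref{Fr-4Tf}), so $0$ sits at the bottom of $\sigma(S_K)$ as an isolated-or-embedded eigenvalue, and the spectral calculus shows that $z(S_K - zI_{\cH})^{-1}$ converges strongly as $z\to 0$ to $-P_{\ker(S_K)} = -P_{\ker(S^*)}$. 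Indeed, writing $f_z(\lambda) := z/(\lambda - z)$ for $\lambda\in[0,\infty)$, one has $f_z(0) = -1$ and $f_z(\lambda)\to 0$ for each $\lambda > 0$, with $\abs{f_z(\lambda)}$ bounded uniformly in $z$ in any cone $\{z : \abs{\arg z} \ge \delta\} $ avoiding $[0,\infty)$; hence by dominated convergence in the spectral representation, $\slim_{z\to 0} z(S_K - zI_{\cH})^{-1} = -E_{S_K}(\{0\}) = -P_{\ker(S^*)}$.

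From here the argument is short. Apply the bounded operator $(S_K - iI_{\cH})$, which is continuous, to obtain
\begin{equation*}
\slim_{z\to 0} zU_{S_K,z,i}u_+ = \slim_{z\to 0}(S_K - iI_{\cH})\,z(S_K - zI_{\cH})^{-1}u_+ = -(S_K - iI_{\cH})P_{\ker(S^*)}u_+.
\end{equation*}
Since $P_{\ker(S^*)}u_+ \in \ker(S_K)$, we have $S_K P_{\ker(S^*)}u_+ = 0$, so $-(S_K - iI_{\cH})P_{\ker(S^*)}u_+ = iP_{\ker(S^*)}u_+$, which is exactly \eqref{3.21}. (One subtlety worth a remark: strictly speaking one should check that $(S_K - iI_{\cH})$ commutes past the strong limit even though it is unbounded; but here we only apply it to the single vector $P_{\ker(S^*)}u_+$, on which $S_K$ acts as $0$, so no domain issue arises — equivalently, one may note $z(S_K - iI_{\cH})(S_K - zI_{\cH})^{-1} = z(S_K - zI_{\cH})^{-1}(S_K - iI_{\cH})$ is not needed; instead use $(S_K - iI_{\cH})(S_K - zI_{\cH})^{-1} = I_{\cH} + (z - i)(S_K - zI_{\cH})^{-1}$, multiply by $z$, and take the limit of each term: $zI_{\cH}\to 0$ and $z(z-i)(S_K - zI_{\cH})^{-1} \to -i\,(-P_{\ker(S^*)})\cdot(-1)$... let me instead phrase it cleanly below.)

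A cleaner route avoiding the unbounded operator altogether: use the resolvent identity $(S_K - iI_{\cH})(S_K - zI_{\cH})^{-1} = I_{\cH} + (z-i)(S_K - zI_{\cH})^{-1}$, so that
\begin{equation*}
zU_{S_K,z,i}u_+ = zu_+ + z(z-i)(S_K - zI_{\cH})^{-1}u_+.
\end{equation*}
As $z\to 0$, the first term vanishes, and the second satisfies $z(z-i)(S_K - zI_{\cH})^{-1}u_+ = (z-i)\big[z(S_K - zI_{\cH})^{-1}u_+\big] \to (-i)(-P_{\ker(S^*)}u_+) = iP_{\ker(S^*)}u_+$, using the strong limit established above. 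This yields \eqref{3.21}. The only real point requiring care is the justification of $\slim_{z\to 0}z(S_K - zI_{\cH})^{-1} = -P_{\ker(S^*)}$ — i.e., the dominated-convergence estimate in the spectral integral, for which one needs $z$ to approach $0$ nontangentially to $[0,\infty)$ so that $\abs{z}/\abs{\lambda - z}$ stays bounded; this is implicit in the notation since $0\in\rho(S_F)$-type resolvents are being used and $z$ ranges over $\bbC\backslash[0,\infty)$ as in \eqref{e1}.
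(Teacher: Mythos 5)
Your argument is correct and lands on the same algebraic identity the paper uses, namely $zU_{S_K,z,i}u_+ = zu_+ + z(z-i)(S_K - zI_{\cH})^{-1}u_+$, so the whole lemma reduces to the single claim $\slim_{z\to 0} z(S_K - zI_{\cH})^{-1} = -P_{\ker(S_K)}$. Where you diverge is in how you justify that claim: you invoke the spectral theorem and a dominated-convergence argument for $f_z(\lambda) = z/(\lambda - z)$, whereas the paper expands $(S_K - zI_{\cH})^{-1}$ in a Laurent series about $z=0$, $(S_K - zI_{\cH})^{-1} = -z^{-1}P_{\ker(S^*)} + \sum_{n\ge 0} z^n \big(\hatt S_K\big)^{-n-1}$, and reads off the limit term by term. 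The two are close in spirit, but note that the Laurent expansion is only available because $0$ is an \emph{isolated} point of $\sigma(S_K)$ under Hypothesis \ref{h2.7}: by \eqref{Barr-5} one has $\hatt S_K \ge \varepsilon I_{\hatt\cH}$, so $\sigma(S_K) \subseteq \{0\} \cup [\varepsilon,\infty)$. This same observation resolves the one loose end in your write-up: you restrict $z\to 0$ to nontangential approach in order to bound $|f_z(\lambda)|$ uniformly, but the lemma imposes no such restriction. Because of the spectral gap $(0,\varepsilon)$, for $|z| < \varepsilon/2$ one has $|f_z(\lambda)| \le 2|z|/\varepsilon$ uniformly for $\lambda \ge \varepsilon$ while $f_z(0) = -1$, so the strong limit holds for $z \to 0$ unrestrictedly within $\rho(S_K)$ and the nontangential caveat can be dropped. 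With that remark inserted, your proof is complete; its advantage is that (with the nontangential restriction retained) it would apply to any nonnegative self-adjoint extension even without a gap at the bottom of the spectrum, whereas the paper's Laurent-series route is tied to Hypothesis \ref{h2.7} but gives the limit, and indeed the full asymptotic expansion, with no restriction on how $z$ approaches $0$. I would also suggest deleting your first, abandoned attempt (applying $S_K - iI_{\cH}$ to the limit), since the ``cleaner route'' is the actual proof and the preceding paragraph only adds noise.
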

\begin{proof}
Let $u_+ \in \ker(S^*-iI_{\cH})$ then $U_{{S_K},z,i} u_+ = u_+ +(z-i)(S_K - z I_{\cH})^{-1} u_+$. The Laurent 
expansion of the resolvent $(S_K - z I_{\cH})^{-1}$ around $z = 0$ is of the form 
\begin{equation} 
(S_K - z I_{\cH})^{-1} = - \frac{P_{\ker(S^*)}}{z}+\sum\limits_{n=0}^{\infty}  z^n 
\big({\hatt{S}_K}\big)^{-n-1}, \quad P_{\ker(S_K)} = P_{\ker(S^*)}. 
\lb{3.29} 
\end{equation} 
Using \eqref{3.29} and the definition of $U_{{S_K},z,i}$ one obtains 
\begin{equation} 
zU_{{S_K},z,i} u_+ = z u_+ - (z-i)P_{\ker(S^*)} u_+ + z(z-i)\sum\limits_{n=0}^{\infty} z^n 
\big({\hatt{S}_K}\big)^{-n-1} u_+,
\end{equation} 
and hence \eqref{3.21} as $z\rightarrow 0$.
\end{proof}

We conclude this section with a result inspired by \cite[Theorem~3.1]{GG91}:

\begin{theorem} \lb{t3.10}
Assume Hypothesis \ref{h3.1} and let $S_j$, $j =0,1,2$, be self-adjoint extensions of $S$ such that $S_1$ and $S_0$ as well as $S_2$ and $S_0$ are relatively prime. In addition, let $p \in (0,\infty]$. Then 
\begin{align}
&\big[(S_2 - z I_{\cH})^{-1} - (S_1 - z I_{\cH})^{-1}\big] \in \cB_p(\cH) \, \text{ for some}     \no \\ 
& \qquad \text{$($and hence for all\,$)$ $z \in \rho(S_1)\cap \rho(S_2)$}      \lb{3.38} \\
& \quad \text{if and only if } \, \big[e^{2i\alpha_{0,2}} - e^{2i\alpha_{0,1}}\big] \in \cB_p(\cN_+).    \no
\end{align} 
\end{theorem}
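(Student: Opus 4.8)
The plan is to reduce the statement to Theorem~\ref{t3.4} (the linear fractional transformation relating Donoghue-type $M$-operators) combined with Krein's resolvent formula \eqref{2.69}--\eqref{2.70} and the structure of $P_{1,2}(z)$ recorded in \eqref{2.62}--\eqref{2.68}. First I would fix a convenient reference point, say $z = i$, using the fact that the trace ideal membership in \eqref{3.38} is independent of $z \in \rho(S_1)\cap\rho(S_2)$; this independence follows from the first resolvent identity, since for $z, z' \in \rho(S_1)\cap\rho(S_2)$ the difference of the two resolvent differences is a product of bounded operators with one factor already known (by hypothesis) to lie in $\cB_p(\cH)$, and $\cB_p(\cH)$ is a two-sided ideal. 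With $z = i$ in hand, \eqref{2.69} shows that $\big[(S_2 - iI_{\cH})^{-1} - (S_1 - iI_{\cH})^{-1}\big] \in \cB_p(\cH)$ if and only if $P_{1,2}(i) \in \cB_p(\cH)$, because the flanking factors $(S_1 \mp iI_{\cH})(S_1 - iI_{\cH})^{-1}$ are bounded with bounded inverses.

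Next I would bring in $S_0$ as an intermediary. The key observation is that $P_{1,2}(i)|_{\cN_+}$, by \eqref{2.65} and \eqref{2.68}, is given (up to the bounded invertible factor $i/2$ and the identity) by $C_{S_2}(C_{S_1})^{-1}|_{\cN_+} = -e^{-2i\alpha_{1,2}}$, and the Cayley transform composition telescopes: $C_{S_2}(C_{S_1})^{-1} = \big(C_{S_2}(C_{S_0})^{-1}\big)\big(C_{S_0}(C_{S_1})^{-1}\big)$. Hence, using the definitions \eqref{2.68} applied to the pairs $(S_0, S_j)$, one gets $e^{-2i\alpha_{1,2}} = -\,e^{-2i\alpha_{0,2}}\,e^{2i\alpha_{0,1}}$ on $\cN_+$ (the signs being bookkeeping from the minus signs in $C_{S_j}(C_{S_0})^{-1}|_{\cN_+} = -e^{-2i\alpha_{0,j}}$). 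Therefore $P_{1,2}(i) \in \cB_p(\cH)$ if and only if $\big[I_{\cN_+} - e^{-2i\alpha_{0,2}} e^{2i\alpha_{0,1}}\big] \in \cB_p(\cN_+)$; multiplying on the left by the unitary $e^{2i\alpha_{0,2}}$ (which preserves $\cB_p$) converts this to $\big[e^{2i\alpha_{0,2}} - e^{2i\alpha_{0,1}}\big] \in \cB_p(\cN_+)$, which is exactly the right-hand side of \eqref{3.38}.

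Putting the two reductions together gives the equivalence. The only point requiring genuine care—and I expect it to be the main obstacle—is handling the possibly unbounded self-adjoint operators $\alpha_{0,1}$, $\alpha_{0,2}$ in $\cN_+$: the exponentials $e^{\pm 2i\alpha_{0,j}}$ are bounded unitaries (defined via the spectral theorem), so all the algebra above takes place among bounded operators, but one must verify that the composition identity $C_{S_2}(C_{S_1})^{-1}|_{\cN_+} = \big(C_{S_2}(C_{S_0})^{-1}|_{\cN_+}\big)\big(C_{S_0}(C_{S_1})^{-1}|_{\cN_+}\big)$ is legitimate, i.e., that each Cayley transform quotient leaves $\cN_+$ invariant. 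This is precisely the content of \eqref{e3} (each $U_{{S_j},z,z_0}$ maps $\ker(S^* - z_0 I_{\cH})$ onto $\ker(S^* - z I_{\cH})$), so that $C_{S_j}(C_{S_0})^{-1} = U_{{S_j},i,-i} U_{{S_0},-i,i}$ maps $\cN_+$ back to $\cN_+$; once invariance is established, the telescoping is automatic and the rest is ideal-theoretic bookkeeping. A secondary subtlety is that the relative primeness of $(S_1, S_0)$ and $(S_2, S_0)$ is what guarantees $P_{0,1}(z)$ and $P_{0,2}(z)$ are well defined with the stated properties via \eqref{2.62}, and one should note that relative primeness of $S_1$ and $S_2$ themselves is \emph{not} assumed—but it is not needed, since the entire argument routes through $S_0$ and the trace-ideal statement only involves resolvent differences that are automatically bounded.
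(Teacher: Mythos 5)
Your proposal is correct, and it reaches the same two pillars as the paper's proof (evaluation at $z=i$, reduction of the $z$-dependence via the first resolvent identity, and a final statement about unitaries on $\cN_+$), but the middle of the argument is organized differently. The paper never forms $P_{1,2}$ or $\alpha_{1,2}$ for the pair $(S_1,S_2)$ at all: it applies Theorem \ref{t3.2} at $z=i$ separately to the pairs $(S_1,S_0)$ and $(S_2,S_0)$, subtracts, uses $M^D_{S_0,\cN_+}(i)=iI_{\cN_+}$ to collapse each formula to $P_{\cN_+}[\tan(\alpha_{0,j})-iI_{\cN_+}]^{-1}P_{\cN_+}C_{S_0}$, and then rewrites the difference of the two inverses as $2i\,[e^{2i\alpha_{0,2}}+I_{\cN_+}]^{-1}[e^{2i\alpha_{0,2}}-e^{2i\alpha_{0,1}}][e^{2i\alpha_{0,1}}+I_{\cN_+}]^{-1}$, with the flanking factors bounded and boundedly invertible by relative primeness. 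You instead apply Krein's formula once to $(S_1,S_2)$ directly, reduce to $P_{1,2}(i)=(i/2)(I_{\cN_+}+e^{-2i\alpha_{1,2}})$ on $\cN_+$, and then bring in $S_0$ through the cocycle identity $C_{S_2}(C_{S_1})^{-1}=\big(C_{S_2}(C_{S_0})^{-1}\big)\big(C_{S_0}(C_{S_1})^{-1}\big)$ restricted to $\cN_+$, which yields $e^{-2i\alpha_{1,2}}=-e^{-2i\alpha_{0,2}}e^{2i\alpha_{0,1}}$ and hence the stated criterion after left multiplication by the unitary $e^{2i\alpha_{0,2}}$. Your route makes the role of $S_0$ as a mere pivot more transparent and avoids the $\tan$-algebra; the paper's route avoids any appeal to the invariance-and-telescoping argument for $C_{S_0}(C_{S_1})^{-1}|_{\cN_+}$ (which you correctly flag as the point needing care, and which is exactly the argument used in the paper's proof of Theorem \ref{t3.8}, cf.\ \eqref{3.17}).

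One small caveat: your closing remark that relative primeness of $S_1$ and $S_2$ ``is not needed, since the entire argument routes through $S_0$'' is slightly misleading, because your argument \emph{does} invoke $P_{1,2}(i)$ and the representation \eqref{2.65} for the pair $(S_1,S_2)$, and in the paper \eqref{2.63}--\eqref{2.65} are stated under the standing assumption that the pair is relatively prime. The correct justification is that the specific facts you use --- the identity \eqref{2.69} at $z=i$, $P_{1,2}(i)|_{\cN_+^{\bot}}=0$, and $P_{1,2}(i)|_{\cN_+}=(i/2)\big(I_{\cN_+}-C_{S_2}(C_{S_1})^{-1}|_{\cN_+}\big)$ with $C_{S_2}(C_{S_1})^{-1}|_{\cN_+}$ unitary on $\cN_+$ by \eqref{e3} --- are purely algebraic and hold for an arbitrary pair of self-adjoint extensions; relative primeness is only needed (for the pairs $(S_j,S_0)$) to have the operators $\alpha_{0,j}$ with the properties used. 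With that justification supplied, your proof is complete.
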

\begin{proof}
Applying Theorem \ref{t3.2} for $z=i$ to the pairs $(S_j,S_0)$, $j=1,2$, and subtracting the results yields upon noting that $M^D_{S_j,\cN_+}(i) = i I_{\cN_+}$, 
\begin{align}
& (S_2 - i I_{\cH})^{-1} - (S_1 - i I_{\cH})^{-1} = P_{\cN_+} \Big\{\big[\tan(\alpha_{0,2}) - i I_{\cN_+}\big]^{-1}  \no \\
& \qquad - \big[\tan(\alpha_{0,1}) - i I_{\cN_+}\big]^{-1} \Big\} P_{\cN_+} (S_0 + i I_{\cN_+}) S_0 - i I_{\cN_+})^{-1}    \no \\
& \quad = P_{\cN_+} \Big\{\big[\tan(\alpha_{0,2}) - i I_{\cN_+}\big]^{-1} - \big[\tan(\alpha_{0,1}) - i I_{\cN_+}\big]^{-1}\Big\}P_{\cN_+} C_{S_0}   \no \\
& \quad =  2 i P_{\cN_+} \big[e^{2i\alpha_{0,2}} + I_{\cN_+}\big]^{-1} \big[e^{2i\alpha_{0,2}} - e^{2i\alpha_{0,1}}\big] 
\big[e^{2i\alpha_{0,1}} + I_{\cN_+}\big]^{-1} P_{\cN_+} C_{S_0}.     \lb{3.39} 
\end{align} 
Since the Cayley transform $C_{S_0}$ is unitary and $S_j \neq S_0$ as $(S_j,S_0)$, $j=1,2$, are relatively prime, one concludes that 
\begin{equation} 
\alpha_{0,j} \neq (\pi/2) I_{\cN_+}, \quad j=1,2, 
\end{equation} 
and  
\begin{equation} 
\big[\tan(\alpha_{0,j}) - i I_{\cN_+}\big]^{-1}, \, 
\big[e^{2i\alpha_{0,j}} + I_{\cN_+}\big]^{-1}, \, \big[e^{2i\alpha_{0,j}} + I_{\cN_+}\big] \in \cB(\cN_+), \quad j=1,2.   
\end{equation} 
Thus, relation \eqref{3.39} completes the proof of \eqref{3.38} for $z=i$. The identity 
(cf., e.g., \cite[p.~178]{We80}) 
\begin{align}
& (S_2 - z I_{\cH})^{-1} - (S_1 - z I_{\cH})^{-1} = (S_2 - z_0 I_{\cH})(S_2 - z I_{\cH})^{-1}     \no \\
& \quad \times \big[(S_2 - z_0 I_{\cH})^{-1} - (S_1 - z_0 I_{\cH})^{-1}\big] (S_1 - z_0 I_{\cH})(S_1 - z I_{\cH})^{-1},   \\
& \hspace*{6.6cm} z, z_0 \in \rho(S_1) \cap \rho(S_2),    \no
\end{align}
implies that  
\begin{align}
\begin{split} 
& \big[(S_2 - z I_{\cH})^{-1} - (S_1 - z I_{\cH})^{-1}\big] \in \cB_p(\cH), \quad z \in \rho(S_1) \cap \rho(S_2),    \\
& \quad \text{if and only if } \big[(S_2 - i I_{\cH})^{-1} - (S_1 - i I_{\cH})^{-1}\big] \in \cB_p(\cH), 
\end{split} 
\end{align}
completing the argument. 
\end{proof}

\section{Some Comparisons between the Abstract Friedrichs and \\ Krein--von Neumann Extension}   \lb{s4}

In this section we draw certain comparisons between the resolvent of the abstract Friedrichs and that of the (reduced) Krein--von Neumann 
extension. 

As a warmup we start with the following extension of \eqref{COMPK}, part $(i)$ of which essentially appeared in \cite{AGMST10}:

\begin{theorem}\lb{t4.1}
Assume Hypothesis \ref{h2.7} and let $p\in (0,\infty]$. \\[1mm]
$(i)$ Then 
\begin{align}
& (S_F - z_0 I_{\cH})^{-1} \in \cB_p(\cH) 
\, \text{ for some $z_0\in \bbC\backslash [\varepsilon,\infty)$,}   \no \\
& \quad \text{implies}         \lb{4.1} \\
& (S_K - zI_{\cH})^{-1}[I_{\cH} - P_{\ker(S_K)}] \in \cB_p(\cH) 
 \, \text{ for all $z\in \bbC\backslash [\varepsilon,\infty)$}     \no \\
& \text{$\big($equivalently, $\big(\hatt S_K - zI_{\hatt\cH}\big)^{-1} \in \cB_p\big(\hatt\cH\big)\big)$ for all $z\in \bbC\backslash [\varepsilon,\infty)$$\big)$.}   \no
\end{align}
In fact, the $\ell^p(\bbN)$-based trace ideal $\cB_p(\cH)$ of $\cB(\cH)$ can be replaced by any 
two-sided symmetrically normed ideal of $\cB(\cH)$. \\[1mm]
$(ii)$ If $S$ has a self-adjoint extension $\wti S$ with $0 \in \rho\big(\wti S\big)$, then 
\begin{equation}
 \big(\wti S\big)^{-1} \in \cB_p(\cH) \, \text{ implies } \,\big(\hatt S_K\big)^{-1} \in \cB_p\big(\hatt \cH\big).   \lb{4.2}
\end{equation}
\end{theorem}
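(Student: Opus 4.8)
The plan is to derive both parts from the Krein-type identities already in hand, above all Theorem~\ref{t3.13} (and its special case \eqref{2.FK}), supplemented only by elementary first-resolvent-identity manipulations that transport trace-ideal membership from one point of a resolvent set to all others.

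I would dispose of part~$(ii)$ first, since it is essentially a one-line consequence of Theorem~\ref{t3.13}. Under Hypothesis~\ref{h2.7}, if $\wti S$ is a self-adjoint extension of $S$ with $0\in\rho\big(\wti S\big)$, then \eqref{3.28a} reads $\big(\hatt S_K\big)^{-1}=[I_{\cH}-P_{\ker(S_K)}]\big(\wti S\big)^{-1}[I_{\cH}-P_{\ker(S_K)}]$ in $\hatt\cH$. Since $\cB_p(\cH)$ is a two-sided ideal in $\cB(\cH)$ for every $p\in(0,\infty]$ and $[I_{\cH}-P_{\ker(S_K)}]$ is a bounded (orthogonal) projection, the right-hand side lies in $\cB_p(\cH)$ as soon as $\big(\wti S\big)^{-1}\in\cB_p(\cH)$; viewing $\big(\hatt S_K\big)^{-1}$ as that operator restricted to (equivalently, extended by $0$ off) $\hatt\cH$ gives $\big(\hatt S_K\big)^{-1}\in\cB_p\big(\hatt\cH\big)$, i.e.\ \eqref{4.2}. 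The identical argument works with $\cB_p(\cH)$ replaced by any two-sided (symmetrically normed) ideal, since only the ideal property is used.

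For part~$(i)$ I would argue in three short moves. First, $S_F\geq\varepsilon I_{\cH}$ (Theorem~\ref{t2.8}), so $0\in\bbC\backslash[\varepsilon,\infty)\subseteq\rho(S_F)$, and the first resolvent identity $(S_F)^{-1}=[I_{\cH}-z_0(S_F)^{-1}](S_F-z_0 I_{\cH})^{-1}$, with the bracketed factor bounded, shows that $(S_F-z_0 I_{\cH})^{-1}\in\cB_p(\cH)$ for one $z_0\in\bbC\backslash[\varepsilon,\infty)$ forces $(S_F)^{-1}\in\cB_p(\cH)$. Second, \eqref{2.FK} together with the ideal property (exactly as in part~$(ii)$) then yields $\big(\hatt S_K\big)^{-1}\in\cB_p\big(\hatt\cH\big)$. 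Third, to reach all $z\in\bbC\backslash[\varepsilon,\infty)$ I invoke Theorem~\ref{t2.11}: the bound $\varepsilon\leq\mu_{\hatt S_K,j}$ gives $\sigma\big(\hatt S_K\big)\subseteq[\varepsilon,\infty)$, hence $\bbC\backslash[\varepsilon,\infty)\subseteq\rho\big(\hatt S_K\big)$, and $\big(\hatt S_K-z I_{\hatt\cH}\big)^{-1}=\big[I_{\hatt\cH}+z\big(\hatt S_K-z I_{\hatt\cH}\big)^{-1}\big]\big(\hatt S_K\big)^{-1}$ propagates $\cB_p$-membership from $z=0$ to every such $z$. The equivalence with the $S_K$-formulation is then immediate from the orthogonal decomposition $\cH=\ker(S_K)\oplus\hatt\cH$, with respect to which $S_K=0\oplus\hatt S_K$ is reduced: for $z\in\bbC\backslash[\varepsilon,\infty)$ the operator $(S_K-z I_{\cH})^{-1}[I_{\cH}-P_{\ker(S_K)}]$ acts as $0$ on $\ker(S_K)$ and as $\big(\hatt S_K-z I_{\hatt\cH}\big)^{-1}$ on $\hatt\cH$ (at $z=0$ read directly as $\big(\hatt S_K\big)^{-1}$ extended by $0$, consistent with \eqref{2.FK}), so membership of one in its trace ideal is equivalent to membership of the other.

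\emph{Main obstacle.} There is no genuine analytic difficulty: the substantive content is packaged in Theorem~\ref{t3.13}, which realizes $\big(\hatt S_K\big)^{-1}$ as a compression of the resolvent of an arbitrary boundedly invertible self-adjoint extension. The only points demanding a little care are the bookkeeping that moves $\cB_p$-membership between spectral parameters—handled by the first resolvent identity together with the spectral bound of Theorem~\ref{t2.11}, which is exactly what upgrades $\bbC\backslash[0,\infty)$ in \eqref{COMPK} to $\bbC\backslash[\varepsilon,\infty)$ here—and the clean identification of the two equivalent conclusions in part~$(i)$ through the reducing decomposition of $S_K$; both are routine. The final remark, that $\cB_p(\cH)$ may be replaced by any two-sided symmetrically normed ideal, needs only the observation that nothing beyond the two-sided ideal property is invoked.
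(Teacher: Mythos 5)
Your proof is correct, and for part $(ii)$ it coincides with the paper's own argument (``\eqref{4.2} is clear from \eqref{3.28a}''). For part $(i)$, however, you take a genuinely different route from the proof the paper actually writes out. The paper disposes of the $\cB_p$ case in one line via the eigenvalue inequalities \eqref{Barr-5}, notes in passing that \eqref{4.1} ``also immediately follows from \eqref{2.FK}'', and then devotes the bulk of the proof to an ``elementary alternative'' for general symmetrically normed ideals: it starts from Krein's resolvent formula \eqref{3.1}, factors out $z$ from $[I_{\cH}-P_{\ker(S^*)}]U_{S_F,z,\pm i}P_{\cN_+}$ via Neumann series, controls $z\big[M_{S_F,\cN_+}^{D}(0)-M_{S_F,\cN_+}^{D}(z)\big]^{-1}$ near $z=0$ through \eqref{4.9}--\eqref{4.10}, and finishes by analytic continuation through the (removable) singularity at $z=0$. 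Your argument is the one-line \eqref{2.FK} remark fleshed out: transfer $\cB_p$-membership from $(S_F-z_0I_{\cH})^{-1}$ to $(S_F)^{-1}$ by a resolvent identity, compress via \eqref{2.FK} to get $\big(\hatt S_K\big)^{-1}\in\cB_p\big(\hatt\cH\big)$, and then propagate to all $z\in\bbC\backslash[\varepsilon,\infty)$ using $\sigma\big(\hatt S_K\big)\subseteq[\varepsilon,\infty)$ (from Theorem \ref{t2.11}) and another resolvent identity; since only the two-sided ideal property enters, this already covers arbitrary symmetrically normed ideals, a point worth making explicit because the paper presents the long computation as the argument that achieves that generality. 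What your route buys is brevity and the fact that working with $\hatt S_K$ (for which $0$ is in the resolvent set) sidesteps the Laurent-expansion analysis at $z=0$ entirely; what the paper's route buys is contact with the Donoghue-type $M$-operator machinery of Section \ref{s3} and the explicit reduced-resolvent expansion \eqref{4.11} recorded in Remark \ref{r4.2} (the authors say as much after that remark, observing that \eqref{2.FK} ``can directly be exploited'', which is precisely what you do).
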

\begin{proof} 
For the $\ell^p$-based trace ideals $\cB_p(\cH)$ of $\cB(\cH)$ the assertion \eqref{4.1} is an immediate consequence of inequality \eqref{Barr-5}. \\[1mm]
Assertion \eqref{4.1} also immediately follows from \eqref{2.FK}. \\[1mm]
Similarly, \eqref{4.2} is clear from \eqref{3.28a}. \\[1mm] 
Next, we provide an elementary alternative proof that applies to all two-sided symmetrically normed ideals of $\cB(\cH)$.
  
We suppose that $z \in \rho(S_K)\cap\rho(S_F)$ in this proof until \eqref{4.8}.  
Multiplying \eqref{3.1} by $[I_{\cH} - P_{\ker(S_K)}]^2=[I_{\cH} - P_{\ker(S_K)}]$, 
and using that $[I_{\cH} - P_{\ker(S_K)}]$ and $(S_K-z I_{\cH})^{-1}$ commute, one obtains 
\begin{align}
& (S_K-z I_{\cH})^{-1}[I_{\cH} - P_{\ker(S_K)}] 
= [I_{\cH} - P_{\ker(S_K)}](S_F-z I_{\cH})^{-1}[I_{\cH} - P_{\ker(S_K)}] \no \\
& \qquad + [I_{\cH} - P_{\ker(S_K)}](S_F-i I_{\cH})(S_F-z I_{\cH})^{-1} P_{\cN_+}
\big[M_{S_F,\cN_+}^{D}(0) - M_{S_F,\cN_+}^{D}(z)\big]^{-1}  \no   \\
&\qquad \quad 
\times P_{\cN_+}(S_F+i I_{\cH})(S_F-z I_{\cH})^{-1}[I_{\cH} - P_{\ker(S_K)}] \no \\
& \quad = [I_{\cH} - P_{\ker(S^*)}](S_F-z I_{\cH})^{-1}[I_{\cH} - P_{\ker(S^*)}] \no \\
& \qquad + [I_{\cH} - P_{\ker(S^*)}](S_F-i I_{\cH})(S_F-z I_{\cH})^{-1} P_{\cN_+}
\big[M_{S_F,\cN_+}^{D}(0) - M_{S_F,\cN_+}^{D}(z)\big]^{-1}  \no   \\
&\qquad \quad 
\times P_{\cN_+}(S_F+i I_{\cH})(S_F-z I_{\cH})^{-1}[I_{\cH} - P_{\ker(S^*)}]  \no \\
& \quad = [I_{\cH} - P_{\ker(S^*)}](S_F-z I_{\cH})^{-1}[I_{\cH} - P_{\ker(S^*)}]  \no \\
& \qquad + [I_{\cH} - P_{\ker(S^*)}]U_{S_F,z,i} P_{\cN_+}
\big[M_{S_F,\cN_+}^{D}(0) - M_{S_F,\cN_+}^{D}(z)\big]^{-1}   \lb{4.3} \\
& \qquad \quad \times P_{\cN_+}U_{S_F,z,-i}[I_{\cH} - P_{\ker(S^*)}],    \no  
\end{align}
since $\ker(S_K)=\ker(S^*)$ by \eqref{Fr-4Tf}. Here we used the notation (cf.\ \eqref{e1})
\begin{equation}
U_{S_F,z,i} = (S_F-i I_{\cH})(S_F-z I_{\cH})^{-1}, \quad 
U_{S_F,z,-i} = (S_F+i I_{\cH})(S_F-z I_{\cH})^{-1},     \lb{4.4}
\end{equation}
and $\cN_+ = \ker(S^* - i I_{\cH})$ in \eqref{4.3}. 

Next, one computes for $0 \neq |z|$ sufficiently small,  
\begin{align}
&  [I_{\cH} - P_{\ker(S^*)}]U_{S_F,z,i} P_{\cN_+} 
 = [I_{\cH} - P_{\ker(S^*)}] \big[I_{\cH} + (z-i)(S_F-z I_{\cH})^{-1}\big] P_{\cN_+}  
 \no \\
& \quad = [I_{\cH} - P_{\ker(S^*)}] 
\Big[I_{\cH} + (z-i)\big[I_{\cH} -z(S_F)^{-1}\big]^{-1}(S_F)^{-1}\Big] 
P_{\cN_+}  \no \\
& \quad = [I_{\cH} - P_{\ker(S^*)}] \bigg[\big[I_{\cH} - i (S_F)^{-1}\big] + i (S_F)^{-1}    \no \\
& \hspace*{3.2cm} + (z-i) \bigg(\sum_{n=0}^\infty z^n (S_F)^{-n}\bigg) (S_F)^{-1}\bigg] P_{\cN_+}  \no \\
& \quad = z [I_{\cH} - P_{\ker(S^*)}] \big[I_{\cH} - i (S_F)^{-1}\big]  
\bigg(\sum_{n=0}^\infty z^n (S_F)^{-n}\bigg) (S_F)^{-1} P_{\cN_+},   \lb{4.5} 
\end{align}
where we used 
\begin{equation}
U_{S_F,0,i} P_{\cN_+} = \big(I_{\cH} - i (S_F)^{-1}\big) P_{\cN_+}   
= P_{\ker(S^*)} \big(I_{\cH} - i (S_F)^{-1}\big) P_{\cN_+},       \lb{4.6} 
\end{equation}
as a consequence of $U_{S_F,0,i} \ker(S^* - i I_{\cH}) = \ker(S^*)$ (cf.\ \eqref{e3}). 
Similarly, one gets  
\begin{align}
& P_{\cN_+}U_{S_F,z,-i}[I_{\cH} - P_{\ker(S^*)}] 
= \big[[I_{\cH} - P_{\ker(S^*)}](U_{S_F,z,-i})^* P_{\cN_+}\big]^*  \no \\
& \quad = \big[[I_{\cH} - P_{\ker(S^*)}] U_{S_F,{\ol z},i}) P_{\cN_+}\big]^*  \no \\
& \quad = z P_{\cN_+} (S_F)^{-1} \bigg(\sum_{n=0}^\infty z^n (S_F)^{-n}\bigg)
\big[I_{\cH} + i (S_F)^{-1}\big] [I_{\cH} - P_{\ker(S^*)}].     \lb{4.7}
\end{align}

Combining \eqref{4.3}, \eqref{4.5}, and \eqref{4.7} one obtains for $0 \neq |z|$ sufficiently small,  
\begin{align}
& (S_K-z I_{\cH})^{-1}[I_{\cH} - P_{\ker(S_K)}]    
= [I_{\cH} - P_{\ker(S^*)}](S_F-z I_{\cH})^{-1}[I_{\cH} - P_{\ker(S^*)}]  \no \\
& \quad + [I_{\cH} - P_{\ker(S^*)}] \big[I_{\cH} - i (S_F)^{-1}\big]  
\bigg(\sum_{n=0}^\infty z^n (S_F)^{-n}\bigg) (S_F)^{-1} P_{\cN_+}   \no \\
& \qquad \times z^2 \big[M_{S_F,\cN_+}^{D}(0) - M_{S_F,\cN_+}^{D}(z)\big]^{-1}   \lb{4.8} \\
& \qquad \times P_{\cN_+} (S_F)^{-1} \bigg(\sum_{n=0}^\infty z^n (S_F)^{-n}\bigg)
\big[I_{\cH} + i (S_F)^{-1}\big] [I_{\cH} - P_{\ker(S^*)}] \in \cB_p(\cH),      \no 
\end{align}
since by hypothesis $(S_F-z I_{\cH})^{-1}, \, (S_F)^{-1} \in \cB_p(\cH)$ for some  
$p\in (0,\infty]$. Analytic continuation using the resolvent equation for $S_K$ then yields \eqref{4.1} for all $z\in \bbC\backslash [0,\infty)$. Next, noting that \eqref{4.4}  
\big(with $\cN = \cN_+$ and $\wti S = S_F$\big) and $(S_F)^{-1}\in\cB(\cH)$ imply  
\begin{equation}
\f{d}{dz}M_{S_F,\cN_+}^{D}(z)\bigg|_{z=0} = I_{\cN_+} 
+ P_{\cN_+}(S_F)^{-2} P_{\cN_+} \big|_{\cN_+} \geq  I_{\cN_+},     \lb{4.9}
\end{equation}
one obtains 
\begin{align}
& - z \big[M_{S_F,\cN_+}^{D}(0) - M_{S_F,\cN_+}^{D}(z)\big]^{-1} 
= \bigg[\bigg(\f{d}{dz}M_{S_F,\cN_+}^{D}(z)\bigg|_{z=0}\bigg)^{-1} + \Oh(z)\bigg]
 \in \cB(\cN_+)   \no \\
& \hspace*{6.7cm}  \text{ for $0 \neq |z|$ sufficiently small.}    \lb{4.10}
\end{align}
(Here the symbols $\Oh(1)$, $\Oh(z)$, etc., represent bounded operators in $\cH$ 
or $\cN_+$ whose norms are of order $\Oh(1)$, $\Oh(z)$, etc., as $z\to 0$.)
Thus, the right-hand side of \eqref{4.8} has an analytic 
continuation to $z=0$ and hence so does the left-hand side (cf.\ also Remark \ref{r4.2} below). 
This is of course related to the concept of the reduced resolvent of $S_K$ for the eigenvalue 
$0$ (cf.\ \cite[Sect.\ III.6.5]{Ka80}).  

Finally, an additional analytic 
continuation yields \eqref{4.1} for all $z\in \bbC\backslash [\varepsilon,\infty)$. 

Clearly the same proof works with $\cB_p(\cH)$ replaced by any symmetrically normed ideal 
of $\cB(\cH)$.
\end{proof}

\begin{remark} \lb{r4.2}
It is illustrative to complete the computation of $(S_K-z I_{\cH})^{-1}[I_{\cH} - P_{\ker(S_K)}]$ for 
$0 \neq |z|$ sufficiently small. Continuing \eqref{4.8} and \eqref{4.10} one obtains 
for $0 \neq |z|$ sufficiently small, 
\begin{align}
& (S_K-z I_{\cH})^{-1}[I_{\cH} - P_{\ker(S_K)}] 
= [I_{\cH} - P_{\ker(S_K)}](S_F-z I_{\cH})^{-1}[I_{\cH} - P_{\ker(S_K)}] \no \\
& \qquad - z [I_{\cH} - P_{\ker(S^*)}] \big[I_{\cH} - i (S_F)^{-1}\big]  
\bigg(\sum_{n=0}^\infty z^n (S_F)^{-n}\bigg) (S_F)^{-1} P_{\cN_+}  \no \\
& \qquad \quad \times 
\bigg(\f{d}{dz}M_{S_F,\cN_+}^{D}(z)\bigg|_{z=0}\bigg)^{-1} [I_{\cN_+} + \Oh(z)]  \no \\
& \qquad \quad \times  
P_{\cN_+} (S_F)^{-1} \bigg(\sum_{n=0}^\infty z^n (S_F)^{-n}\bigg)
\big[I_{\cH} + i (S_F)^{-1}\big] [I_{\cH} - P_{\ker(S^*)}]    \no \\
& \quad = [I_{\cH} - P_{\ker(S_K)}](S_F)^{-1}[I_{\cH} + \Oh(z)]
[I_{\cH} - P_{\ker(S_K)}]   \no \\
& \qquad - z [I_{\cH} - P_{\ker(S^*)}] \big[I_{\cH} - i (S_F)^{-1}\big]  
[I_{\cH} + \Oh(z)] (S_F)^{-1} P_{\cN_+}  \no \\
& \qquad \quad \times 
\bigg(\f{d}{dz}M_{S_F,\cN_+}^{D}(z)\bigg|_{z=0}\bigg)^{-1} [I_{\cN_+} + \Oh(z)]  \no \\
& \qquad \quad \times 
P_{\cN_+} (S_F)^{-1} [I_{\cH} + \Oh(z)]
\big[I_{\cH} + i (S_F)^{-1}\big] [I_{\cH} - P_{\ker(S^*)}].  \lb{4.11}
\end{align}
One notes that as $z\to 0$, \eqref{4.11} yields once more relation \eqref{2.FK}. 
\hfill $\diamond$
\end{remark} 

We reproduced the elementary but somewhat long argument in part $(iv)$ of the proof of Theorem \ref{t4.1} since it makes close contact with the material discussed in Section \ref{s3}. However, \eqref{2.FK} is a powerful identity that can directly be exploited to prove the following extension, and, in a sense, completion of Theorem \ref{t4.1}:

\begin{theorem} \lb{t4.3}
Assume Hypothesis \ref{h2.7} and let $p\in (0,\infty]$. Recalling that 
\begin{equation}
\hatt \cH = [\ker (S^*)]^{\bot} = [I_{\cH} - P_{\ker(S^*)}] \cH 
= [I_{\cH} - P_{\ker(S_K)}] \cH = [\ker (S_K)]^{\bot},    \lb{4.12}
\end{equation}
the following items $(i)$--$(iv)$ are equivalent: \\[1mm]
$(i)$ \hspace*{0.5mm} $\big(\hatt S_K\big)^{-1} \in \cB_p\big(\hatt \cH\big)$. \\[1mm] 
$(ii)$ \hspace*{0.001mm} $[I_{\cH} - P_{\ker(S_K)}] (S_F)^{-1} [I_{\cH} - P_{\ker(S_K)}] \in \cB_p(\cH)$. \\[1mm] 
$(iii)$ $[I_{\cH} - P_{\ker(S_K)}] (S_F)^{-1/2} \in \cB_{2p}(\cH)$. \\[1mm] 
$(iv)$ \hspace*{0.00001mm}  $(S_F)^{-1/2} [I_{\cH} - P_{\ker(S_K)}] \in \cB_{2p}(\cH)$. \\[1mm] 
In particular, 
\begin{equation}
S_F^{-1} \in \cB_p\big(\cH\big) \, \text{ implies } \, \big(\hatt S_K\big)^{-1} \in \cB_p\big(\hatt \cH\big).   \lb{4.13}
\end{equation}
\end{theorem}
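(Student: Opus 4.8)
The plan is to exploit the key identity \eqref{2.FK}, which states that $\big(\hatt S_K\big)^{-1}$ equals $[I_{\cH} - P_{\ker(S_K)}] (S_F)^{-1} [I_{\cH} - P_{\ker(S_K)}]$ viewed as an operator on $\hatt\cH$, together with standard facts about Schatten ideals under multiplication by bounded operators and under polar-type factorizations. First I would establish the equivalence of $(i)$ and $(ii)$: this is essentially immediate from \eqref{2.FK}, since $\hatt S_K$ acts in $\hatt\cH$ and the right-hand side of \eqref{2.FK} is precisely the compression of $(S_F)^{-1}$ to $\hatt\cH$; the only thing to check is that membership in $\cB_p(\hatt\cH)$ and in $\cB_p(\cH)$ (after extending by zero on $\ker(S_K)$) are the same, which follows because $\hatt\cH$ is a reducing subspace for the operator in $(ii)$ and the Schatten norm is unaffected by the trivial action on the complementary subspace.

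Next I would handle the equivalence of $(ii)$, $(iii)$, and $(iv)$. Write $P = [I_{\cH} - P_{\ker(S_K)}]$ for brevity and set $R = (S_F)^{-1/2}$, which is a bounded, nonnegative, self-adjoint operator since $S_F \geq \varepsilon I_{\cH}$. Then $P (S_F)^{-1} P = P R^2 P = (RP)^*(RP)$, so $P(S_F)^{-1}P = |RP|^2$ where $RP = (S_F)^{-1/2} P$. The identity $\cB_p$-membership of $A^* A$ is equivalent to $\cB_{2p}$-membership of $A$ (equivalently of $|A|$), which is one of the standard Schatten-ideal facts — for $p \in (0,\infty]$ this is the relation $\|A^*A\|_{\cB_p} = \|A\|_{\cB_{2p}}^2$, valid in particular for $p = \infty$ in the compact-operator sense. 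Applying this with $A = RP = (S_F)^{-1/2} P$ gives the equivalence of $(ii)$ and $(iv)$. The equivalence of $(iii)$ and $(iv)$ then follows by taking adjoints, since $\cB_{2p}$ is a $*$-ideal and $\big((S_F)^{-1/2} P\big)^* = P (S_F)^{-1/2}$ (both $P$ and $(S_F)^{-1/2}$ being self-adjoint).

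Finally, for the displayed consequence \eqref{4.13}: if $S_F^{-1} \in \cB_p(\cH)$, then since $P$ is a bounded (in fact contractive) operator and $\cB_p(\cH)$ is a two-sided ideal in $\cB(\cH)$, we get $P (S_F)^{-1} P \in \cB_p(\cH)$, which is exactly $(ii)$; hence $(i)$ holds, i.e.\ $\big(\hatt S_K\big)^{-1} \in \cB_p(\hatt\cH)$. This reproves \eqref{1.5} and \eqref{4.2} in a transparent way.

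I expect the only real subtlety — rather than a genuine obstacle — to be bookkeeping about the two Hilbert spaces $\cH$ and $\hatt\cH$: one must be careful that the operator $P (S_F)^{-1} P$ on $\cH$ and the operator $\big(\hatt S_K\big)^{-1}$ on $\hatt\cH$ are being compared in a way that respects the orthogonal decomposition $\cH = \ker(S^*) \oplus \hatt\cH$ of \eqref{4.12}, and that the equality of Schatten norms across this decomposition is used correctly. A secondary point to state cleanly is the Schatten fact $\|A^*A\|_{\cB_p} = \|A\|_{\cB_{2p}}^2$ and its validity for all $p \in (0,\infty]$, including the endpoint $p = \infty$ interpreted as compactness; I would simply cite \cite[Chs.~1, 2]{Si05} for this rather than prove it. Everything else is a direct application of \eqref{2.FK} and elementary ideal properties.
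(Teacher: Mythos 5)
Your proposal is correct and follows essentially the same route as the paper: the equivalence of $(i)$ and $(ii)$ from \eqref{2.FK}, the factorization $[I_{\cH}-P_{\ker(S_K)}](S_F)^{-1}[I_{\cH}-P_{\ker(S_K)}]=A^*A$ with $A=(S_F)^{-1/2}[I_{\cH}-P_{\ker(S_K)}]$ combined with the standard facts $T^*T\in\cB_p(\cH)\Leftrightarrow T\in\cB_{2p}(\cH)$ and $T\in\cB_{2p}(\cH)\Leftrightarrow T^*\in\cB_{2p}(\cH)$, and the two-sided ideal property for \eqref{4.13}. The only difference is that you spell out the bookkeeping between $\cB_p(\hatt\cH)$ and $\cB_p(\cH)$, which the paper leaves implicit.
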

\begin{proof}
The equivalence of items $(i)$ and $(ii)$ is clear from equality \eqref{2.FK}; the remaining equivalences with items $(iii)$ and $(iv)$ are a consequence of the well-known fact that for any $T \in \cB(\cH)$ one has for $p\in (0,\infty]$, 
\begin{align}
\begin{split} 
& T \in \cB_{2p}(\cH) \, \text{ if and only if } \, T^*T \in \cB_p(\cH),     \\
& T \in \cB_{2p}(\cH) \, \text{ if and only if } \, T^* \in \cB_{2p}(\cH).     \lb{4.14} 
\end{split} 
\end{align} 
\end{proof}

The trace ideal assertions in Theorem \ref{t4.3} extend to $(S_F)^{-1}, \big(\hatt S_K\big)^{-1}$ replaced by $(S_F - zI_{\cH})^{-1}, \big(\hatt S_K - z I_{\cH}\big)^{-1}$, $ z \in \bbC \backslash [\varepsilon, \infty)$, upon employing resolvent identities.

\begin{remark}\lb{r4.4}
The converse implication to \eqref{4.13} in Theorem \ref{t4.3} is false. This fact has recently been shown by M. Malamud \cite{Ma23} as follows: He proved that\footnote{In this context we recall that if $z \in \gamma(S)$, where $\gamma(\dott)$ denotes the field of regularity, then $\ran(S -  z I_{\cH})$ is a closed linear subspace of $\cH$ (see, e.g., \cite[Prop.~2.1\,$(iv)$]{Sc12}).} 
\begin{align}
&  P_{\ran(S+I_{\cH})} (S + I_{\cH})^{-1} \in \cB_{\infty}(\ran(S+I_{\cH}))     \no \\ 
& \quad \text{is equivalent to }     \lb{4.15} \\
& \big(\hatt S_K - z_0 I_{\hatt \cH}\big)^{-1} \in \cB_{\infty}\big(\hatt\cH\big), \quad z_0 \in \rho\big(\hatt S_K\big), \no
\end{align}
and then combined \eqref{4.15} with the fact that 
\begin{equation}
(S + I_{\cH})^{-1} \in \cB_{\infty}(\ran(S),\cH) \, \text{ does not imply } \, (S_F + I_{\cH})^{-1} \in \cB_{\infty}(\cH), 
\lb{4.16} 
\end{equation} 
a result proven earlier by S.\ Hassi, M.\ Malamud, and H.\ de Snoo \cite[Subsect.~4.8]{HMD04} (see also \cite{Ma23a}, \cite{Ma24} for concrete examples in two and three dimensions involving self-adjoint extensions of the Laplacian). These facts extend from $\cB_{\infty} (\dott)$ to $\cB_p (\dott)$, $p \in (0,\infty)$ (cf., \cite{Ma23} and \cite{Ma24}).  

In particular, recalling the decomposition \eqref{2.50} of $\cH$,  $(S_F)^{-1}$ decomposes accordingly into the $2 \times 2$ block operator matrix 
\begin{align}
& (S_F)^{-1}      \lb{4.18} \\
& \quad = \begin{pmatrix} P_{\ker(S_K)} (S_F)^{-1} P_{\ker(S_K)} & 
P_{\ker(S_K)} (S_F)^{-1} [I_{\cH} - P_{\ker(S_K)}] \\ 
[I_{\cH} - P_{\ker(S_K)}] (S_F)^{-1} P_{\ker(S_K)} & 
[I_{\cH} - P_{\ker(S_K)}] (S_F)^{-1} [I_{\cH} - P_{\ker(S_K)}] 
\end{pmatrix}.    \no
\end{align}
Thus, assuming Hypothesis \ref{h2.7} and $\big(\hatt S_K\big)^{-1} \in \cB_p\big(\hatt \cH\big)$ as in Theorem \ref{t4.3}\,$(i)$, three of the four entries in the $2 \times 2$ 
block operator matrix \eqref{4.18} lie in $\cB_p(\cH)$, the lone possible exception being the $(1,1)$-entry, 
$P_{\ker(S_K)} (S_F)^{-1} P_{\ker(S_K)}$. The hypothesis 
\begin{align} 
& \big(\hatt S_K\big)^{-1} \in \cB_p\big(\hatt \cH\big),    \no \\ 
& \quad \text{equivalently, } \\ 
& [I_{\cH} - P_{\ker(S_K)}] (S_K)^{-1} [I_{\cH} - P_{\ker(S_K)}] \in \cB_p(\cH); \quad p \in (0,\infty],    \no 
\end{align} 
in general, cannot make any assertions regarding the trace ideal properties of $P_{\ker(S_K)} (S_F)^{-1} P_{\ker(S_K)}$. Again, we refer to \cite{HMD04}, \cite{Ma92a}, \cite{Ma23}, \cite{Ma23a}, and \cite{Ma24} for additional details.   
${}$ \hfill $\diamond$ 
\end{remark} 

\medskip

\noindent
{\bf Acknowledgments.} 
We gratefully acknowledge very helpful discussions with Jussi Behrndt. F.G.\ is indebted to Tuncay Aktosun, Sergei Avdonin, Ricardo Weder, and Vjacheslav Yurko for their kind invitation to speak at the online conference AMP 2024 and to contribute to these conference proceedings. S.S.\ was supported in part by the NSF Grants  DMS-2418900 and DMS-2510344, by the Simons Foundation grant MP-TSM-00002897, by the Office of the Vice President for Research \& Economic Development (OVPRED) at Auburn University through the Research Support Program Grant, and by grant 2024154 from the United States -- Israel Binational Science Foundation (BSF), Jerusalem, Israel.


\end{document}